\theoremstyle{plain}
\newtheorem{theorem}{Theorem}[section]
\newtheorem{corollary}[theorem]{Corollary}
\newtheorem{lemma}[theorem]{Lemma}
\newtheorem{proposition}[theorem]{Proposition}
\theoremstyle{definition}
\newtheorem{definition}[theorem]{Definition}
\theoremstyle{remark}
\newtheorem{remark}[theorem]{Remark}
\newcommand{\bve}{\bar \varepsilon}
\newcommand{\supp}{\text{supp}}
\newcommand{\onto}{\twoheadrightarrow}
\newcommand{\into}{\hookrightarrow}
\newcommand{\ms}{\mathscr}
\newcommand{\Z}{\mathbb{Z}}
\newcommand{\R}{\mathbb{R}}
\newcommand{\N}{\mathbb{N}}
\newcommand{\bd}{\partial}
\newcommand{\pf}{\pitchfork}
\renewcommand{\H}{\mathbb H}
\newcommand{\mc}[1]{\mathcal{#1}}
\newcommand{\dlim}{\varinjlim}
\newcommand{\mf}{\mathfrak}
\newcommand{\im}{\text{im}}
\newcommand{\cok}{\text{cok}}
\newcommand{\Mor}{\text{Mor}}
\begin{document}

\title{On the chain-level intersection pairing for PL pseudomanifolds}
\author{Greg Friedman}
\date{May 30, 2008}

\maketitle

\tableofcontents

\section{Introduction}

For a compact oriented PL manifold, $M$, the intersection pairing on chain complexes, which induces the intersection pairing algebra on $H_*(M)$, dates back to Lefschetz \cite{Lef}. However, Lefschetz's pairing does not provide an algebra structure on $C_*(M)$, itself, as two chains may only be intersected if they are in general position. This difficulty does not descend to the homology groups since any pair of cycles are homologous to cycles in general position, and the resulting intersection product turns out to be independent of the choices made while putting chains into general position. This approach to pairings and duality was supplanted eventually in manifold theory by the more versatile cup product algebra, but it  gained new relevance with the work of Goresky and MacPherson on intersection homology on PL pseudomanifolds (a class of spaces including complex varieties) in \cite{GM1} and is also related to the work
of Chas and Sullivan on string topology \cite{ChS} (see \cite{McC2}).

Nearly 80 years after Lefschetz, James McClure \cite{McC} has shown that the domain for the intersection pairing on $C_*(M)$ is in fact a \emph{full} subcomplex of $C_*(M)\otimes C_*(M)$. In other words, the subcomplex $G_2\subset C_*(M)\otimes C_*(M)$ on which the intersection product of chains \emph{is} well-defined is quasi-isomorphic to $C_*(M)\otimes C_*(M)$.\footnote{In order to be completely correct, this statement should incorporate some indexing shifts, which we leave out here in order not to clutter the introduction with too many technicalities; see  Section \ref{S: GP} for the correct statements. } In fact, McClure goes further to show that $C_*(M)$, together with the chain intersection pairing, has the structure of a \emph{partially-defined commutative DGA} and is thus quasi-isomorphic to an $E_{\infty}$ chain algebra. McClure's goal in doing so was to develop tools to study the Chas-Sullivan operations in string topology. 

With different purposes in mind, our first goal in this paper is to generalize McClure's result to the intersection pairing of intersection chain complexes on PL pseudomanifolds. In other words, we prove that  the domain $G_2$ of definition of the intersection pairing on an oriented PL pseudomanifold $X$ is a full subcomplex of $C_*(X)\otimes C_*(X)$, and, as a corollary, that the domain of the Goresky-MacPherson intersection pairing from $I^{\bar p}C_*(X)\otimes I^{\bar q}C_*(X)$ to $C_*(X)$ (or to $I^{\bar r}C_*(X)$, when $\bar r\geq \bar p+\bar q$) is a full subcomplex of $I^{\bar p}C_*(X)\otimes I^{\bar q}C_*(X)$. We then go on to show that the intersection pairing of intersection chains on a PL pseudomanifold possesses the structure of a \emph{partial restricted algebra}, in a sense to be made precise below.
 Although the first part of  this may seem to be a straightforward generalization of McClure's results (and we do, in fact, utilize McClure's superstructure), some of the details of McClure's proof use arguments that  rely strongly on the manifold structure of the spaces involved (in particular, McClure's proof rests on being able to cover his manifolds by Euclidean balls and then working with general position arguments within these balls) and thus  fail to work on stratified spaces. So, we must turn to alternate arguments that employ a generalization of McCrory's results on stratified general position \cite{Mc78} instead. Since PL manifolds are special cases of PL pseudomanifolds, our arguments include an alternative proof of McClure's theorem. 

This program is carried out in Sections \ref{S: GP}, \ref{S: pairing}, and \ref{S: Leinster}, below. In the first of these sections, we are concerned principally with general position issues and showing
that the (appropriately shifted) chain complex $C_*(X)\otimes \cdots \otimes C_*(X)$ contains a quasi-isomorphic subcomplex $G_k$ consisting of chains 
in stratified general position and whose boundaries are in stratified general position. We note here the important fact (in a certain sense the essence of the whole matter) that such chains \emph{cannot} in general be written as sums $C=\sum C_{i_1}\otimes \cdots \otimes C_{i_k}$ in which each collection  $C_{i_1},\ldots, C_{i_k}$ is in stratified stratified general position and has its boundary in stratified general position. This is completely analogous to the fact noted in \cite{McC} that a cycle $C$ might not be expressible as a sum of cycles of this form.  In general, there will be important canceling of boundary terms.
See below for a more technical, and hence more accurate, description, culminating in the statement of Theorem \ref{T: qi}. Similarly, we find a quasi-isomorphic subcomplex of the (appropriately shifted) tensor product of intersection chain complexes $I^{\bar p_1}C_*(X)\otimes \cdots \otimes I^{\bar p_k}C_*(X)$ that satisfies the appropriate stratified general position requirements; see Theorem \ref{T: pqi}.

In Section \ref{S: pairing}, we define an intersection chain multi-product, patterned after McClure's (which in turn relies on earlier prescriptions by Dold and others), whose domain is the  subcomplex of the tensor product of intersection chains that is constructed in Section \ref{S: GP}. We then show that this product restricts to the iteration of the PL intersection product of Goresky and MacPherson \cite{GM1} in the special case of $k$-tuples of chains whose tensor product lies in the domain (in general, not all chains in the domain can be written as sums of chains of this form).

Section \ref{S: Leinster} is concerned with the partial restricted algebra structure possessed by the intersection chain complexes. We will describe this idea more fully in a moment. 

In Section \ref{S: sheaves} (which is independent of Section \ref{S: Leinster}), as a first application of this circle of ideas, we demonstrate  that the sheaf theoretic intersection homology product defined by Goresky-MacPherson in \cite{GM2} (see also \cite[Section V.9]{Bo}) using abstract properties of the derived category of sheaves is equal on PL pseudomanifolds to that defined in \cite{GM1} using the geometric intersection pairing. While this result generally seems to be well-believed in the literature, we have not been able to pinpoint a prior proof. In addition, this approach has the benefit of providing  a very concrete ``roof of maps''  in the category of complexes of sheaves on $X$ that serves as a realization of the pairing morphism in $\Mor_{D^b(X)}(\mc I^{\bar p}\mc C^*\overset{L}{\otimes} \mc I^{\bar q}\mc C^*, \mc I^{\bar r}\mc C^*)$.

\paragraph{A categorical structure.} The material of Section \ref{S: Leinster}  places our results on domains of geometric intersection pairings into more categorical terms. This framework is due initially to Jim McClure and was refined by Mark Hovey. We provide some heuristics and motivations here; more precise details can be found below in Section \ref{S: Leinster}.

 Fix a number $n$ and consider classical perversities for
dimension $n$, i.e. functions $\bar p:\{2,3,\ldots,n\}\to \Z^+$ such that $\bar p(2)=0$ and $\bar p(j)\leq \bar p(j+1)\leq \bar p(j)+1$. Define $\bar p\leq \bar q$ if $\bar p(j)\leq \bar q(j)$ for all $j$.  This makes the set of
perversities into a poset, which we denote by $\mf P$.

By a {\it perverse chain complex}, we  mean a functor from $\mf P$ to the category 
of chain complexes.   An example of a perverse chain complex is the 
collection of intersection chain complexes $\{I^{\star}C_*(Y)\}$ for an $n$-dimensional stratified 
space $Y$, where $\star$ indexes the poset of perversities.

One then expects to define restricted chain algebras
that encompass product maps of the form 
\[
D_*^{\bar p} \otimes D_*^{\bar q} \to D_*^{\bar r}
\]
that are defined when $\bar p+\bar q\leq\bar r$, are compatible with the boundary maps,
and satisfy evident naturality, associativity, commutativity, and unital axioms.
The term ``restricted'' refers to the fact that the product is only defined
for pairs of perversities with $\bar p+\bar q$ less than or equal to the top
perversity.  

To accomplish this precisely requires a more formal setting, which has been worked out by Hovey \cite{Ho08}. 
One defines a symmetric monoidal product on the
category of perverse chain complexes by letting
\[
\{D_*^{\star}\}\boxtimes \{E_*^{\star}\}
\]
be the perverse chain complex that in perversity $\bar r$ is
\[
\dlim_{\bar p+\bar q\leq \bar r} D_*^{\bar p} \otimes E_*^{\bar q}.
\]
Then a restricted chain algebra should be a commutative monoid in the resulting symmetric monoidal category.

An example of a restricted chain algebra is that induced on the collection of 
shifted intersection homology groups $\{S^{-n}I^{\star}H_*(Y)\}$ for a stratified space $Y$, 
considered as chain complexes with zero differential, with the product defined by (direct limits of ) the Goresky-MacPherson intersection 
product defined in \cite{GM1}.

On the other hand, the collection $\{S^{-n}I^{\star}C_*(Y)\}$ of shifted intersection chain
complexes is {\it not} a restricted chain algebra because the chain-level
intersection pairing is only defined for pairs of chains that are in general position.

Let us say that a subobject of a perverse chain complex is {\it dense} (or \emph{full}) if the
inclusion map is a quasi-isomorphism for each $\bar p$.

By a {\it partial restricted chain algebra}, we mean a perverse chain complex 
$\{D_*^{\star}\}$ together with, for each $k$, a product defined on a dense 
subobject of $\{D_*^{\star}\}^{\boxtimes k}$; these partially-defined products
are required to have properties that are similar to the definition of 
commutative homotopy algebras in \cite{Lei} (see also \cite[Section 9]{McC}). We will define these objects more carefully below in Section \ref{S: Leinster} under the title of \emph{Leinster partial restricted commutative DGAs}.

In this language, our main theorem can be stated as follows. A more detailed explanation of the meaning of this theorem can be found in Section \ref{S: Leinster}. 

\begin{theorem}\label{T: PRCA}
For any compact oriented PL stratified pseudomanifold $Y$, the partially-defined intersection pairing on the perverse chain complex $\{S^{-n}I^{\star}C_*(Y)\}$ extends to the structure of  a
Leinster partial restricted commutative DGA.
\end{theorem}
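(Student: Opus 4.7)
The plan is to assemble the partial restricted commutative DGA structure directly from the components already built in the preceding sections: the stratified-general-position subcomplexes of Theorem \ref{T: pqi} will supply the dense subobjects of $\{S^{-n}I^{\star}C_*(Y)\}^{\boxtimes k}$ on which the $k$-fold product lives, the intersection multi-product of Section \ref{S: pairing} will be the product, and then the Leinster axioms of Section \ref{S: Leinster} must be verified in turn.

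More precisely, for each $k$ and each target perversity $\bar r$, the dense subcomplex of the $\bar r$-component $\dlim_{\bar p_1+\cdots+\bar p_k\leq \bar r} S^{-n}I^{\bar p_1}C_*(Y)\otimes\cdots\otimes S^{-n}I^{\bar p_k}C_*(Y)$ that I would use is the image under the direct limit of the subcomplexes of $k$-tuples in stratified general position with boundaries in stratified general position. Theorem \ref{T: pqi} gives density at each stage of the direct system, and since filtered colimits of chain complexes are exact, the resulting subobject is dense in the perverse sense required by Hovey's framework. On this dense subobject, the product $\mu_k$ is defined by the intersection multi-product of Section \ref{S: pairing}, which by construction lands in $I^{\bar r}C_*(Y)$ and is a chain map; the latter relies on having boundaries in general position, which is precisely why the dense subobject is cut out as it is.

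Once the domains and products are in hand, the remaining Leinster axioms split into comparatively routine items and one deeper coherence check. Symmetric-group equivariance follows from graded commutativity of the geometric intersection after the shift $S^{-n}$; naturality in perversity is automatic since the structure maps $I^{\bar p}C_*\to I^{\bar p'}C_*$ are inclusions compatible with intersection; the unit is provided by the fundamental class $[Y]$, which intersects with any stratified-transverse chain by giving it back; and the compatibility with differentials is built into the choice of dense subobject.

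The main obstacle, as it was for McClure in the manifold case, is the coherence axiom relating $\mu_k$ to iterations of $\mu_j$ for $j<k$ on the overlap of their domains. The subtle point emphasized in the introduction is that chains in $G_k$ cannot in general be written as sums of pure tensors in stratified general position, so coherence cannot be checked on such tensors and extended by linearity. The strategy I would follow, paralleling \cite[Section 9]{McC} but using the stratified-general-position machinery of Section \ref{S: GP} in place of the Euclidean-ball reductions (which fail on stratified spaces), is to establish strict coherence on the pure-tensor subcomplex, where Section \ref{S: pairing} identifies $\mu_k$ with the iterated Goresky--MacPherson intersection product and associativity/commutativity hold on the nose, and then to propagate these identities to the full dense subobject using the chain-homotopy apparatus of Leinster's framework together with the density statement of Theorem \ref{T: pqi} for all relevant arities simultaneously. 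The perversity bookkeeping, which is the essentially new feature over McClure's argument, is controlled by the fact that the intersection product strictly lowers $\sum \bar p_i$ to $\bar r$ under the direct-limit transition maps, so the coherence identifications respect the restricted monoidal structure $\boxtimes$ without further modification.
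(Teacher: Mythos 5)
Your proposal has the right raw ingredients (the dense subcomplexes $G_k^P$ from Theorem \ref{T: pqi}, the multi-product $\mu_k$, direct limits over collections of perversities), but it misreads what a Leinster partial restricted commutative DGA actually is, and as a result the step you identify as the ``main obstacle'' is not the one that must be overcome, while the steps that do need proof are missing. In Leinster's framework the structure is a functor $A:\Phi\to\mathbf{PCh_n}$ together with \emph{strict} natural quasi-isomorphisms $\zeta_{k,l}:A_{k+l}\to A_k\boxtimes A_l$ going the ``wrong way''; the products are not separate data but are encoded in the functoriality, i.e.\ in the maps $R_*:A_k\to A_l$ induced by set maps $R:\bar k\to\bar l$ (with $\mu_k$ being $R_*$ for the unique $R:\bar k\to\bar 1$). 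There is no axiom comparing $\mu_k$ with iterates of $\mu_j$ that has to be ``propagated up to homotopy''; associativity, commutativity and the unit are strict commutative diagrams of the $\zeta$'s, and since in this construction the $\zeta_{k,l}$ are literal inclusions, those diagrams commute for formal tensor-product reasons. A homotopy-propagation argument would in any case only yield a homotopy-coherent structure, which is weaker than the strict structure being claimed; likewise, commutativity is the Koszul-signed permutation of tensor factors applied to an inclusion, not a consequence of graded commutativity of the geometric intersection, and the unit concerns $\mf G_0=\{\Z[0]\}$, not the fundamental class.

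The two substantive points your proposal omits are exactly the ones that carry the proof. First, for $\zeta_{k,l}$ to exist as an inclusion one needs $G_{k+l}^{P_1\amalg P_2}\subset G_k^{P_1}\otimes G_l^{P_2}$ (Proposition \ref{P: split}); this is nontrivial precisely because of the phenomenon you quote from the introduction --- elements of $G_{k+l}$ need not be sums of pure tensors in general position --- and it is proved by a cokernel/torsion-freeness argument in the style of \cite[Lemma 11.3]{McC}, not by reduction to pure tensors. Second, one must show $\mf G$ is a functor on all of $\Phi$, which requires defining $R_*$ for injections (tensoring with copies of the fundamental class, allowable for the zero perversity) and for surjections, where the transfer must be shown to land in $G_{k-1}^{\bar s,\bar p_3,\ldots,\bar p_k}$ for $\bar s$ the minimal perversity over $\bar p_1,\bar p_2$; Lemmas \ref{L: min perv} and \ref{L: perverse sum} are what keep the total perversity $\leq\bar r$ after the product. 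Your remark that the product ``strictly lowers $\sum\bar p_i$ to $\bar r$ under the transition maps'' gestures at this but is not a proof --- the issue is that $\bar p_1+\bar p_2$ need not itself be a perversity, so one must replace it by the minimal perversity over it and check that this still combines admissibly with the remaining $\bar p_i$. Finally, Hovey's strong associativity of $\boxtimes$ (a limit of tensor products of limits collapses to a single limit) is needed to pass from statements at fixed perversity collections to statements about $\boxtimes$; you invoke exactness of filtered colimits for density but not this, and without it the axioms cannot be assembled at the level of perverse chain complexes.
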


Note that this partial chain algebra structure does not violate Steenrod's obstructions to the commutative cochain problem, since those obstructions apply only to everywhere-defined algebraic structures, not to partial algebraic structures.

\paragraph{ Future applications.}

James McClure, Scott O. Wilson, and the author are currently pursuing a program to demonstrate that the algebraic structures discovered here are homeomorphism invariants (at least over the rationals) in the following sense: the partial restricted algebras that correspond to homeomorphic pseudomanifolds are related by a chain of homomorphisms of partial restricted algebras that are weak equivalences, meaning that they induce isomorphisms at the level of homology. This is a stronger statement than that which follows from Goresky-MacPherson \cite{GM2}, which assures us only that there is a  homeomorphism-invariant restricted algebra structure in the derived category. 

Furthermore, Wilson's paper \cite{Wilson} implies that, over the
rationals, partial commutative DGAs  can be 
rectified to ordinary commutative DGAs. We propose to prove the analogous statement for  partial restricted chain
algebras. This would provide a way of assigning to a PL pseudomanifold a rational restricted commutative  DGA that could be seen as an ``intersection'' analogue of Sullivan's rational polynomial de Rham complex of PL forms. Such DGAs should prove interesting objects of study, perhaps leading
to a theory of   \emph{intersection rational homotopy groups}, or  to a singular space version of the Deligne-Griffiths-Morgan-Sullivan theorem \cite{DGMS}. These invariants would be more refined than classical rational homotopy theory in the same sense that intersection homology groups provide more refined information than ordinary homology groups on spaces carrying the appropriate filtration structures. Conjecturally, these may be a rational version of the intersection homotopy groups of Gajer \cite{Ga96, Ga96c}.

Further results over other coefficient rings may be possible by employing $E_\infty$ structures.

\paragraph{Acknowledgment.} I thank Jim McClure and Scott Wilson for many helpful discussions, and Jim McClure especially for providing motivation and straightening out the sign issues. Mark Hovey was instrumental in working out the details of the category of perverse chain complexes. 

\paragraph{A note on changes from the original version of \cite{McC}.}

During the initial writing of this paper, in particular the sections concerning the comparison of the Goresky-MacPherson intersection product with the generalized intersection pairing defined in Section \ref{S: multiproduct}, below, it became clear that certain signs (powers of $-1$) were not working out quite right. This led to a re-examination by McClure of his pairings in the original version of \cite{McC} and the discovery that some changes were necessary in order both to conform to Koszul sign conventions and to obtain the appropriate associativity of his multi-products. These changes will be described in a forthcoming revision of \cite{McC} by McClure and are incorporated into this paper. 
We provide here a short list of the main modifications as a convenience to the reader already familiar with the original version of \cite{McC} who would like a quick overview of what is different here.
The reasoning behind these changes, as well as the relevant definitions, are provided more fully as these notions arise, below; some of the more technical computations are collected in Appendix A, both for ease of access for those interested only in the changes from the original version of \cite{McC} and to avoid cluttering the main text even further than necessary. The  correct signs are due to McClure.

\begin{enumerate}
\item Our Poincar\'e duality map incorporates a sign $x\to (-1)^{m|x|}x\cap \Gamma$, where $\Gamma$ is the fundamental class of an $m$-dimensional oriented (pseudo-)manifold, and $|x|$ is the degree of the cohomology class $x$. See Section \ref{S: signs}.

\item We replace McClure's original exterior product $\varepsilon: C_*(X)\otimes C_*(Y)\to C_*(X\times Y)$ with a product $\bar \varepsilon: S^{-n_1}C_*(X)\otimes S^{-n_2}C_*(Y)\to S^{-n_1-n_2}C_*(X\times Y)$. This map is defined to be $(-1)^{\dim(X)\dim(Y)}$ times the composition of the appropriate (signed!) chain isomorphism $S^{-n_1}C_*(X)\otimes S^{-n_2}C_*(Y)\cong S^{-n_1-n_2}(C_*(X)\otimes C_*(Y))$ with $S^{-n_1-n_2}\varepsilon$. See Section \ref{S: prelims}.

\item $G_k$ is redefined in the obvious way to incorporate the shifts of the chain complexes involved, and the proofs of Theorems \ref{T: qi} and \ref{T: pqi}, corresponding to McClure's Proposition 12.3, must be modified to take these into account. In particular, some new care must be taken with the homotopy and product arguments.

\end{enumerate}

These sign issues are discussed further in Section \ref{S: signs}, throughout the text as they arise, and also in Appendix A, in which we verify some of the resulting fixes.

\section{Background}

In this section, we recall some background definitions.

\paragraph{Pseudomanifolds.} Let $c(Z)$ denote the open cone on the space $Z$,
and let $c(\emptyset)$ be a point. 

A \emph{stratified paracompact Hausdorff space}
$Y$ (see \cite{GM2} or \cite{CS}) is defined
by a filtration
\begin{equation*}
Y=Y^n\supset Y^{n-1} \supset Y^{n-2}\supset \cdots \supset Y^0\supset Y^{-1}=\emptyset
\end{equation*}
such that for each point $y\in Y_i=Y^i-Y^{i-1}$, there exists a \emph{distinguished neighborhood}
$U$ of $y$ such that there is a compact Hausdorff space $L$, a filtration  of $L$
\begin{equation*}
L=L^{n-i-1}\supset  \cdots \supset L^0\supset L^{-1}=\emptyset,
\end{equation*}
and a homeomorphism
\begin{equation*}
\phi: \R^i\times c(L)\to U
\end{equation*}
that takes $\R^i\times c(L^{j-1})$ onto $Y^{i+j}\cap U$. The subspace $Y_i=Y^i-Y^{i-1}$ is called the $i$th stratum, and, in particular, it is a (possibly empty) $i$-manifold. $L$ is called the \emph{link} of the component of the stratum; it is, in general, not uniquely determined, though it will be unique when $Y$ is a stratified PL pseudomanifold, as defined in the next paragraph.

A \emph{PL-pseudomanifold} of dimension $n$ is a PL space $X$ (equipped with a class of locally finite triangulations) containing a closed PL subspace $\Sigma$ of codimension at least 2 such that $X-\Sigma$ is a PL manifold of dimension $n$ dense in $X$. A \emph{stratified PL-pseudomanifold} of dimension $n$ is a PL pseudomanifold equipped with a specific filtration such that $\Sigma=X^{n-2}$ and the local normal triviality conditions of a stratified space hold with the trivializing homeomorphisms $\phi$ being PL homeomorphisms and each $L$ being, inductively, a PL pseudomanifold. In fact, for any PL-pseudomanifold $X$, such a stratification always exists such that the filtration refines
the standard filtration of $X$ by $k$-skeletons with respect to some triangulation \cite[Chptr. I]{Bo}. Furthermore, intersection homology is known to be a topological invariant of such spaces; in particular, it is invariant under choice of triangulation or stratification (see \cite{GM2}, \cite{Bo}, \cite{Ki}).

A PL pseudomanifold $X$ is oriented if $X-\Sigma$ is oriented as a manifold.

\paragraph{Intersection homology.}
In the context of PL-pseudomanifolds, the intersection chain complex, as defined initially by Goresky and MacPherson \cite{GM1} (see also \cite[Chapter I]{Bo}), is a subcomplex of the complex $C_*(X)$ of PL-chains on $X$. This $C_*(X)$ is a direct limit $\varinjlim_{T\in \mc T} C_*^T(X)$, where $C_*^T(X)$ is the simplicial chain complex with respect to the triangulation $T$ and  the direct limit is taken with respect to subdivision within a family of triangulations compatible with each other under subdivision and compatible with the stratification of $X$. 

Intersection chain complexes are subcomplexes of $C_*(X)$ defined with regard to  \emph{perversity parameters} $\bar p:\Z^{\geq 2}\to  \Z^+$ that are required to satisfy $\bar p(2)=0$ and $\bar p(k)\leq \bar p(k+1)\leq \bar p(k)+1$. We think of the perversity as taking the codimensions of the strata of $X$ as input. The output tells us the extent to which chains in the intersection chain complex will be allowed to intersect that stratum. Thus a simplex $\sigma$ in $C_i(X)$ (represented by a simplex in some triangulation) is deemed $\bar p$-\emph{allowable} if  $\dim(\sigma\cap X^{n-k})\leq i-k+\bar p(k)$, and a chain $\xi\in C_i(X)$ is $\bar p$-allowable if every simplex with non-zero coefficient in $\xi$ or $\bd \xi$  is allowable as a simplex. The allowable chains constitute the chain complex $I^{\bar p}C_*(X)$, and the $\bar p$-perversity intersection homology groups are the homology groups of this chain complex.

We also note here that one can proceed with two versions of this: one can use the usual compactly supported chains that, in a given triangulation, can be described by finitely many simplices with non-zero coefficients. Or, one may use Borel-Moore chains, for which one requires only that chains contain locally-finite numbers of simplices with non-zero coefficients. This latter case is important to the sheaf-theoretic version of intersection homology and will be important to us  in Section \ref{S: sheaves}, below. We will denote the Borel-Moore chain complex by $C^{\infty}_*(X)$, and, when we need to be precise, we will denote the compactly supported complex by $C^c_*(X)$. No decoration generally will imply compactly supported chains. The intersection chain complexes and homology groups will share the corresponding notation. 

For more background on intersection homology, we urge the reader to consult the exposition by Borel, et. al. \cite{Bo}. For both background and application of intersection homology in various fields of mathematics, the reader should see Kirwan and Woolf \cite{KIR}.

\section{Stratified general position for pseudomanifolds}\label{S: GP}

In this section, we study the domain for the intersection products of chains in a pseudomanifold. We begin by developing some preliminary notations and definitions based on those in McClure \cite{McC}. 

\subsection{Preliminaries and statements of theorems.}\label{S: prelims} Let $X$ be an $n$-dimensional PL stratified pseudomanifold. We will denote the $k$-fold product of $X$ with itself by $X(k)$ (to avoid confusion with the skeleton $X^m$). We give the product the obvious stratification: $X(k)^m=\bigcup_{\sum_i^k d_i=m}X^{d_1}\times \cdots \times X^{d_k}$. 

As in \cite{McC}, let $\bar k=\{1,\ldots, k\}$ for $k>1$, and let $\bar 0=\emptyset$. If $R:\bar k\to \bar k'$ is any map of sets, let $R^*: X(k')\to X(k)$ denote the induced composition $$X(k')=\text{Map}(\bar k',X)\to \text{Map}(\bar k,X)=X(k).$$ Then $R^*(x_1,\ldots, x_{k'})=(x_{R(1)},\ldots, x_{R(k)})$. These maps represent generalizations of the standard diagonal embedding $\Delta: X\into X\times X$.

We note that if $R:\bar k \to \bar k'$ is surjective, then $R^*: X(k')\to X(k)$ has the property that each component of each stratum of $X(k')$ injects into a component of a stratum of $X(k)$. In particular, the stratum $X_{d_1}\times \cdots \times X_{d_{k'}}$ injects into $X_{d_{R(1)}}\times \ldots \times X_{d_{R(k)}}$. Furthermore, for each stratum component of $X_{d_1}\times \cdots \times X_{d_{k}} \subset X(k)$, $(R^*)^{-1}(X_{d_1}\times \cdots \times X_{d_{k}})$ is either empty (if there exist $1\leq i,\ell\leq k$ such that $R(i)=R(\ell)$ but $d_i\neq d_{\ell}$) or contained in $X_{d_{R^{-1}(1)}}\times \ldots \times  X_{d_{R^{-1}(k')}}$ (if $d_i=d_{\ell}$ whenever $R(i)=R(\ell)$). Note that, in the latter case, each $d_{R^{-1}(a)}$ is well-defined precisely because of the condition that $d_i=d_{\ell}$ whenever $R(i)=R(\ell)$.

The following definition generalizes McClure's definition in \cite{McC} of general position for maps of manifolds:

\begin{definition} If $A$ is a PL subset of $X(k)$, we will say that $A$ is in \emph{stratified general position} with respect to $R^*$ if for each stratum component $Z=X_{d_1}\times \cdots \times X_{d_{k}}$ of $X(k)$ such that $d_i=d_\ell$ if $R(i)=R(\ell)$, we have 
\begin{equation}\label{E: gen pos}
\dim((R^*)^{-1}(A\cap Z))\leq \dim(A\cap Z)+ \sum_{i=1}^{k'}  d_{R^{-1}(i)}  - \sum_{i=1}^kd_i.
\end{equation} 
In other words, $A$ is in stratified general position with respect to $R^*$ if for each stratum component $Z$ of $X(k)$, $A\cap Z$ is in general position with respect to the map of manifolds from the stratum  containing $(R^*)^{-1}(Z)$ to $Z$.  A PL chain is said to be in stratified general position if its support is, and we write $C_*^{R^*}(X(k))$ for the subcomplex of PL chains $D$ of $C_*(X(k))$ such that both $D$ and $\bd D$ are in stratified general position with respect to $R^*$. 
\end{definition}

We will also need two other notions from \cite{McC}. First, for a differential graded complex $C_*$, we let $S^mC_*$ be the shifted complex with  $(S^mC_*)_i=C_{i-m}$ and $\bd_{S^mC_*}=(-1)^m\bd_{C_*}$. This last notation differs from \cite{McC}, where $\Sigma^m$ is used to denote the shift; we here reserve $\Sigma$ for singular loci of pseudomanifolds. This shift is introduced so that all maps, including the pairing maps to be introduced below, will be degree $0$ chain morphisms.  When $C_*(X)$ is a geometric chain complex, we let the notion of the support of a chain be independent of the functor; in other words, we take $|S^{-n}x|=|x|$, the geometric support of the  chain $x\in C_*(X)$.

\begin{remark} We note that for chain complexes $C_*$ and $D_*$, $S^{-m-n}(C_*\otimes D_*)$ and $S^{-m}C_*\otimes S^{-n}C_*$ are not in general isomorphic as chain complexes by the obvious homomorphism since  $\bd S^{-m-n}(c\otimes d) =(-1)^{m+n}S^{-m-n}\bd(c\otimes d)=(-1)^{m+n}S^{-m-n}(\bd c\otimes d+(-1)^{|c|}c\otimes \bd d)$, where $|c|$ is the degree of $c$. On the other hand, $\bd (S^{-m}c\otimes S^{-n}d)= (-1)^mS^{-m}\bd c\otimes S^{-n}d +(-1)^{m+|c|+n}S^{-n}c\otimes S^{-n} \bd d $. The appropriate isomorphism must take $S^{-m-n}(c\otimes d)$ to $(-1)^{n\deg(c)}S^{-m}c\otimes S^{-n}d$. This sign correction was not taken into account in the original version of \cite{McC}.

More generally, for complexes $A^i_*$, define $\Theta: S^{m_1}A^1_*\otimes \cdots \otimes S^{m_k}A^k_* \to S^{\sum m_i} (A_1^* \otimes \cdots \otimes A^k_*)$ by $$\Theta(S^{m_1}x_1\otimes \cdots \otimes S^{m_k}x_k)=(-1)^{\sum_{i=2}^k (m_i\sum_{j<i}|x_j|)} S^{\sum m_i}(x_1\otimes \cdots \times x_k).$$ This is a chain isomorphism; see Lemma \ref{L: Theta}, in Appendix A below.
\end{remark}

Secondly, we will need to consider the exterior product $\varepsilon$ defined in \cite[Section 7]{McC}.  The product $\varepsilon$ is the multilinear extension of the product that takes $\sigma_1\otimes \sigma_2$, where the $\sigma_i$ are oriented simplices, to a chain with support $|\sigma_1|\times |\sigma_2|$ and with appropriate orientation. This is a direct generalization of the standard simplicial cross product construction (see e.g. \cite{MK}); we refer the reader to \cite[Section 7]{McC} for details. The original version of \cite{McC} used only this product, but the revised version incorporates  a sign and grading correction in order to define $\bar \varepsilon$, which will be appropriately Poincar\'e dual to the cross product on cochains; without these sign and grading corrections, this duality occurs only up to signs.  In \cite{McC}, $\varepsilon_k$ is defined as a map from $C_*(M_1)\otimes \cdots \otimes  C_*(M_k)\to C_*(M_1\times \cdots \times M_k)$. With $\dim(X_i)=m_i$, we define $\bar \varepsilon_k: S^{-m_1}C_*(X_1)\otimes \cdots \otimes  S^{-m_k}C_*(X_k)\to S^{-\sum m_i}C_*(X_1\times \cdots \times  X_k)$ as $(-1)^{e_2(m_1,\ldots ,m_k)}$ times the composition of the chain isomorphism $\Theta: S^{-m_1}C_*(X_1)\otimes \cdots \otimes S^{-m_k}C_*(X_k)\to S^{-\sum m_i}(C_*(X_1 \times \cdots \times  X_k))$ described in the preceding paragraph with the $-\sum m_i$ shift of McClure's $\varepsilon$. Here $e_2(m_1,\ldots ,m_k)$ is the elementary symmetric polynomial of degree two on the symbols $m_1,\ldots ,m_k$, so $e_2(m_1,\ldots ,m_k)=\sum_{i=1}^k \sum_{j<i} m_im_j$.
In other words,  $\bar \varepsilon$ is the composite
\begin{diagram}
S^{-m_1}C_*(X_1)\otimes\cdots \otimes  S^{-m_k}C_*(X_k) &\rTo^{\Theta}& S^{-\sum m_i}(C_*(X_1)\otimes \cdots \otimes C_*(X_k))\\
{}&\rTo^{(-1)^{e_2}S^{-\sum m_i}\varepsilon}& S^{-\sum m_i}C_*(X_1\times \cdots \times  X_k).
\end{diagram}
As for $\varepsilon$, $\bar \varepsilon$ is a monomorphism. Furthermore, it is a degree $0$ chain map since $\Theta$ and $\varepsilon$ are. 

The map $\bar \varepsilon$ so-defined is Poincar\'e dual to the iterated cochain cross product; see Lemma \ref{L: eps dual}  in Appendix A. This version of the chain product also corrects the commutativity of Lemma 10.5b from the original version of \cite{McC}; see Lemma \ref{L: mccom} in Appendix A.

\subsubsection{Domains}  With the notation introduced above, we can define our domain for the intersection pairing:

\begin{definition}
For $k\geq 2$, let the \emph{domain} $G_k$ be the subcomplex of $(S^{-n}C_*X)^{\otimes k}$ consisting of elements $D$ such that both $\bar \varepsilon(D)$ and $\bar \varepsilon(\bd D)$ are in stratified general position with respect to all generalized diagonal maps, i.e. $$G_k=\bigcap_{k'<k}\bigcap_{R:\bar k\onto\bar k'} \bar \varepsilon^{-1}(S^{-nk}C_*^{R^*}(X(k))).$$
\end{definition}

\begin{remark}
The reason for the shifting is so that the intersection product becomes a degree $0$ chain map. See \cite{McC}.
\end{remark}

We can now state our main theorems concerning domains.

\begin{theorem}\label{T: qi}
The inclusion $G_k\into (S^{-n}C_*X)^{\otimes k}$ is a quasi-isomorphism for all $k\geq 1$.
\end{theorem}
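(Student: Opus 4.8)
## Proof proposal for Theorem \ref{T: qi}

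The plan is to follow the inductive skeleton of McClure's argument (his Proposition 12.3) but to replace all uses of Euclidean-ball general position by the stratified general position machinery alluded to in the introduction (the generalization of McCrory's results from \cite{Mc78}). Since $G_k$ is defined as a finite intersection $\bigcap_{k'<k}\bigcap_{R:\bar k\onto\bar k'} \bar\varepsilon^{-1}(S^{-nk}C_*^{R^*}(X(k)))$, the natural strategy is a double induction: an outer induction on $k$, and an inner induction on the (finite) poset of surjections $R$, ordered so that one adjoins one general-position condition at a time. Concretely, fix an enumeration $R_1,\dots,R_N$ of the relevant surjections and set $H_0=(S^{-n}C_*X)^{\otimes k}$ and $H_j=H_{j-1}\cap\bar\varepsilon^{-1}(S^{-nk}C_*^{R_j^*}(X(k)))$, so $H_N=G_k$; it suffices to show each inclusion $H_j\hookrightarrow H_{j-1}$ is a quasi-isomorphism, and then compose. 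For each such step one must produce, given a cycle $z\in H_{j-1}$, a homology in $H_{j-1}$ pushing $z$ into $H_j$, and dually show injectivity on homology, i.e. that a boundary $z=\bd w$ with $z\in H_j$ can be written $z=\bd w'$ with $w'\in H_j$.

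The engine for the moving argument is a stratified general position theorem: given finitely many PL chains on $X(k)$ (here the images under $\bar\varepsilon$ of the relevant pieces of $z$ and of a chosen nullhomotopy, together with their boundaries), one can find an ambient PL isotopy of $X(k)$, small and stratum-preserving, after which all the designated supports are in stratified general position with respect to the finitely many maps $R_j^*$ — that is, each support meets each stratum $Z$ of $X(k)$ in the dimension predicted by \eqref{E: gen pos}. This is exactly where McClure uses balls: he covers $M$ by Euclidean charts, applies classical general position locally, and patches. Here instead one works stratum by stratum: on each open manifold stratum of $X(k)$ the relevant condition \eqref{E: gen pos} is precisely ordinary general position for the submersion from the stratum over it to $Z$, so McCrory's stratified general position (suitably generalized to products and to the maps $R^*$, which on each stratum are linear projections/diagonals) gives the required isotopy, and the local normal triviality of the pseudomanifold structure lets one make these compatible across strata of different depths by an induction on depth. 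One should isolate this as a standalone lemma — something like ``any finite collection of PL chains on $X(k)$ can be made, by an ambient stratified PL isotopy, to lie in stratified general position with respect to any given finite set of maps $R^*$'' — together with the complementary statement that the trace of the isotopy (the prism/homotopy chain) can also be taken in stratified general position. Granting this lemma, the homological algebra is formal: the isotopy supplies a chain homotopy $P$ with $\bd P z + P\bd z = z - z'$, and because the isotopy was chosen so that $\bar\varepsilon$ of $z'$, of $Pz$, and of all their boundaries are in stratified general position with respect to $R_j^*$ (and, by carrying along the data from the previous steps, with respect to $R_1,\dots,R_{j-1}$ as well), both $z'$ and $Pz$ lie in $H_j$; this proves surjectivity on homology. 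Injectivity is the same argument applied to a chain $w$ with $\bd w=z\in H_j$: move $w$ to $w'$ in general position rel the already-positioned $z$, i.e. keeping $z$ fixed while putting $w'$ and $\bd$ of the homotopy into stratified general position, so that $w'\in H_j$ and $\bd w'=z$.

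I anticipate two points of friction. The first, and genuinely the main obstacle, is the shift/sign bookkeeping flagged in the introduction (item 3 in the list of changes from \cite{McC}): $G_k$ lives in the \emph{shifted} complex $(S^{-n}C_*X)^{\otimes k}$, and $\bar\varepsilon$ is defined through the twisting isomorphism $\Theta$ and the correction factor $(-1)^{e_2(n,\dots,n)}$. One must check that the chain homotopy $P$ produced geometrically is compatible with these shifts — i.e. that the geometric prism operator, after composing with $\Theta$ and the $\varepsilon$-shift, still satisfies $\bd P + P\bd = \mathrm{id} - (\text{isotopy})$ with the correct signs — and that ``stratified general position'' for a chain and for its shift coincide, which is fine since we declared $|S^{-n}x|=|x|$, but the signs in $\bd_{S^{-n}}=(-1)^n\bd$ must be tracked through the homotopy identity. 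This is the kind of computation the paper defers to Appendix A, and I would do the same. The second, more routine, obstacle is the compatibility of the stratified isotopy across strata: one needs the isotopy chosen on a deep stratum to extend over the distinguished neighborhoods $\R^i\times c(L)$ without destroying the general position already achieved on the nearby higher-dimensional strata, which is handled by an induction on the depth of the stratification of $X(k)$ using the product form $\R^i\times c(L)$ of distinguished neighborhoods — standard in the PL stratified category but requiring care to state cleanly. Finally, the base case $k=1$ is trivial (the intersection defining $G_1$ is empty, so $G_1=S^{-n}C_*X$), which anchors the outer induction.
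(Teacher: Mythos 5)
Your outline reproduces the inductive/filtration skeleton and correctly identifies Proposition~\ref{P: shift} and the shift/sign bookkeeping as the two technical hinges, but there is a genuine gap in the moving argument itself: you propose to push chains into position by an \emph{ambient} stratum-preserving isotopy of $X(k)$, and then claim the resulting chain $z'$ and the prism $Pz$ ``lie in $H_j$.'' They do not, at least not for free. The groups $H_j$ (like $G_k$) are subcomplexes of $(S^{-n}C_*X)^{\otimes k}$; an element of $H_j$ is recovered from a chain on $X(k)$ only by pulling back along the monomorphism $\bar\varepsilon_k$, which requires the chain to lie in the \emph{image} of $\bar\varepsilon_k$ (a chain with product support). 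A general ambient isotopy of $X(k)$ destroys the product form of the support, so $h_{1*}\bar\varepsilon_k(z)$ and the prism need not be $\bar\varepsilon_k$ of anything, and there is no candidate $z'\in H_j$ at all. This is precisely why the paper works with $l$th-factor homotopies $h=\mathrm{id}\times\phi\times\mathrm{id}$ and proves Lemma~\ref{L: pres prod}, which says these homotopies carry $\mathrm{image}(\bar\varepsilon_k)$ to $\mathrm{image}(\bar\varepsilon_k)$; that lemma is the load-bearing step you are missing.

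The second, related, issue is the shape of your inner filtration. You filter by adjoining one surjection $R_j$ at a time, which forces you to simultaneously (a) achieve general position for $R_j$ and (b) preserve it for $R_1,\dots,R_{j-1}$. With only factor homotopies available, this is delicate: two surjections may ``conflict'' in the sense that achieving general position for one by moving a given factor can spoil it for the other. The paper sidesteps this by filtering instead by $\Lambda_j=\{R : R^{-1}(R(i))$ is a singleton for all $i>j\}$, which is a nested chain $\Lambda_0\subset\cdots\subset\Lambda_k$ organized so that the passage $G_k^{j-1}\to G_k^j$ can be accomplished by moving \emph{only the $j$th factor} (Lemma~\ref{L: homotopy}), and the preservation of prior general position is built into the definition since $\Lambda_{j-1}\subset\Lambda_j$. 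So beyond the $\bar\varepsilon$-image problem, your choice of filtration would need additional argument that the paper's more structured filtration makes unnecessary.
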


For intersection chains, we must generalize slightly.

\begin{definition}
Let $P=(\bar p_1,\ldots,\bar p_k)$ be a collection of traditional perversities, and let $G_k^P=G_k\cap \left(S^{-n}IC_*^{\bar p_1}(X)\otimes \cdots\otimes S^{-n}IC_*^{\bar p_k}(X)\right)$. In other words, $G_k^P$ consists of those chains $D$ in $S^{-n}IC_*^{\bar p_1}(X)\otimes \cdots\otimes S^{-n}IC_*^{\bar p_k}(X)$ such that $\bar \varepsilon_k(D)$ and $\bar \varepsilon_k(\bd D)$ are in stratified general position with respect to $R^*$ for all surjective $R:\bar k\onto\bar k'$. 
\end{definition}

\begin{theorem}\label{T: pqi}
The inclusion $G_k^P\into S^{-n}IC_*^{\bar p_1}(X)\otimes \cdots\otimes S^{-n}IC_*^{\bar p_k}(X)$ is a quasi-isomorphism.
\end{theorem}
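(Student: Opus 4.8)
The plan is to deduce Theorem \ref{T: pqi} from Theorem \ref{T: qi} by the same general-position-pushing strategy, but carried out \emph{inside} the intersection-chain subcomplexes. The key point is that putting a chain into stratified general position (with respect to all the generalized diagonals $R^*$) is accomplished, as in McClure's Proposition 12.3 and in the proof of Theorem \ref{T: qi}, by a chain homotopy built from a generic PL isotopy (or a sequence of elementary subdivision/joining moves à la McCrory \cite{Mc78}) applied one tensor factor at a time. The crucial observation is that these moves are \emph{supported within arbitrarily small neighborhoods} and, more importantly, can be chosen to respect the stratification of $X$: a small generic PL move of a $\bar p_i$-allowable chain is again $\bar p_i$-allowable, because allowability of a simplex $\sigma$ is a condition only on $\dim(\sigma\cap X^{n-k})$, and a sufficiently generic small perturbation can only decrease such intersection dimensions (or keep them at their generic value, which already satisfies the allowability inequality once the chain itself does). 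So the entire homotopy apparatus used to prove Theorem \ref{T: qi} restricts to the product of intersection chain complexes.

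Concretely, I would proceed as follows. First, recall from the proof of Theorem \ref{T: qi} the explicit chain homotopy $h$ and the deformation argument showing that every cycle in $(S^{-n}C_*X)^{\otimes k}$ is homologous, within the whole complex, to one in $G_k$, and every cycle in $G_k$ that bounds in the big complex already bounds in $G_k$. Second, observe that the building blocks of $h$ — the operators coming from generic PL ambient isotopies of $X$ applied to individual tensor factors, together with the prism/cone operators recording the track of the isotopy — all preserve intersection-chain allowability for a fixed perversity $\bar p_i$ in the $i$th factor, since (i) an ambient PL isotopy of $X$ that is the identity outside a distinguished neighborhood and generic relative to the strata sends $\bar p_i$-allowable chains to $\bar p_i$-allowable chains, and (ii) the track of such an isotopy of a $\bar p_i$-allowable chain, being contained in a product of the chain with an interval mapped genericially, again satisfies the allowability dimension bounds. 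Hence $h$ restricts to a self-homotopy of $S^{-n}IC_*^{\bar p_1}(X)\otimes\cdots\otimes S^{-n}IC_*^{\bar p_k}(X)$ carrying this complex into $G_k^P = G_k \cap \bigl(S^{-n}IC_*^{\bar p_1}(X)\otimes\cdots\otimes S^{-n}IC_*^{\bar p_k}(X)\bigr)$. Third, conclude by the standard argument: a chain-homotopy between the identity and a map with image in a subcomplex, that preserves the subcomplex, forces the inclusion of that subcomplex to be a quasi-isomorphism (check surjectivity and injectivity on homology directly using $h$).

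The step I expect to be the main obstacle is the genericity-versus-allowability compatibility: one must be sure that the PL general-position moves needed to land in $G_k$ can \emph{simultaneously} be made generic with respect to all the generalized diagonals $R^*$ \emph{and} keep every simplex (and every boundary simplex) $\bar p_i$-allowable in the relevant factor. For the diagonals this is a finite list of conditions on a single chain, so a single sufficiently generic perturbation handles them all at once (this is already the content of the proof of Theorem \ref{T: qi}); the extra wrinkle here is that the perturbation must not be \emph{too} generic transversally to the singular strata $X^{n-k}$ in a way that would push a simplex off its allowed dimension — but in fact moving to general position with respect to $X^{n-k}$ only \emph{lowers} $\dim(\sigma\cap X^{n-k})$, so allowability is automatically preserved, and the same holds for the isotopy track. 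One should also check the compatibility with the shifts $S^{-n}$ and with $\bar\varepsilon$ (via $\Theta$), but since these are fixed chain isomorphisms built from signs only, they do not interact with supports and hence not with allowability or general position; this is exactly the point already handled in the passage from McClure's original $\varepsilon$ to $\bar\varepsilon$. A secondary technical point is that $IC_*^{\bar p_i}(X)$ is a direct limit over compatible triangulations, so the moves must be realized at the chain level in this direct limit, but this is the same bookkeeping already present in the proof of Theorem \ref{T: qi} and requires no new idea.
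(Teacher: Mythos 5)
The overall structure of your argument — reduce to Theorem \ref{T: qi}, then verify that the chain-homotopy apparatus restricts to the tensor product of intersection chain complexes — is exactly the paper's route. The paper dispatches Theorem \ref{T: pqi} in one remark after the proof of Theorem \ref{T: qi}: one replaces simplices $\tau_{a_j}$ by $\bar p_j$-allowable chains $\xi_{a_j}$ throughout, and notes that since all the homotopies constructed in the proof are \emph{stratum-preserving} (and proper), they carry allowable chains to allowable chains and produce allowable traces.

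The place your proposal goes wrong is in the \emph{mechanism} you invoke for why allowability is preserved. You argue that a ``sufficiently generic small perturbation can only decrease'' the intersection dimensions $\dim(\sigma\cap X^{n-k})$, and that ``moving to general position with respect to $X^{n-k}$ only lowers $\dim(\sigma\cap X^{n-k})$, so allowability is automatically preserved.'' This is not what the isotopies of Proposition \ref{P: shift} do, and it is not a viable substitute. The isotopies in the proof of Theorem \ref{T: qi} are \emph{stratum-preserving}: they are built via McCrory--Zeeman shifts \emph{within} each stratum $X_\kappa$, joined with the identity on the link, so that each stratum is carried to itself. Consequently $\dim(\phi(\sigma,1)\cap X_\kappa)=\dim(\sigma\cap X_\kappa)$ — the intersection dimensions are preserved \emph{exactly}, not lowered (the paper notes this explicitly in Proposition \ref{P: shift}(2)). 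This exactness is needed for two reasons. First, stratified general position with respect to the diagonals is a condition \emph{inside} each stratum; you achieve it by moving chains around within the strata, not by pushing them transverse to, or off of, the singular strata (which you generally cannot do to an intersection chain that is required to meet $\Sigma$ in excess dimension). Second, the allowability of the trace $h_*(\xi\otimes\iota)$ follows from stratum-preservation: since $h((\xi\cap X_\kappa)\times I)\supset h(\xi\times I)\cap X_\kappa$, one gets $\dim(h(\xi\times I)\cap X_\kappa)\le \dim(\xi\cap X_\kappa)+1$, which combined with the shift in total degree by $1$ gives exactly the allowability inequality for the trace. A generic, non-stratum-preserving isotopy would give no such clean bound — intermediate times could push a chain deeper into $X^{n-k}$ — so ``genericity'' alone is not the right hypothesis. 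In short: your conclusion is correct and the top-level plan matches the paper, but the justification must rest on stratum-preservation (dimensions preserved, hence allowability preserved for both the endpoint and the trace), not on any claim that the moves decrease intersection dimensions with strata.
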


\begin{remark}\label{R: gen sup}
These theorems can be generalized to include other cases of interest in intersection homology. We could incorporate local coefficient systems defined on $X-\Sigma$, or, more generally, multiple local coefficient systems $\mf L_i$ and work with $S^{-n}IC_*^{\bar p_1}(X;\mf L_1)\otimes \cdots\otimes S^{-n}IC_*^{\bar p_k}(X;\mf L_k)$. We could also  instead consider the complexes $C_*^{\infty}(X)$ and $IC_*^{\infty}(X)$. In fact, the definitions of general position carry over immediately, and all homotopies constructed in the following proof are proper, thus they yield well-defined maps on these locally-finite chain complexes. The proofs that $G_k$ and $G_k^P$ are quasi-isomorphic to the appropriate tensor products is the same. We can also consider ``mixed type'' versions of $G_k$ that are quasi-isomorphic to $S^{-n}I^{\bar p_1}C_*^{\infty}(X)\otimes \cdots S^{-n}I^{\bar p_j}C_*^{\infty}(X)\otimes S^{-n}I^{\bar p_{j+1}}C_*^{c}(X)\otimes \cdots \otimes S^{-n}I^{\bar p_k}C_*^{c}(X)$. The necessary modifications are fairly direct.
\end{remark}

The proofs of Theorems \ref{T: qi} and \ref{T: pqi} (as well as of the more general cases mentioned in the remark) are nearly identical, so we will present only the proof of Theorem \ref{T: qi} in detail. For Theorem \ref{T: pqi}, we simply note that 
instead of chains of the form $\sum_A n_aS^{-n}\tau_{a_i}\otimes \cdots\otimes S^{-n}\tau_{a_k}$, we would  instead consider chains $\sum_A n_aS^{-n}\xi_{a_i}\otimes \cdots\otimes S^{-n}\xi_{a_k}$, where each $\xi_{a_j}$ is a $\bar p_j$ allowable chain in $X$. We note also that since all homotopies constructed below are stratum-preserving and proper, they preserve allowability of chains (compactly supported or not), and the induced homologies will also be allowable (see \cite{GBF3, GBF10}). 

\subsection{Proof of Theorem \ref{T: qi}}

In this subsection, we prove  Theorem \ref{T: qi}.

\begin{proof}[Proof of Theorem \ref{T: qi}.]
The proof follows the outline of that of McClure's \cite[Proposition 12.2]{McC}, and many of the steps are essentially identical. However, there are some points at which it is necessary to pay closer attention to the stratification, and one large step (our variant of McClure's Proposition 14.6) that must be done entirely differently. This is because McClure covers his manifolds with euclidean balls and then employs general position arguments within these euclidean structures; even modified versions of this covering approach seem to fail on stratified spaces. We will roughly follow the entire proof in order to achieve a sense of completeness and to ensure that all steps at which the stratification enters materially are properly addressed. However, we will refer often to \cite{McC}, particularly for steps that do not rely on explicit mention of the stratification. 

We recall that if $X$ is a stratified spaces, a (PL) homotopy  $H:Y\times I\to X$ is called \emph{stratum preserving} if for each $y\in Y$, $H(y,I)$ is contained in a single stratum of $X$. If $\phi:X\times I\to X$ is a stratum preserving homotopy and $l$ is an integer $1\leq l\leq k$, then there is an $l$th factor homotopy determined by $$X(k)\times I\cong X(l-1)\times X\times I\times X(k-l)\overset{\text{id}\times \phi\times \text{id} }{\to} X(k),$$ and  this homotopy is stratum-preserving. Our goal, generally speaking, is to use stratum-preserving homotopies to push chains into general position one factor at a time.

For this, we first need a version of 
McClure's Lemma 13.2 \cite{McC}, which says, essentially, that $l$th factor homotopies take products of chains to products of chains. It is fairly straightforward that McClure's lemma  remains true in our context, though we update the statement, mostly to take account of the change from $\varepsilon$ to $\bar \varepsilon$ (see above).

To account for some of the shifting, notice that $(S^{-nk}C_*(X(k)))\otimes C_*(I)$ is canonically isomorphic to $S^{-nk}(C_*(X(k))\otimes C_*(I))$ with no signs coming in (since we associate no suspension to the $C_*(I)$ term). Thus we have a well-defined chain map $S^{-nk}\varepsilon: S^{-nk}(C_*(X(k))\otimes C_*(I))\to S^{-nk}C_*(X(k)\times I)$.

\begin{lemma}\label{L: pres prod}
Let $h:X(k)\times I\to X(k)$ be an $l$th factor stratum-preserving homotopy, and suppose that $C$ is in the image of $\bar \varepsilon_k$. Then
\begin{enumerate}
\item $S^{-nk}(h\circ i_1)_* C$ is in the image of $\bar \varepsilon_k$, and
\item If $\iota$ is the canonical generator of $C_1(I)$, then $(S^{-nk}h_*)(S^{-nk}\varepsilon) (C\otimes \iota)$ is in the image of $\bar \varepsilon_k$.
\end{enumerate}
\end{lemma}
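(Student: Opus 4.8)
The plan is to reduce everything to McClure's original Lemma 13.2 by unwinding the definition of $\bar\varepsilon_k$ in terms of McClure's $\varepsilon_k$ together with the chain isomorphisms $\Theta$ and the suspension shifts, and then to check that each of these ingredients is compatible with the $l$th factor homotopy $h$. Since $\bar\varepsilon_k = (-1)^{e_2} \, (S^{-nk}\varepsilon_k) \circ \Theta$ and $\Theta$ is an isomorphism, ``$C$ is in the image of $\bar\varepsilon_k$'' is equivalent to ``$C$ is in the image of $S^{-nk}\varepsilon_k$'', i.e.\ $C = S^{-nk}C'$ for some $C'$ in the image of the unsuspended $\varepsilon_k\colon C_*(X)^{\otimes k}\to C_*(X(k))$. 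So both statements are really statements about the image of $\varepsilon_k$, with the shift $S^{-nk}$ carried along formally, and this is exactly the setting of McClure's Lemma 13.2.

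First I would record the observation already made in the excerpt: $(S^{-nk}C_*(X(k)))\otimes C_*(I) \cong S^{-nk}(C_*(X(k))\otimes C_*(I))$ canonically with no signs, so that $S^{-nk}\varepsilon$ and $S^{-nk}h_*$ make sense on the suspended complexes and commute with the un-suspended versions through these identifications. Then for part (1): write $C = S^{-nk}\varepsilon_k(c_1\otimes\cdots\otimes c_k)$ after applying $\Theta^{-1}$ (or more precisely note $C$ lies in the image of $S^{-nk}\varepsilon_k$). Because $h$ is an $l$th factor homotopy, $h\circ i_1$ is of the form $\mathrm{id}_{X(l-1)}\times(\phi\circ i_1)\times\mathrm{id}_{X(k-l)}$ at the space level, and McClure's Lemma 13.2 (which is purely about how $\varepsilon_k$ interacts with maps that are products of maps on the factors) gives that $(h\circ i_1)_*\varepsilon_k(c_1\otimes\cdots\otimes c_k) = \varepsilon_k\big(c_1\otimes\cdots\otimes (\phi\circ i_1)_*c_l\otimes\cdots\otimes c_k\big)$ up to the relevant signs; applying $S^{-nk}$ and reattaching $\Theta$ shows $S^{-nk}(h\circ i_1)_*C$ is again in the image of $\bar\varepsilon_k$. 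For part (2), the same idea applies with $\phi$ replaced by the full homotopy $\phi\colon X\times I\to X$: McClure's Lemma 13.2 handles $h_*\circ\varepsilon$ applied to $C\otimes\iota$, identifying it as $\varepsilon_k$ of a tensor in which the $l$th slot has been replaced by $\phi_*(c_l\times\iota)$ (a chain in $C_*(X)$, since $\phi$ maps into $X$), and the other slots are unchanged; again the suspension and $\Theta$ are carried along formally.

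The main point requiring care — and the only place the stratification enters — is not the algebra but making sure the statement is being applied in the right category: namely that $\phi$ (hence $h$) is stratum-preserving, so that $\phi_*$ and $(\phi\circ i_1)_*$ genuinely land in $C_*(X)$ (respectively $C_*(X(k))$) as PL chain complexes on the \emph{stratified} space, and so that the later general-position arguments can use these homotopies. But for the present lemma this is automatic from the hypothesis that $h$ is an $l$th factor stratum-preserving homotopy, and no new geometric input beyond McClure's Lemma 13.2 is needed. The one genuinely non-routine bookkeeping task is tracking the Koszul signs introduced by $\Theta$ and by the $(-1)^{e_2}$ and $(-1)^m$ shift conventions through the identity $h_*\circ(S^{-nk}\varepsilon)\circ(\Theta\otimes\mathrm{id}) = (\text{suitably signed})\,\bar\varepsilon_k\circ(\cdots)$; I expect this to be the main obstacle, and it is precisely the kind of sign-chasing that the paper has flagged as the source of the corrections to \cite{McC}, so I would relegate the detailed sign verification to the place where $\Theta$ and $\bar\varepsilon$ are analyzed (Appendix A, Lemmas \ref{L: Theta} and following) and cite it here.
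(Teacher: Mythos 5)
Your proposal is correct and matches the paper's (implicit) argument: the paper simply asserts that McClure's Lemma 13.2 carries over once one accounts for the change from $\varepsilon$ to $\bar\varepsilon$, and your reduction — observing that $\Theta$ is an isomorphism and $(-1)^{e_2}$ a unit, so the image of $\bar\varepsilon_k$ coincides with the image of $S^{-nk}\varepsilon_k$ — makes that assertion precise and then invokes McClure's lemma on the unsuspended side exactly as intended. One small clarification: your closing worry about Koszul sign bookkeeping is already disposed of by that same observation, since the conclusion of the lemma is only about membership in a subcomplex, which is insensitive to the $(-1)^{e_2}$ factor and the $\Theta$-signs, so no further sign-chasing is needed for this particular statement.
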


In other words, an $l$th factor homotopy homotopes  a product to a product, and the trace of the homotopy is also a product.

Next, we recall McClure's filtration \cite[Definition 13.3]{McC} of $(S^{-m}C_*M)^{\otimes k}$, modified here to take into account our shift conventions:
\begin{definition}
\begin{enumerate}
\item For $0\leq j\leq k$, define $\Lambda_j$ to be the set of all surjections $R:\bar k\onto \bar k'$ such that for each $i>j$, the set $R^{-1}(R(i))$ has only one element.

\item For $0\leq j\leq k$, let $G_k^j$ be the subcomplex of $(S^{-n}(C_*X))^{\otimes k}$ of chains $C$ for which $\bar \varepsilon_k(C)$ and $\bar \varepsilon_k(\bd C)$ are in stratified general position with respect to $R^*$ for all $R\in \Lambda_j$:
 $$G_k^j=\bigcap_{k'<k}\bigcap_{\overset{R:\bar k\to\bar k'}{R\in \Lambda_j}} \bar \varepsilon_k^{-1}(S^{-nk}C_*^{R^*}X(k)).$$
\end{enumerate}
\end{definition}

This yields the filtration
$$G_k=G_k^k\subset G_k^{k-1}\subset \cdots\subset C_k^0=(S^{-n}C_*X)^{\otimes k},$$ and we will prove the following proposition, which immediately implies Theorem \ref{T: qi}:

\begin{proposition}\label{P: filtered qi}
For each $j$, $1\leq j\leq k$, the inclusion $G_k^j\into G_k^{j-1}$ is a quasi-isomorphism.
\end{proposition}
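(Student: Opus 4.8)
The plan is to prove Proposition~\ref{P: filtered qi} by the standard strategy for showing that a subcomplex defined by a general-position condition is quasi-isomorphic to the ambient complex: construct, for each cycle in $G_k^{j-1}$, a chain homotopy pushing it into $G_k^j$, and do the same relatively so that cycles already in $G_k^j$ that bound in $G_k^{j-1}$ also bound in $G_k^j$. Concretely, I would show that $G_k^j \hookrightarrow G_k^{j-1}$ induces a surjection and an injection on homology by exhibiting a deformation: given $C \in G_k^{j-1}$ a cycle, produce a chain $D$ with $\bd D = C - C'$ where $C' \in G_k^j$, and similarly for the relative case. The mechanism for producing $D$ is an $l$th factor stratum-preserving PL homotopy, applied one tensor factor at a time, exactly as in McClure's \cite[Proposition~12.2]{McC}; the difference between $\Lambda_{j-1}$ and $\Lambda_j$ is that surjections in $\Lambda_{j-1}$ are allowed to identify the $j$th coordinate with earlier ones, so the extra general position conditions to be arranged all concern the behavior of the $j$th factor. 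Lemma~\ref{L: pres prod} guarantees that such homotopies carry chains in the image of $\bar\varepsilon_k$ to chains in the image of $\bar\varepsilon_k$, and that the traces are also in the image, so the construction stays within the world of (shifted) products of chains.

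The key steps, in order, are: (1) reduce to working with a finite subcomplex — pick a triangulation fine enough that a given cycle $C \in G_k^{j-1}$ and the relevant chains are simplicial, so that only finitely many simplices and finitely many strata of $X(k)$ are involved; (2) for the $j$th factor, invoke the stratified general position theorem (the generalization of McCrory \cite{Mc78} advertised in the introduction) to produce a stratum-preserving PL ambient isotopy $\phi: X \times I \to X$, small and supported near the relevant simplices, such that the induced $j$th factor homotopy $h: X(k)\times I \to X(k)$ moves $\bar\varepsilon_k(C)$ and $\bar\varepsilon_k(\bd C)$ into stratified general position with respect to every $R^*$ with $R \in \Lambda_{j-1}$ but $R \notin \Lambda_j$ (i.e.\ every $R$ identifying $j$ with something) — while leaving the conditions for $R \in \Lambda_j$ undisturbed, since those only constrain factors that $\phi$ does not move in a way affecting $R$; (3) set $C' = (h\circ i_1)_*C$ and let $D = h_*\varepsilon(C \otimes \iota)$ be the trace, so that $\bd D = \pm(C' - C)$ after accounting for the shift signs — here one checks, with the conventions of $S^{-nk}$ and the isomorphism $\Theta$, that the boundary formula comes out with the claimed sign and that $D \in G_k^{j-1}$; (4) verify $C' \in G_k^j$ and that $D$ actually lies in $G_k^j$ as well when $C$ already did, giving the relative statement. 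Finally, (5) because $X$ is not compact and chains are only locally finite in general, assemble these local moves into a single (proper, stratum-preserving) homotopy using a locally finite cover and the usual infinite-composition/colimit argument, exactly as in \cite{McC}; properness is what makes this also work for the $C_*^\infty$ variants mentioned in Remark~\ref{R: gen sup}.

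The main obstacle — and the one genuinely new ingredient relative to \cite{McC} — is step~(2): producing the stratum-preserving PL isotopy that achieves stratified general position with respect to the generalized diagonals $R^*$. McClure gets this for free by covering his manifold with Euclidean balls and doing classical PL general position inside each ball; on a stratified pseudomanifold there is no such cover, so one must instead work stratum by stratum and use a stratified general position result. The delicate points are: the isotopy must be stratum-preserving (so that allowability of intersection chains is preserved, per Remark~\ref{R: gen sup}), it must simultaneously handle all the finitely many relevant surjections $R$ — which translate, via the definition of stratified general position, into finitely many ordinary general-position conditions for maps of manifolds, one for each stratum component $Z$ of $X(k)$ — and it must not destroy the conditions already arranged for $R \in \Lambda_j$. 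The bookkeeping that the $j$th-factor homotopy only affects the conditions indexed by $R \notin \Lambda_j$, together with the fact (noted in Section~\ref{S: prelims}) that $(R^*)^{-1}(Z)$ is either empty or a single stratum component, is what makes this induction close; I expect most of the real work of the section to be in setting up this stratified general position machinery and verifying it interacts correctly with the product structure via Lemma~\ref{L: pres prod}.
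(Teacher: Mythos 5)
Your strategy is the paper's: the inclusion $G_k^j\into G_k^{j-1}$ is proved by constructing, for each $D\in G_k^{j-1}$ with $\bd D\in G_k^j$, a $j$th-factor stratum-preserving PL homotopy built from a McCrory--Zeeman-style stratified general position isotopy of $X$ (the paper's Proposition~\ref{P: shift}), with Lemma~\ref{L: pres prod} guaranteeing the pushed chain and its trace remain in the image of $\bar\varepsilon_k$, and the boundary-of-trace computation, carried out with the shift-sign conventions, supplying both the absolute and relative homologies needed to conclude. Two small imprecisions to flag: first, the conditions the homotopy must actually deliver (the paper's Lemma~\ref{L: homotopy}) are that the \emph{end} chain is in stratified general position for every $R\in\Lambda_j$ while the \emph{trace} retains stratified general position for every $R\in\Lambda_{j-1}$ --- your phrasing about ``leaving the conditions for $R\in\Lambda_j$ undisturbed'' garbles which set is preserved and which is newly achieved; second, the paper does not assemble local moves via a locally finite cover and an infinite-composition argument, but instead produces a single global stratum-preserving isotopy of $X$ (Proposition~\ref{P: shift}, built from the systematic $(j,\kappa)$-shifts of McCrory--Zeeman), precisely because the cover-by-Euclidean-balls approach of \cite{McC} is what fails on stratified spaces and must be replaced.
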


The proof of this proposition relies on the following lemma, analogous to \cite[Lemma 13.5]{McC}.

\begin{lemma}\label{L: homotopy}
Suppose $D\in G_k^{j-1}$ and $\bd D\in G^j_k$. Then there is a $j$th factor  stratum-preserving homotopy $h:X(k)\times I\to X(k)$ such that 
\begin{enumerate}
\item $h\circ i_0$ is the identity,
\item the chains $S^{-nk}(h\circ i_1)_*(\bar \varepsilon_kD)$, $S^{-nk}(h\circ i_1)_*(\bar \varepsilon_k(\bd D))$, and $S^{-nk}h_*(S^{-nk}\varepsilon(\bar \varepsilon_k(\bd D)\otimes \iota))$ are in stratified general position with respect to $R^*$ for all $R\in \Lambda_j$,
\item $S^{-nk}h_*( S^{-nk} \varepsilon( \bar \varepsilon_k (D)\otimes \iota))$ is in stratified general position with respect to $R^*$ for all $R\in \Lambda_{j-1}$.
\end{enumerate}
\end{lemma}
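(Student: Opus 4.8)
\textbf{Proof proposal for Lemma \ref{L: homotopy}.}

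The plan is to build the homotopy $h$ by a single application of stratified general position, applied factor-by-factor within the $j$th slot, exactly in the spirit of McClure's Lemma 13.5 but with McCrory-style stratified general position replacing the Euclidean-ball arguments. Here is the outline. We are given $D\in G_k^{j-1}$ with $\bd D\in G_k^j$. Write $\bar\varepsilon_k(D)$ as a product chain (which it is, by definition of the domain). The key observation is that for $R\in\Lambda_j$ but $R\notin\Lambda_{j-1}$, the surjection $R$ identifies the index $j$ with some strictly larger index, and all indices $>j$ are singletons; so the only "new" generalized diagonals whose general position we need to establish are those that glue the $j$th factor to exactly one later factor (or, more precisely, to the copies of $X$ indexed by $R^{-1}(R(j))$ when $j$ is the smallest such index). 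The hypothesis $D\in G_k^{j-1}$ already handles every $R\in\Lambda_{j-1}$, and the hypothesis $\bd D\in G_k^j$ handles the boundary terms with respect to all of $\Lambda_j$. So the homotopy only has to fix the position of $\bar\varepsilon_k(D)$ itself with respect to the finitely many "level-$j$" diagonals, while being required to preserve the good position already enjoyed by $\bd D$ and by $D$ relative to $\Lambda_{j-1}$.

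First I would reduce to a genuinely finite problem: there are only finitely many perversity-free surjections $R:\bar k\onto\bar k'$ with $k'<k$, hence finitely many general position conditions \eqref{E: gen pos}, and each condition is a condition on the dimension of $(R^*)^{-1}$ of the intersection of the support with each stratum component $Z$ of $X(k)$. Since $|D|$ and $|\bd D|$ are compact PL sets meeting only finitely many strata of $X(k)$, for each such $Z$ we have a map of PL manifolds (the stratum of $X(k')$ surjecting onto $Z$, via $R^*$) and a compact PL subset $\bar\varepsilon_k(D)\cap Z$ of the target manifold. Second, I would invoke the stratified/relative PL general position theorem (the generalization of McCrory \cite{Mc78} developed for this paper, i.e.\ the tool replacing McClure's Proposition 14.6): given the compact PL set $\bar\varepsilon_k(D)$ in $X(k)$, there is an ambient PL isotopy of $X(k)$, as small as we like and supported away from $|\bd D|$ away from where it is already in good position, realized by a stratum-preserving $j$th-factor homotopy $h$, moving $\bar\varepsilon_k(D)$ into general position with respect to all the level-$j$ diagonal maps $R^*$ simultaneously; because we move only the $j$th factor and $h$ is stratum-preserving, Lemma \ref{L: pres prod} guarantees that $S^{-nk}(h\circ i_1)_*(\bar\varepsilon_k D)$ and the trace $S^{-nk}h_*(S^{-nk}\varepsilon(\bar\varepsilon_k(D)\otimes\iota))$ remain in the image of $\bar\varepsilon_k$, i.e.\ are still product chains, so that the general-position statements even make sense. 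Third, I would check the three bookkeeping points: (i) $h\circ i_0=\mathrm{id}$ holds because the isotopy starts at the identity; (ii) the conditions on $\bd D$ and its trace hold because, for $R\in\Lambda_j$, $\bd D$ was already in good position with respect to $R^*$ by hypothesis, a $j$th-factor stratum-preserving homotopy of a chain already in general position with respect to all of $\Lambda_j$ can be chosen to keep it there (a genericity/small-isotopy argument, using that $|\bd D|$ is a proper subset of something and that the bad locus is nowhere dense), and the trace of such a homotopy inherits the bound on dimension since the homotopy sweeps out one extra dimension while the isotopy is transverse; (iii) $S^{-nk}h_*(S^{-nk}\varepsilon(\bar\varepsilon_k(D)\otimes\iota))$ lies in good position with respect to $\Lambda_{j-1}$ because $D\in G_k^{j-1}$ means $\bar\varepsilon_k(D)$ started in good position for all $R\in\Lambda_{j-1}$, and for those $R$ the $j$th factor maps to a singleton, so an $l=j$ factor homotopy does not disturb the relevant diagonal at all (the $j$th coordinate is simply carried along), whence both endpoints and the trace stay in general position relative to $\Lambda_{j-1}$.

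The main obstacle — and the step that genuinely differs from McClure — is the second one: establishing that one can move a compact PL set in $X(k)$ into stratified general position with respect to all the level-$j$ generalized diagonals simultaneously, by a $j$th-factor stratum-preserving homotopy, while not destroying the general position already present for $\bd D$ and for the lower-$\Lambda$ diagonals. On a manifold McClure triangulates, covers by Euclidean balls, and does classical general position locally; that localization is unavailable here because the diagonals $R^*$ interact nontrivially with the stratification and the strata of $X(k)$ are not Euclidean near $\Sigma$. The resolution is the stratified general position package generalizing \cite{Mc78}: one works stratum by stratum in $X(k)$, using that $R^*$ restricted to each stratum component of $X(k')$ is a PL map of manifolds into a stratum component of $X(k)$, and then one must glue the stratumwise isotopies compatibly across the cone-like local structure of the pseudomanifold — this is exactly where the local normal triviality of the stratification and the inductive structure of the links are used, and where care must be taken that the homotopy remains stratum-preserving and proper. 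I would phrase this gluing as a separate lemma (the analogue of McClure's Proposition 14.6) and quote it here; the rest of the proof of Lemma \ref{L: homotopy} is then the routine factor-by-factor bookkeeping sketched above.
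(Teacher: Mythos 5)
Your outline is in the right spirit—use McCrory-style stratified general position, realize the move as a $j$th-factor homotopy, and then bookkeep—but the single step you punt to "a separate lemma (the analogue of McClure's Proposition 14.6)" is not a lemma anyone can prove in the form you state it, and the "routine factor-by-factor bookkeeping" you wave off is in fact the technical heart of the argument. Let me make both points concrete.

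You assert the existence of "an ambient PL isotopy of $X(k)$, \ldots supported away from $|\bd D|$ \ldots, realized by a stratum-preserving $j$th-factor homotopy $h$, moving $\bar\varepsilon_k(D)$ into general position with respect to all the level-$j$ diagonal maps $R^*$ simultaneously." A $j$th-factor homotopy is, by definition, of the very special form $\mathrm{id}_{X(j-1)}\times\phi\times\mathrm{id}_{X(k-j)}$ for some isotopy $\phi:X\times I\to X$. A McCrory-style stratified general position theorem applied to $X(k)$ and the subpolyhedra $R^*(X(k'))$ produces an ambient isotopy of $X(k)$ that is under no compulsion to have this product form; nothing in the Zeeman/McCrory machinery localizes the shift to a single tensor factor. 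Moreover, "supported away from $|\bd D|$" is essentially impossible for a nontrivial $j$th-factor homotopy, since a $j$th-factor homotopy moves every slice $\{x_1\}\times\cdots\times X \times\cdots\times\{x_k\}$ whenever $\phi$ is nontrivial; one cannot keep it off $|\bd D|\subset|D|$. What the paper actually proves (Proposition \ref{P: shift}) is a statement purely about a single copy of $X$: there is a stratum-preserving isotopy $\phi$ of $X$ that puts the simplices of a fixed triangulation $K$ into stratified general position \emph{with one another} inside $X$, together with a separate dimension bound on traces $\phi((\sigma\cap X_\kappa)\times I)\cap\tau$. The passage from this intrinsic statement about $X$ to the desired general position in $X(k)$ with respect to the $R^*$ is then an explicit combinatorial dimension count (the inequalities \eqref{E: condition}, \eqref{E: suffice} and Lemma \ref{L: stratum GP}), which uses both the properties of $\phi$ and the hypotheses $D\in G_k^{j-1}$, $\bd D\in G_k^j$ in an essential, stratum-by-stratum way. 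That count does not reduce to "the $j$th coordinate is simply carried along": even for $R\in\Lambda_{j-1}$ with $R^{-1}(R(j))=\{j\}$, the preimage $(R^*)^{-1}$ of the swept-out support changes with the homotopy, and the bound on the trace is a genuine consequence of condition \eqref{I: isotop} of Proposition \ref{P: shift} plus Lemma \ref{L: stratum GP}, not of genericity or smallness of the isotopy.

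So the missing content is twofold: (a) the correct auxiliary lemma lives on $X$, not on $X(k)$—it asserts stratified general position of simplices of one triangulation of $X$ with respect to each other together with a trace bound—and (b) deducing the $X(k)$-level general position from it requires the dimension computation carried out in the paper, which is not routine and is where the hypotheses on $D$ and $\bd D$ actually get used.
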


Assuming this lemma, the proof of Proposition \ref{P: filtered qi} follows as for \cite[Proposition 13.4]{McC} by using the homotopy of the lemma to create the following homologies:
\begin{enumerate} 
\item for a cycle  $D\in G_k^{j-1}$, a homology  to a cycle $C\in G^j_k$ such that the homology is itself  in $G_k^{j-1}$, and 
\item for a chain $D\in G_k^{j-1}$ with $\bd D\in G_k^j$, a relative homology  to a chain in $G_k^j$ whose boundary is also $\bd D$, and such that the homology is itself  in $G_k^{j-1}$. 
\end{enumerate}
We run through the argument because the new shift conventions must be taken into account (though ultimately they don't do any harm to the essence of McClure's argument).

Given a cycle $D$ as stated and the $h$ guaranteed by the Lemma, $S^{-nk}(h\circ i_1)_* \bar \varepsilon_k D$ is in the image of $\bar \varepsilon_k$ by Lemma \ref{L: pres prod}. Since $\bar \varepsilon_k$ is a monomorphism, $C=(\bar\varepsilon_k)^{-1}(S^{-nk}(h\circ i_1)_* \bar \varepsilon_k D)$ is a well-defined cycle. By Lemma \ref{L: homotopy}, $C\in G_k^j$. 

Similarly,  by Lemma \ref{L: pres prod}, $(S^{-nk}h_*)(S^{-nk}\varepsilon) ((\bar \varepsilon_k D)\otimes \iota)$ is in the image of $\bar \varepsilon_k$. Let $E$ be the inverse image of this chain under $\bar\varepsilon_k$. By Lemma \ref{L: homotopy}, $E\in G_k^{j-1}$.

As in \cite{McC}, let $\lambda, \kappa\in C_0(I)$ such that $\bd \iota=\lambda-\kappa$. Then
\begin{align*}
\bve_k(\bd E)&=\bd (S^{-nk}h_*)(S^{-nk}\varepsilon) ((\bar \varepsilon_k D)\otimes \iota)\\
&=(S^{-nk}h_*)(S^{-nk}\varepsilon) \bd((\bar \varepsilon_k D)\otimes \iota)\\
&=(S^{-nk}h_*)(S^{-nk}\varepsilon) (\bd(\bar \varepsilon_k D)\otimes \iota+(-1)^{|\bar \varepsilon_k D|}\bve D\otimes (\lambda-\kappa))\\
&=0+(-1)^{|\bar \varepsilon_k D|}(S^{-nk}h_*)(S^{-nk}\varepsilon)(\bve_k D\otimes (\lambda-\kappa))\\
&=(-1)^{|\bar \varepsilon_k D|} (S^{-nk}(h\circ i_1)_*(\bve_k D)-S^{-nk}(h\circ i_0)_*(\bve_k D))\\
&= (-1)^{|\bar \varepsilon_k D|} \bve_k (C-D). 
\end{align*}
Thus $C$ and $D$ are homologous, since $\bve_k$ is a monomorphism. 

Similarly, to check the second statement, let $D\in G_k^{j-1}$ with $\bd D=C\in G_k^j$, and choose a homotopy $h$ as given by Lemma \ref{L: homotopy}. 

Then $S^{-nk}(h\circ i_1)_* \bve_kD$ and $(S^{-nk}h_*)(S^{-nk}\varepsilon) (\bve_k\bd D\otimes \iota)$ are in the image of $\bve_k$. Let $E_1$ and $E_2$ be the respective inverse images, which are in $G_k^{j}$ by Lemma \ref{L: homotopy}. Now

\begin{align*}
\bve_k(\bd E_2)&=\bd (S^{-nk}h_*)(S^{-nk}\varepsilon) (\bve_k\bd D\otimes \iota)\\
&=(-1)^{|\bve_k \bd D|} (S^{-nk}h_*)(S^{-nk}\varepsilon) (\bve_k\bd D\otimes (\lambda-\kappa))\\
&=(-1)^{|\bve_k \bd D|} (S^{-nk}(h\circ i_1)_*(\bve_k \bd D)-S^{-nk}(h\circ i_0)_*(\bve_k \bd D))\\
&=(-1)^{|\bve_k \bd D|}\bve_k (\bd E_1- C)\\
\end{align*} 
Thus, since $\bve_k$ is a monomorphism, $C=\bd (E_1+(-1)^{|\bve_k \bd D|}E_2)$.
\end{proof}

So we must prove Lemma \ref{L: homotopy}.

\begin{proof}[Proof of Lemma \ref{L: homotopy}]
To simplify the notation, we will assume that $j=k$. The other cases may be obtained by obvious modifications that would require overcomplicating the formulas that follow.

We suppose that $D$ is a chain in $G_k^{k-1}$ and that $\bd D\in  G^k_k$. We must show that there is a $k$th factor homotopy $h:X\times I\to X$ such that 
\begin{enumerate}
\item $h\circ i_0$ is the identity (where $i_0$ is the inclusion $X=X\times 0\into X\times I$),

\item $S^{-nk}(h\circ i_1)_*(\bar \varepsilon_kD)$, $S^{-nk}(h\circ i_1)_*(\bar \varepsilon_k(\bd D))$, and $S^{-n}h_*( S^{-nk}\varepsilon( \bve_k\bd D\otimes \iota))$ are in  stratified general position with respect to $R^*$ for all $R:\bar k\onto \bar k'$ (where $i_1$ is the inclusion $X=X\times 1\into X\times I$ and $\iota$ is the canonical chain in $C_1(I)$), and

\item $S^{-nk}h_*(S^{-nk} \varepsilon(\bve_k D\otimes \iota))$ is in stratified general position with respect to $R^*$ for all $R\in \Lambda_{k-1}$.
\end{enumerate}

We choose a triangulation $K$ of $X$ such that $D\in (c_*K)^{\otimes k}$. We let $\tau_1,\ldots, \tau_{\omega}$ be the simplices of $K$ with fixed, arbitrary orientations. Then we can write $$D=\sum_A n_A S^{-n}\tau_{a_1}\otimes \cdots\otimes S^{-n}\tau_{a_k},$$ where the sum runs over multi-indices $A=(a_1,\cdots, a_k)\in \{1,\ldots,\omega\}^k$. Similarly, $$\bd D=\sum_A n'_A S^{-n}\tau_{a_1}\otimes \cdots\otimes S^{-n}\tau_{a_k}.$$

To define the desired homotopy, we utilize the following proposition, which generalizes (and slightly strengthens) McClure's \cite[Proposition 14.6]{McC}. Although the proposition is analogous, it is the proof of this proposition for which we need most greatly differ from \cite{McC}, as we have not been able to construct a proof using McClure's methods.

\begin{proposition}\label{P: shift}
Let $X$ be a  stratified PL pseudomanifold of dimension $n$, and let $K$ be a triangulation of $X$. Then there is a stratum-preserving PL isotopy $\phi:X\times I\to X$ such that 
\begin{enumerate}
\item $\phi|_{X\times 0}$ is the identity,

\item if $\sigma$ and $\tau$ are simplices of $K$, then $\phi(\sigma,1)$ and $\tau$ are in stratified general position, i.e. $$\dim(\phi(\sigma,1)\cap \tau\cap X_\kappa)\leq \dim(\sigma\cap X_\kappa)+\dim(\tau\cap X_\kappa)-\kappa$$
for all $\kappa$, $0\leq \kappa\leq n$
(note that since $\phi$ is an isotopy, $\dim (\sigma\cap X_\kappa)=\dim(\phi(\sigma,1)\cap X_\kappa)$), and

\item \label{I: isotop} if $\sigma$ and $\tau$ are simplices of $K$, then for all $\kappa$, $0\leq \kappa\leq n$, $$\dim(\phi((\sigma\cap X_\kappa)\times I)\cap \tau)\leq \max(\dim(\supp(\phi((\sigma\cap X_\kappa)\times I)))+\dim(\tau\cap X_\kappa)-\kappa,\dim(\sigma\cap\tau\cap X_\kappa)).$$ 
\end{enumerate}
\end{proposition}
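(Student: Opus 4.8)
The plan is to prove Proposition \ref{P: shift} by induction on the depth of the stratification, following the general ``push one factor into general position'' philosophy but handling the stratified case directly rather than via Euclidean covers as in \cite{McC}. The base case is McClure's result (or the classical PL general position theorem) on a PL manifold: a generic small PL ambient isotopy puts a finite family of simplices into general position with respect to another finite family, and one also controls the trace of the isotopy as in item (\ref{I: isotop}). For the inductive step, suppose the proposition holds for stratified PL pseudomanifolds of smaller depth. Work on $X$ with top stratum $X_n = X - \Sigma$, which is an open dense PL manifold, and the singular set $\Sigma = X^{n-2}$, which (with its induced filtration, refined by $K$) is a stratified PL pseudomanifold of strictly smaller depth. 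The idea is to first fix things up on $\Sigma$ using the inductive hypothesis, then extend to a stratum-preserving isotopy of all of $X$ and finally perturb once more within the open manifold $X_n$.

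The key steps, in order, are as follows. First, restrict $K$ to $\Sigma$ to get a triangulation $K_\Sigma$; by the inductive hypothesis there is a stratum-preserving PL isotopy $\phi^\Sigma$ of $\Sigma$ satisfying (1)--(3) for the simplices $\sigma \cap \Sigma$, $\tau \cap \Sigma$ (for all simplices $\sigma,\tau$ of $K$). Second, extend $\phi^\Sigma$ to a stratum-preserving PL isotopy $\phi^{(1)}$ of all of $X$: this uses a PL collaring / regular neighborhood of $\Sigma$ in $X$ (available because $X$ is a PL pseudomanifold with the local cone structure, so $\Sigma$ has a mapping-cylinder-type neighborhood whose fibers are cones on links), tapering the isotopy off to the identity away from $\Sigma$ while keeping it stratum-preserving; since on each $\R^i \times c(L)$ the isotopy can be taken to preserve the cone coordinate, each track stays in a single stratum. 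Third, working now in the open PL manifold $X_n$, apply the classical PL general position theorem (as in McClure's Proposition 14.6, which is unstratified) to perturb the simplices $\phi^{(1)}(\sigma,1) \cap X_n$ against the $\tau \cap X_n$ by a further ambient isotopy $\phi^{(2)}$ supported in a small neighborhood of the relevant simplices inside $X_n$ — crucially keeping this support away from $\Sigma$ so that the already-arranged general position along the lower strata is undisturbed. Compose $\phi = \phi^{(2)} * \phi^{(1)}$ (concatenation of isotopies); it is stratum-preserving and satisfies (1).

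For (2), one checks the dimension inequality stratum by stratum: on $X_n$ it holds by the choice of $\phi^{(2)}$; on a lower stratum $X_\kappa \subset \Sigma$ it holds because on $\Sigma$ the isotopy is $\phi^\Sigma$ (the later perturbation $\phi^{(2)}$ is trivial near $\Sigma$) and we invoked the inductive hypothesis there. For (3), the trace estimate, one argues similarly: the trace of $\phi$ over a cell $\sigma \cap X_\kappa$ decomposes into the trace of $\phi^{(1)}$ (controlled on $\Sigma$ by induction, and on $X_n$ controlled by the standard trace bound for a small PL isotopy) followed by the trace of $\phi^{(2)}$ (supported in $X_n$, controlled by the classical PL trace-of-general-position estimate); taking the max of the two contributions, together with the $\dim(\sigma\cap\tau\cap X_\kappa)$ term to absorb the ``$t=0$'' part where nothing has moved yet, gives the stated bound. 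One should take the whole isotopy ``small'' in the sense that each simplex moves only within an open star, so that no new incidences are created beyond those forced by general position, which is what makes the right-hand side of (3) tight.

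The main obstacle I expect is the second step: producing the stratum-preserving \emph{extension} of the isotopy from $\Sigma$ to $X$ while simultaneously (a) keeping it PL and an honest isotopy (not merely a homotopy), (b) making it the identity outside a controlled neighborhood of $\Sigma$ so that the third-step perturbation inside $X_n$ can be performed independently, and (c) preserving the stratification of $X(k)$-relevant data — i.e. ensuring the extension genuinely respects the local cone structure $\R^i \times c(L)$ along every stratum. This is exactly the place where McClure's Euclidean-ball covering argument breaks down and must be replaced; the right tool is a PL version of the local triviality of the stratification (the trivializing homeomorphisms $\phi$ in the definition of a stratified PL pseudomanifold) together with a PL partition-of-unity / collaring argument to glue the locally-defined extensions, invoking McCrory's stratified general position techniques \cite{Mc78} as adapted to pseudomanifolds. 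Making the gluing compatible across overlapping distinguished neighborhoods — so that the resulting global map is still an isotopy and still stratum-preserving — is the technical heart of the argument, and is presumably where the bulk of the work in the author's proof of Proposition \ref{P: shift} will go.
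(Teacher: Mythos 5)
Your proposal takes roughly the same bottom-up (deep strata first, then outward) order as the paper, but the key perturbation step contains a genuine error. In your third step you insist that the ambient isotopy $\phi^{(2)}$ achieving general position inside the open manifold $X_n$ be \emph{supported away from} $\Sigma$. This cannot work. Stratified general position on the top stratum $X_n$ is the dimension bound $\dim(\phi(\sigma,1)\cap\tau\cap X_n)\leq\dim(\sigma\cap X_n)+\dim(\tau\cap X_n)-n$, and this must hold for the entire intersection in $X_n$, including points arbitrarily close to $\Sigma$. Concretely, if $\sigma$ and $\tau$ are two $j$-simplices of $K$ with $j<n-1$ sharing a common $(j-1)$-face whose interior meets $X_n$ near $\Sigma$, the bound $j-1\leq 2j-n$ fails there, so $\sigma$ must actually be moved off $\tau$ in a region that touches $\Sigma$. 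An isotopy that is the identity on a neighborhood of $\Sigma$ cannot do this, so general position is simply not achieved; it also does not suffice to push things smaller and smaller as you approach $\Sigma$, because the dimension of the untouched part of the intersection is what counts.

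What is actually needed — and what the paper takes from McCrory's $(j,\kappa)$-shift construction — is an isotopy that \emph{fixes $\Sigma=X^{n-1}$ pointwise} while nonetheless moving points of $X_n$ arbitrarily close to $\Sigma$. McCrory builds such shifts by joining Zeeman's shift inside the stratum $X_\kappa$ with the identity map of the link $L$, via the local product structure $\R^\kappa\times c(L)$ of a distinguished neighborhood; the resulting local isotopy is stratum-preserving, restricts to the identity on $X^{\kappa-1}$, and yet does not die off near $X^{\kappa-1}$. This distinction — ``fixes $X^{\kappa-1}$'' versus ``supported away from $X^{\kappa-1}$'' — is precisely where your argument breaks down, and it is arguably the entire reason the Euclidean-ball approach of \cite{McC} fails on stratified spaces, so it is not something you can relegate to ``the technical heart'' and assume away. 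Two further, smaller issues: McClure's Proposition 14.6 is stated for compact PL manifolds, whereas $X_n$ is an open manifold whose closure meets $\Sigma$, so you cannot simply invoke it; and your treatment of condition (\ref{I: isotop}) (``take the max of the two trace contributions'') is asserted rather than argued — the paper has to run an entirely separate general position argument on $X\times I$ relative to $X\times\{0,1\}$ to obtain that estimate.
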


The proof of the proposition is deferred to below.

Let $h:X(k)\times I\to I$ be the $k$th factor homotopy obtained from the isotopy $\phi$ of the proposition. Note that since $\phi$ was stratum-preserving, so is $h$. Let $R:\bar k\to \bar k'$ be any surjection. We must verify that the various chains described above are in the appropriate general position with respect to $R^*$. We will show explicitly that $S^{-nk}h_*(S^{-nk}\varepsilon( \bve_k\bd D\otimes \iota))$ is in stratified general position with respect to $R^*$, the other proofs being similar. 

To simplify the notation somewhat in what follows, we will employ the following substitution in order to remove the shifts. Notice that, as far as supports of chains are concerned, the support of $S^{-nk}h_*(S^{-nk}\varepsilon( \bve_k\bd D\otimes \iota))=S^{-nk}h_*(S^{-nk}\varepsilon( \bve_k\sum_{A\mid n'_A\neq 0} n'_A S^{-n}\tau_{a_1}\otimes \cdots\otimes S^{-n}\tau_{a_k}\otimes \iota))$ is precisely the same as that of $h_*(\varepsilon_{k+1}( \sum_{A\mid n'_A\neq 0} n'_A S^{-n}\tau_{a_1}\otimes \cdots\otimes S^{-n}\tau_{a_k}\otimes \iota))\in C_*(X(k))$. This is because we are done taking boundaries at this point, so the various signs that come into play from the dimension shifts no longer need to be taken into account. The only thing that matters at this point are which terms are non-zero, and that is already settled. Thus for the purpose of checking the dimensions of intersections in order to make sure that stratified general position is satisfied (which is all that remains to do in this section), we are free to replace $\bd D$ with $\bd \bar D=\sum_{A\mid n'_A\neq 0} n'_A \tau_{a_1}\otimes \cdots\otimes \tau_{a_k}$ and to proceed using $\varepsilon$ and $h_*$ instead of their shifted versions. We make this change now.

We must consider what happens on each stratum $Z=X_{d_1}\times\cdots\times X_{d_k}$ of $X(k)$. As we have previously noted, if $R(a)=R(b)$ but $d_a\neq d_b$ for any pair $a,b\in \bar k$, then $R^*(X(k'))$ does not intersect this stratum and general position for this stratum is automatic. Therefore, we may confine ourselves to strata $X_{d_1}\times\cdots\times X_{d_k}$ of $X(k)$ for which $R(a)=R(b)$ implies $d_a=d_b$.

Now, $\supp(h_*(\varepsilon(\bd \bar D\otimes \iota)))$ is contained in the union over all $A$ such that $n'_A\neq 0$ of $$\tau_{a_1}\times \cdots \times \tau_{a_{k-1}}\times \phi(\tau_{a_k}\times I),$$ and so, letting $R^{-1}(R(k))=Q$, we see that $\supp(h_*(\varepsilon(\bd \bar D\otimes \iota)))\cap \im(R^*)$ is contained in the union over all $A$ such that $n'_A\neq 0$ of 
$$\left(\prod_{j\neq R(k)} \bigcap_{i\in R^{-1}(j)}\tau_{a_i}\right)\times \left(\phi(\tau_{a_k}\times I)\cap \bigcap_{i\in Q-\{k\}}\tau_{a_i}\right).$$
It follows that, on our stratum $Z$,
\begin{multline*}
\supp (h_*(\varepsilon(\bd \bar D\otimes \iota)))\cap \im(R^*)\cap Z\\
\subset \left(\prod_{j\neq R(k)} \bigcap_{i\in R^{-1}(j)}(\tau_{a_i}\cap X_{d_i})\right)\times \left((\phi(\tau_{a_k}\times I)\cap X_{d_k})\cap \bigcap_{i\in Q-\{k\}}(\tau_{a_i}\cap X_{d_i})\right).
\end{multline*}

Thus 
\begin{multline}\label{E: condition}
\dim(\supp(h_*(\varepsilon(\bd \bar D\otimes \iota)))\cap \im(R^*)\cap Z)\\
\leq\max_{n_A'\neq 0}\left(\sum_{j\neq R(k)} \dim \left(\bigcap_{i\in R^{-1}(j)}(\tau_{a_i}\cap X_{d_i})\right)\right. \\
\left.+ \dim\left((\phi(\tau_{a_k}\times I)\cap X_{d_k})\cap \bigcap_{i\in Q-\{k\}}\tau_{a_i}\cap X_{d_i}\right)\right).
\end{multline}

If $a_k$ is such that  $\dim(\phi(\tau_{a_k}\times I))<\dim(\tau_{a_k})+1$, then for any $A$ with $a_k$ as its final entry, we will have $h_*(\varepsilon(\tau_{a_1}\otimes\cdots\otimes \tau_{a_k}))=0$, since $h_*(\varepsilon(\tau_{a_1}\otimes\cdots\otimes \tau_{a_k}))$ must be a chain of dimension $1+\sum_{i=1}^k\dim(\tau_{a_i})$, while $\dim(h(\tau_{a_1}\times\cdots\times \tau_{a_k}))=\sum_{i=1}^{k-1}\dim(\tau_{a_i})+\dim(\phi(\tau_{a_k}\times I))$. In this case, $h_*(\varepsilon(\tau_{a_1}\otimes\cdots\otimes \tau_{a_k}))$ must trivially satisfy any general position requirements. So we may assume for the rest of the argument that $\dim(\phi(\tau_{a_k}\times I))=\dim(\tau_{a_k})+1$. 

By \eqref{E: gen pos}, it suffices to show for each remaining multi-index (those such that $n'_A\neq 0$ and $\dim(\phi(\tau_{a_k}\times I))=\dim(\tau_{a_k})+1$) that the righthand side of the inequality \eqref{E: condition} is
\begin{equation}\label{E: suffice}
\leq \dim(\supp(h_*(\varepsilon(\tau_{a_1}\otimes \cdots \tau_{a_k}\otimes \iota)))\cap Z)+\sum_{i=1}^{k'}d_{R^{-1}(i)}-\sum_{i=1}^kd_i.
\end{equation}
Note that $\supp(h_*\varepsilon(\bd \bar D\otimes \iota))\cap Z=\supp(h_*( (|\varepsilon_k\bd \bar D| \cap Z)\times I))$ since $h$ is stratum-preserving.

The following lemma will be used to complete the proof:

\begin{lemma}\label{L: stratum GP}
\begin{enumerate} 
\item For each $j\notin R(k)$, $$\dim \left(\bigcap_{i\in R^{-1}(j)}\tau_{a_i}\cap X_{d_{R^{-1}(j)}}\right)\leq (1-|R^{-1}(j)|)d_{R^{-1}(j)}+ \sum_{i\in R^{-1}(j)} \dim(\tau_{a_i}\cap X_{d_{R^{-1}(j)}})$$ 
(recall that $d_{R^{-1}(j)}$ is well-defined for the stratum $Z$)

\item 
\begin{multline*}\dim \left((\phi(\tau_{a_k}\times I)\cap X_{d_k})\cap \bigcap_{i\in Q-\{k\}}(\tau_{a_i}\cap X_{d_{R^{-1}(k)}})\right)\\
\leq (1-|Q|)d_{k}+ \dim(\supp(h_*(\varepsilon(\tau_{a_k}\otimes \iota)))\cap X_{d_k})\\\qquad +\sum_{i\in Q-\{k\}} \dim(\tau_{a_i}\cap X_{k})
\end{multline*}
\end{enumerate}
\end{lemma}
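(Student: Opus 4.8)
The two parts are really the same statement applied in slightly different forms, and both should follow by iterating the stratified general position estimate \eqref{E: gen pos} (via the isotopy $\phi$ of Proposition \ref{P: shift}) one factor at a time. First I would dispose of part (1), which does not involve the homotopy trace at all. Write $R^{-1}(j)=\{i_1,\dots,i_r\}$ with $r=|R^{-1}(j)|$, and set $d:=d_{R^{-1}(j)}$ (well-defined on $Z$). The plan is to intersect the simplices $\tau_{a_{i_1}},\dots,\tau_{a_{i_r}}$ inside the manifold stratum $X_d$ one at a time: by part (2) of Proposition \ref{P: shift} (applied to the successive partial intersections, which are again PL subsets of $X$, not just simplices — so strictly one should first note that the estimate of Proposition \ref{P: shift}(2) extends from pairs of simplices to a simplex versus an arbitrary PL subset carried by a subcomplex of a subdivision, which is immediate since dimension and the stratum intersections are computed simplexwise), each intersection with a new $\tau_{a_{i_\ell}}\cap X_d$ costs us $d$ in the ambient-dimension bookkeeping. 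After $r-1$ such steps we pick up $-(r-1)d = (1-r)d$, plus the sum $\sum_{\ell} \dim(\tau_{a_{i_\ell}}\cap X_d)$, which is exactly the claimed bound. (One must also invoke that $\phi(\sigma,1)$ and $\tau$ being in stratified general position is preserved under the reindexing/relabeling, but since $\phi$ is applied only in the $k$th factor and the $j\neq R(k)$ factors involve no $\phi$, for part (1) we just use that a generic triangulation already has its simplices in stratified general position after applying $\phi$ — which is the content of Proposition \ref{P: shift}(2).)

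For part (2), the same inductive intersection argument applies, except that now one of the "simplices" is replaced by the trace chain $\supp(h_*(\varepsilon(\tau_{a_k}\otimes\iota))) = \phi(\tau_{a_k}\times I)$. Here I would use Proposition \ref{P: shift}(\ref{I: isotop}) for the first intersection — the one involving $\phi(\tau_{a_k}\times I)$ against some $\tau_{a_i}$, $i\in Q-\{k\}$ — which gives the estimate with the $\max(\cdots)$; and Proposition \ref{P: shift}(2) for all subsequent intersections among the ordinary simplices. Tracking constants: there are $|Q|-1$ simplices $\tau_{a_i}$, $i\in Q-\{k\}$, to intersect in, costing $-(|Q|-1)d_k = (1-|Q|)d_k$; the base dimension is $\dim(\supp(h_*(\varepsilon(\tau_{a_k}\otimes\iota)))\cap X_{d_k})$; and we add $\sum_{i\in Q-\{k\}}\dim(\tau_{a_i}\cap X_{d_k})$. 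That is precisely the asserted inequality. The one subtlety is handling the $\max$ term in Proposition \ref{P: shift}(\ref{I: isotop}): on the branch where $\dim(\phi((\sigma\cap X_\kappa)\times I)\cap\tau) \le \dim(\sigma\cap\tau\cap X_\kappa)$, the bound is even better than what general position alone would give, and since $\dim(\sigma\cap\tau\cap X_\kappa)\le \dim(\supp(h_*(\varepsilon(\tau_{a_k}\otimes\iota)))\cap X_{d_k}) + \dim(\tau\cap X_{d_k}) - d_k$ whenever the relevant terms are genuinely $(\geq 0)$-dimensional (using $\dim(\phi(\tau_{a_k}\times I)) = \dim(\tau_{a_k})+1$, which we have reduced to), this branch is absorbed into the claimed bound; I would spell this comparison out carefully since it is the only place the $\max$ does real work.

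I expect the main obstacle to be bookkeeping rather than conceptual: making sure that when one iterates Proposition \ref{P: shift}(2) to intersect more than two PL sets, the "general position" hypothesis is genuinely available at each stage — i.e. that the partial intersections of the $\tau_{a_i}$ (and of $\phi(\tau_{a_k}\times I)$ with them) remain in stratified general position with respect to the next simplex. For part (1) this is clean because after applying the single isotopy $\phi$ only in the last factor, the simplices in the other factors are untouched and Proposition \ref{P: shift}(2) already put every pair of simplices of $K$ into stratified general position; successive intersections of subcomplexes with a simplex in general position stay in general position by a standard PL argument (dimension counts on each stratum are additive in the right way). For part (2), the only new ingredient is the first step with $\phi(\tau_{a_k}\times I)$, controlled by Proposition \ref{P: shift}(\ref{I: isotop}), after which we are again intersecting ordinary simplices in stratified general position. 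So the proof is: reduce to the two-set estimates of Proposition \ref{P: shift}, then induct, carefully accumulating the $-d$ (resp. $-d_k$) terms and handling the $\max$ branch once.
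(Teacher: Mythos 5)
Your plan for part (1) is built on the wrong mechanism, and the gap is not a bookkeeping issue. You want to obtain the iterated dimension bound by applying Proposition \ref{P: shift}(2) one simplex at a time, writing that ``after applying $\phi$'' the simplices of $K$ are in stratified general position. But for $j\neq R(k)$ the simplices $\tau_{a_i}$ with $i\in R^{-1}(j)$ sit in factors other than the $k$th one, and $h$ is a $k$th-factor homotopy: $\phi$ is never applied to those factors. They are simply simplices of the fixed triangulation $K$, unmoved. Proposition \ref{P: shift}(2) asserts that $\phi(\sigma,1)$ and $\tau$ are in stratified general position, \emph{not} that $\sigma$ and $\tau$ are; two unmoved simplices of $K$ are almost never in stratified general position (adjacent simplices meet in common faces of maximal dimension, and nothing prevents repeated indices $a_i=a_{i'}$, in which case $\tau_{a_i}=\tau_{a_{i'}}$, whose self-intersection defeats any such bound). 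So the very first step of your iteration fails. Even granting pairwise general position, iterating to control a triple intersection would require a higher-order general position statement, which pairwise statements do not supply.

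What actually forces the estimate in part (1) is the \emph{hypothesis} of Lemma \ref{L: homotopy}: $\bd D\in G^j_k$ (with $j=k$ in the simplified notation), i.e.\ $\bve_k(\bd D)$ is already in stratified general position with respect to $R^*$ for every surjection $R$, before any isotopy is performed. The paper's argument introduces an auxiliary surjection $\bar R:\bar k\to\bar k''$ that collapses exactly $R^{-1}(j)$ and is bijective elsewhere, recognizes the intersection in question as $\supp(\varepsilon_k(E))\cap\im(\bar R^*)\cap Z$, and applies the defining inequality \eqref{E: gen pos} for stratified general position a single time; the $(1-|R^{-1}(j)|)d_{R^{-1}(j)}$ term drops out in one computation with no iteration and no use of $\phi$. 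Part (2) first reuses this same argument (again from $\bd D\in G^j_k$) on the indices $i\in Q-\{k\}$ to obtain the $(2-|Q|)d_k$ bound, and only then applies Proposition \ref{P: shift}(\ref{I: isotop}) exactly once to bring in $\phi(\tau_{a_k}\times I)$, with the $\max$ case split that you correctly identified. Your reading of the role of Proposition \ref{P: shift}(\ref{I: isotop}) in part (2) is essentially right, but the remaining intersections in both parts must come from the stratified general position already enjoyed by $\bd\bar D$, not from the isotopy.
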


To see that this lemma suffices to finish the proof of Lemma \ref{L: homotopy}, we compute
\begin{align*}
\sum_{j\neq R(k)} &\dim \left(\bigcap_{i\in R^{-1}(j)}(\tau_{a_i}\cap X_{d_{R^{-1}(j)}})\right) + \dim\left((\phi(\tau_{a_k}\times I)\cap X_{d_k})\cap \bigcap_{i\in Q-\{k\}}(\tau_{a_i}\cap X_{d_k})\right)\\
&\leq \sum_{j\neq R(k)} \left((1-|R^{-1}(j)|)d_{R^{-1}(j)}+\sum_{i\in R^{-1}(j)} \dim(\tau_{a_i}\cap X_{d_{R^{-1}(j)}}) \right) +(1-|Q|)d_{k}\\&\qquad+\dim(\supp(h_*(\varepsilon(\tau_{a_k}\otimes \iota)))\cap X_{d_k}) + \sum_{i\in Q-\{k\}} \dim(\tau_{a_i}\cap X_{d_{R^{-1}(k)}}) \\
&= \sum_{j=1}^{k'}(1-|R^{-1}(j)|)d_{R^{-1}(j)}+\dim(\supp(h_*(\varepsilon(\tau_{a_k}\otimes \iota)))\cap X_{d_k})+ \sum_{i\neq k}\dim(\tau_{a_i}\cap X_{d_i})
\end{align*}

But $$\sum_{j=1}^{k'}|R^{-1}(j)|d_{R^{-1}(j)}=\dim Z=\sum_{i=1}^kd_i,$$ and 

$$\dim(\supp(h_*(\varepsilon(\tau_{a_k}\otimes \iota)))\cap X_{d_k})+ \sum_{i\neq k}\dim(\tau_{a_i}\cap X_{d_k})=\dim(\supp(h_*(\varepsilon(\tau_{a_1}\otimes \cdots \tau_{a_k}\otimes \iota)))\cap Z).$$
Thus the desired inequality \eqref{E: suffice} holds. 
\end{proof}

It remains to prove Proposition \ref{P: shift} and Lemma \ref{L: stratum GP}.

\begin{proof}[Proof of Lemma \ref{L: stratum GP}]
\begin{enumerate}
\item The proof is essentially the same as that of \cite[Lemma 14.5]{McC}: We continue to work with the stratum $Z$ and with a fixed $R$. 
Let $E=\tau_{a_1}\otimes \cdots\otimes \tau_{a_k}$.  Choose $j\in R^{-1}(k)$, and 
let $\bar R:\bar k\to \bar k''$ be any surjection that takes $R^{-1}(j)$ to $1$ and is bijective on $\bar k-R^{-1}(j)$.  Then $$\bigcap_{i\in R^{-1}(j)}(\tau_{a_i}\cap X_{d_{R^{-1}(j)}})=\bigcap_{i\in \bar R^{-1}(j)}(\tau_{a_i}\cap X_{d_{\bar R^{-1}(1)}}).$$ 

Now on the one hand, 
\begin{align*}
\dim&\left(\bigcap_{i\in \bar R^{-1}(1)}(\tau_{a_i}\cap X_{d_{R^{-1}(j)}})\times \prod_{i\notin \bar R^{-1}(1)} (\tau_{a_i}\cap X_{d_{R^{-1}(j)}})\right)\\
&=\dim\left(\bigcap_{i\in \bar R^{-1}(1)}(\tau_{a_i}\cap X_{d_{R^{-1}(j)}})\right)+\sum_{i\notin \bar R^{-1}(1)}\dim(\tau_{a_i}\cap X_{d_i}),
\end{align*}
while, on the other hand, since $\varepsilon_k(E)$ is in stratified general position with respect to any $R$ (by our standing assumptions),
\begin{align*}
\dim &\left(\bigcap_{i\in \bar R^{-1}(1)}(\tau_{a_i}\cap X_{d_{R^{-1}(j)}})\times \prod_{i\notin \bar R^{-1}(1)} (\tau_{a_i}\cap X_{d_i})\right)\\
&=\dim(\supp(\varepsilon_k(E))\cap \im(\bar R^*)\cap Z)\\
&\leq  \dim(\supp(\varepsilon_k(E))\cap Z)+ \sum_{i=1}^{k''}  d_{\bar R^{-1}(i)} -\sum_{i=1}^kd_i \qquad\qquad (\text{by stratified general position})\\
&= \dim(\supp(\varepsilon_k(E))\cap Z)+ d_{R^{-1}(j)}+\sum_{u\notin R^{-1}(j)} d_{u} -\sum_{i=1}^kd_i  \qquad\qquad(\text{by our choice of $\bar R$})\\
&=\dim(\supp(\varepsilon_k(E))\cap Z)+ d_{R^{-1}(j)} -\sum_{u\in R^{-1}(j)}d_u \\
&=\dim(\supp(\varepsilon_k(E))\cap Z)+ d_{R^{-1}(j)} (1- |R^{-1}(j)|)
\end{align*} 

Since $\dim(\supp(\varepsilon_k(E))\cap Z)=\sum_{i=1}^k\dim(\tau_{a_i}\cap X_{d_{i}})$, these two equations yield the result of the lemma.

\item By the same proof as in the first part of the lemma, 
$$\dim \left(\bigcap_{\overset{i\in Q}{i\neq k}}\tau_{a_i}\cap X_{d_{R^{-1}(k)}}\right)\leq (2-|Q|)d_{k}+ \sum_{i\in Q-\{k\}} \dim(\tau_{a_i}\cap X_{d_{k}}).$$ 
Now, by the conclusion of Proposition \ref{P: shift}, we can assume for any simplex $\eta$ in  $X_{d_{k}}$ (in particular for any simplex in $\bigcap_{i\in Q-\{k\}}(\tau_{a_i}\cap X_{d_{i}})$ that 
\begin{align*}
\dim&(\phi((\tau_{a_k}\cap X_{d_{k}})\times I))\cap \eta)\\
&\leq \max(\dim(\supp(\phi((\tau_{a_k}\cap X_{d_{k}})\times I)))+\dim(\eta\cap X_{d_{k}})-d_{k},\\
&\qquad\qquad\dim(\tau_{a_k}\cap\eta\cap X_{d_{k}})).
\end{align*}

If $\dim(\supp(\phi((\tau_{a_k}\cap X_{d_{k}})\times I)))+\dim(\eta\cap X_{d_{k}})-d_{k}$ is the larger number, then
\begin{align*}
\dim &\left((\phi(\tau_{a_k}\times I)\cap X_{d_k})\cap \bigcap_{i\in Q-\{k\}}(\tau_{a_i}\cap X_{d_{i}})\right)\\
&\leq 
\dim(\supp(\phi((\tau_{a_k}\cap X_{d_{R^{-1}(k)}})\times I)))
+ (2-|Q|)d_{k}\\
&\qquad+ \sum_{i\in R^{-1}(k)-\{k\}} (\dim(\tau_{a_i}\cap X_{d_{k}}))  -d_{k}\\
&\leq (1-|Q|)d_{k}+ \dim(\supp(h_*(\varepsilon(\tau_{a_k}\otimes \iota)))\cap X_{d_k})\\&\qquad +\sum_{i\in Q-\{k\}} \dim(\tau_{a_i}\cap X_{d_{k}})
\end{align*}
\end{enumerate}

If $\dim(\tau_{a_k}\cap\eta\cap X_{d_{k}})$ is the larger number, then

\begin{align*}
\dim &\left((\phi(\tau_{a_k}\times I)\cap X_{d_k})\cap \bigcap_{i\in Q-\{k\}}(\tau_{a_i}\cap X_{d_{k}})\right)\\
&\leq \dim\left(\tau_{a_k}\cap\bigcap_{i\in Q-\{k\}}(\tau_{a_i}\cap X_{d_{k}})\right)
\end{align*}
Once again, since $\bd \bar D$ was initially assumed to be in general position with respect to any $R$, it follows as in the proof of the first part of the lemma that this is 
$$\leq (1-|Q|)d_{k}+ \sum_{i\in Q} \dim(\tau_{a_i}\cap X_{d_{k}}),$$
which is certainly
\begin{align*}
&\leq (1-|Q|)d_{k}+ \dim(\supp(h_*(\varepsilon(\tau_{a_k}\otimes \iota)))\cap X_{d_k})\\&\qquad +\sum_{i\in Q-\{k\}} \dim(\tau_{a_i}\cap X_{d_{k}})
\end{align*}
\end{proof}

\begin{proof}[Proof of Proposition \ref{P: shift}]
The proof will make use of McCrory's proof of his stratified general position theorem \cite{Mc78}. This in turn makes use of the general position constructions for manifolds presented by Zeeman in \cite[Chapter 6]{Z}. McCrory shows that given a ``stratified polyhedron'' (the definition of which includes our stratified PL pseudomanifold $X$), and closed subpolyhedra $A,B,C$ such that $B\supset C$, then there exists an $\epsilon$-PL isotopy $H:X\times I\to X$ such that $H(c,t)=c$ for all $c\in C, t\in I$ and $H(B-C,1)$ and $A$ are in stratified general position, i.e. $(B-C)\cap X_i$ and $A\cap X_i$ are in general position in $X_i$ for each $i$-manifold $X_i$. 

The construction of McCrory's isotopy is by a double sequence of local ``$(j,\kappa)$-shifts'' that works up through the strata and down through the simplices of each stratum. In other words, one constructs a sequence of isotopies $G^1,\ldots, G^n$ such that $G^\kappa$ fixes $X^{\kappa-1}$ (and $C$), and each $G^\kappa$ is, in turn, composed of a sequence of isotopies $F^{j,\kappa}$, where the parameter $j$ descends through the dimensions of simplices\footnotemark of $X^\kappa-X^{\kappa-1}$.\footnotetext{By a simplex of $U-V$, we mean a simplex of $U$ that is not contained in $V$.} Each $F^{j,\kappa}$ consists of simultaneous disjoint local isotopies of neighborhoods in $X$ of the $j$-simplices of $X^{\kappa}-X^{\kappa-1}$. These neighborhoods may meet along their boundaries, but the local isotopies are fixed on these boundaries, so there is no problem with performing all of the local isotopies of $F^{j,\kappa}$ simultaneously. Each such $(j,\kappa)$-shift is constructed by applying in $X_\kappa$ Zeeman's shift construction for manifolds and then joining this Zeeman shift with the identity map on the link $L$ of the stratum (to be completely precise, one must also take into account the standard stratified homeomorphism between local neighborhoods of simplices and standard distinguished neighborhoods of the form $B^\kappa\times \bar cL$ - see \cite{Mc78}). By the arguments presented by McCrory \cite{Mc78} and Zeeman \cite{Z}, the end result of this sequence of isotopies puts $B-C$ in stratified general position with respect to $A$.

Our isotopy will be constructed similarly, by performing  $(j,\kappa)$-shifts for all possible simplices. For this purpose, there are two important points to note:

\begin{itemize}
\item As McCrory notes, the set $A$ comes into his construction only in that the triangulation $K$ is chosen so that $A$ is a subcomplex. Thus one should expect that the choice of $A$ is irrelevant beyond this, and thus by performing the appropriate $(j,\kappa)$-shifts, one can put any subcomplex of $K$ into general position with respect to all other subcomplexes of $K$ simultaneously.

In fact, it is not completely true that this is the only way that $A$ comes into the definition of McCrory's shift. In Zeeman's shift construction, which is the cornerstone of McCrory's, $A$ (there played by the symbol $Y$), also enters into which simplices have their neighborhoods shifted and into the definition of the shift. The first issue - only shifting simplices that actually intersect $A$ - simply limits the number of local isotopies being performed to avoid unnecessary ones, but performing extra local isotopies does no harm. As for how the shifts are actually defined, the only fact about $A$ that is significant in the definition of Zeeman's shift is that the intersection of $A$ with the link of the simplex $\eta$ whose neighborhood is to be shifted should not be the entire linking sphere (Zeeman works exclusively in the realm of manifolds). This allows one to construct a homeomorphism between the standard simplex (of the appropriate dimension) and the transverse disk to $\eta$ such that the intersection of $A$ with the link of $\eta$ gets mapped into a single face of the standard simplex. 

In our case (or McCrory's), if we wish to perform a $(j,\kappa)$-shift in a neighborhood of a $j$-simplex $\eta$ of $X_\kappa$ and the intersection of $A$ with the link of $\eta$ in $X_\kappa$ is the entire $\kappa-j-1$ dimensional linking sphere, then in fact an entire star neighborhood of $\eta$ in $X_\kappa$ will be contained in $A$, and general position with respect to $A\cap X_\kappa$ in $X_\kappa$ will automatically be satisfied. Thus in order to define each $(j,\kappa)$-shift for  each fixed $\kappa$, it will suffice to allow the $\kappa-1$ skeleton of $X^\kappa$ (in the relevant triangulation at the time) to play the role of $A$ for the purpose of applying Zeeman's construction to define the local shifts.  

\item  As for $A$, $B$ also comes into the construction of McCrory's isotopy both as determining which simplices should have their neighborhoods shifted and in the determination of the actual shifts. Once again, shifts in the McCrory construction are limited to those surrounding simplices that lie in $B$, but once again, this is an unnecessary limitation - making additional shifts does no harm. And once again, it is necessary for constructing Zeeman's shift within McCrory's that the intersection of $B$ (there called $X$) with the link in $X_\kappa$ of the simplex $\eta$ whose neighborhood we shift should not be the entire $\kappa-j-1$ sphere, and the reasons for this are identical. But once again, for the purpose of defining Zeeman's shift within the stratum $X_\kappa$, we may let the $\kappa-1$ skeleton of $X^\kappa$ play the role of Zeeman's $X$ (McCrory's $B\cap X_\kappa$), and then there is no difficulty defining the shift. Again, we are unconcerned with $\kappa$-simplices in $X_\kappa$ since these are automatically in general position with respect to any polyhedra in $X_\kappa$.

\end{itemize}

Thus, we construct a stratum-preserving isotopy $\phi_0:X\times I\to X$ as follows: For each $\kappa$, $1\leq \kappa\leq n$, we will define an isotopy $G^\kappa$ such that $G^\kappa|_{X^{\kappa-1}}$ is the identity. We will define $\phi_0$ to be the isotopy determined by performing the isotopies $G^1,\ldots, G^n$ successively.

Each $G^\kappa$ also comprises successive isotopies  $F^{\kappa-1,\kappa}\cdots F^{0,\kappa}$, and each $F^{j,\kappa}$ consists of performing McCrory's $(j,\kappa)$-shift for all $j$-simplices of  $X^\kappa-X^{\kappa-1}$
in the triangulation that has been arrived at to that point (by requiring each isotopy to be simplicial with respect to successive refinements of $K$). Each such $(j,\kappa)$-shift is built as in McCrory by joining the identity map of the link of the stratum with a Zeeman shift in $X_\kappa$ (utilizing, as in McCrory, the intermediate step of homeomorphing the appropriate local neighborhoods into standard distinguished neighborhoods). For the purpose of defining the Zeeman shift, we plug into Zeeman's machinery the intersection of the entire $\kappa-1$ skeleton of $X^\kappa$ (in the present triangulation) with $X_{\kappa-1}$. This skeleton plays the role of both McCrory's $A$ and $B$ (Zeeman's $X$ and $Y$). 

Now, suppose that $\sigma$ and $\tau$ are any simplices of the triangulation $K$ of $X$. We prove that $\phi_0$ takes $\sigma$ into stratified general position with respect to $\tau$. It suffices to show that $\phi_0$ takes $\sigma\cap X_\kappa$ into general position with respect to $\tau\cap X_\kappa$ for any $\kappa$, $0\leq \kappa\leq n$. This is trivial for $\kappa=0$. For $\kappa>0$, first the isotopies $G^1,\ldots, G^{\kappa-1}$ take $\sigma\cap X_\kappa$ to some subpolyhedron, say $Z$, of $X_\kappa$. $\tau\cap X_\kappa$ is also such a subpolyhedron. Suppose $\dim(Z\cap X_\kappa)$ (which is equal to $\dim(\sigma\cap X_\kappa)$) is equal to $\ell$ and $\dim(Z\cap \tau\cap X_\kappa)=s\leq \ell$. If $l=\kappa$, then we already have general position in this stratum. Otherwise, the isotopies $F^{\kappa-1,\kappa},\ldots, F^{\ell+1,\kappa}$ fix $Z$ since local shifts of neighborhoods of $t$-simplices fix the $t-1$ skeleton. Furthermore, while the shifts $F^{\ell, \kappa},\ldots,F^{s+1,\kappa}$ do isotop the $\ell,\ldots, s+1$ simplices of $Z$, since their interiors do not intersect $\tau$, it follows from the McCrory-Zeeman construction of the local shifts that the images of their interiors under the isotopy continue not to intersect $\tau$. From here, the movement of the present image of $Z$ under the further isotopies $F^{s,\kappa},\ldots, F^{\dim(\sigma\cap X_\kappa)+\dim(\tau\cap X_\kappa)-\kappa+1,\kappa}$ is exactly that of McCrory's isotopy, which pushes $Z$ into general position with respect to $\tau\cap X_\kappa$. Finally, any remaining isotopies $F^{\dim(\sigma\cap X_\kappa)+\dim(\tau\cap X_\kappa)-\kappa+1,\kappa},\ldots, F^{0,\kappa}$ do not damage this general position, by \cite[Lemma 30]{Z}. 

Since the further isotopies that constitute $G^{\kappa+1},\ldots, G^n$ fix $X^\kappa$, it follows that $\phi_0$ isotops $\sigma$ into stratified general position with respect to $\tau$.

To complete the proposition, we need to modify $\phi_0$ to an isotopy $\phi$ that also  satisfies condition \eqref{I: isotop} of the proposition. For this, let us consider $\phi_0: X\times I\to I$ as a PL map $\psi:X\times I\to X\times I$, given by $\psi(x,t)=(\phi_0(x,t),t)$. We may triangulate the domain and codomain copies of $X\times I$ so that the isotopy is simplicial and also so that each triangulation restricts on $X\times 0$ and $X\times 1$ to a refinement of $K$. We may also assume for each triangulation that $\eta\times I$ is a subcomplex for each $\eta$ in $K$. Now, taking the codomain copy of $X\times I$ with its triangulation, we construct a PL isotopy $\Phi: X\times I\times I\to X\times I$ just as we constructed $\phi_0$ above, but this time relative to $X\times 0$ and $X\times 1$ (in other words, $X\times 0$ and $X\times 1$ are held fixed). Such relative isotopies are also considered by McCrory and Zeeman, and we make the same modifications here as above - in particular we shift the neighborhoods of all possible simplices, this time except for those in $X\times 0$ and $X\times 1$. The previous arguments 
remain unchanged to demonstrate that for each $\sigma, \tau$ in $K$, $\Phi(\psi(\sigma\times I),1)\cap (X\times (0,1))$ is in stratified general position with respect to $\tau\times (0,1)$. In particular, 
\begin{multline*}
\dim(\Phi(\psi(\sigma\times I),1)\cap (\tau\times I)\cap (X_\kappa\times (0,1)))\\
\leq \dim(\sigma\cap X_\kappa) +1 +\dim(\tau\cap X_\kappa) +1-(\kappa+1)\\
=\dim(\sigma\cap X_\kappa)+1+\dim(\tau\cap X_\kappa)-\kappa.
\end{multline*}
Furthermore, of course,
$$\dim(\Phi(\psi(\sigma,1),1)\cap (\tau\times 1)\cap (X_\kappa\times1 ))=\dim(\phi_0(\sigma,1)\cap \tau\cap X_\kappa)\leq \dim(\sigma\cap X_\kappa) +\dim(\tau\cap X_\kappa) -\kappa$$
and
$$\dim(\Phi(\psi(\sigma,0),1)\cap (\tau\times 0)\cap (X_\kappa\times0 ))=\dim(\sigma\cap \tau \cap X_\kappa).$$

Now, let $\pi_X:X\times I\to X$ be the projection to $X$,  let $i_1:X\times I\to X\times I\times I$ be the inclusion into $X\times I\times 1$, and  define $\phi:X\times I\to X$ by $\phi=\pi_X  \Phi  i_1\psi$. To see that $\phi$ satisfies the desired requirements of the proposition, first notice that 
\begin{multline*}
\dim(\Phi((\sigma\cap X_\kappa)\times I)\cap \tau)\\
=\max(\dim(\phi((\sigma\cap X_\kappa)\times (0,1))\cap \tau),\dim(\phi(\sigma\cap X_\kappa,0)\cap \tau),\dim(\phi(\sigma\cap X_\kappa,1)\cap \tau)).
\end{multline*}
Now $\dim(\phi(\sigma\cap X_\kappa,0)\cap \tau)=\dim(\sigma\cap \tau\cap X_\kappa)$, and  $\dim(\phi(\sigma\cap X_\kappa, 1)\cap \tau)=\dim(\phi_0(\sigma\cap X_\kappa,1)\cap \tau)$.
Furthermore,  the 
the projection of $\Phi(\psi(\sigma\times I),1)\cap (\tau\times I)\cap (X_\kappa\times (0,1))$ to $X$ must contain $\phi((\sigma\cap X_\kappa)\times (0,1))\cap \tau$. By the stratified general position we have achieved, $\dim(\Phi(\psi(\sigma\times I),1)\cap (\tau\times I)\cap (X_\kappa\times (0,1)))\leq\dim(\sigma\cap X_\kappa)+1+\dim(\tau\cap X_\kappa)-\kappa$. If $\dim(\supp(\phi((\sigma\cap X_\kappa)\times (0,1))))=\dim(\sigma\cap X_\kappa)+1$, we are done. The only other possibility is that $\dim(\supp(\phi((\sigma\cap X_\kappa)\times (0,1))))=\dim(\sigma\cap X_\kappa)$. But by the definition of $\phi_0$, this is only possible  when $\dim(\sigma\cap X_{\kappa})=\kappa$. So $\phi((\sigma\cap X_\kappa)\times (0,1))\cap (\tau\times I)$ must have the form $(\sigma\cap X_{\kappa}\cap\tau)\times (0,1)$. In this case, the projection $X\times I\to X$ decreases the dimension of this intersection by $1$, and we still have $\dim(\Phi(\psi(\sigma\times I),1)\cap (\tau\times I)\cap (X_\kappa\times (0,1)))\leq
\dim(\supp(\phi((\sigma\cap X_\kappa)\times (0,1))))+\dim(\tau\cap X_\kappa)-\kappa$. 

This completes the proof.
\end{proof}

\section{Intersection pairings}\label{S: pairing}

In this section, we study the intersection pairings defined on our domains $G_k$ and $G_k^P$. In the simplest cases, these correspond to the simultaneous intersection of multiple chains, and we will indeed show that these intersection products correspond to the iteration of Goresky-MacPherson intersection products when such are defined (though not all generalized products have this form as the domains $G_k^P$ allow for the ``intersection'' of more general objects). Yet more general intersection products will arise in Section \ref{S: Leinster} within in the functorial machinery of the Leinster partial commutative algebra structure.

\subsection{Sign issues}\label{S: signs}

Our initial work on the following material was hampered by several difficulties that arose due to seeming inconsistencies in the signs (powers of $-1$) that occurred in the various formulas relating the general intersection pairing we define below with the iterated Goresky-MacPherson intersection pairing. Our struggle with these ``sign problems'' led back to an inconsistency with the Koszul sign conventions in the original version of McClure's paper \cite{McC}, and this problem was traced back to some sign issues involving the definition of the transfer map in Dold \cite{Dold}. Given a map of oriented manifolds $f:M\to M'$, Dold first defines his (homology) transfer maps $f_!:H_*(M')\to H_*(M)$ in the usual way as a composition of Poincar\'e duality on $M'$, followed by the cohomology pullback $f^*$, followed by Poincar\'e duality on $M$ (this is the gist of the construction - Dold actually considers quite general relative cases - see \cite[Section VIII.10]{Dold}). However, there is a cryptic note on page 314 of \cite{Dold}, noting that certain signs one should expect in resulting identities do not appear because the transfer should be defined ``in a more systematic treatment'' with a sign $(-1)^{(\dim M-j)(\dim M'-\dim M)}$, where $j$ is the dimension of the chain to which $f_!$ is being applied.

Applying this correction, however, did not completely fix the sign issues occurring here until it was noticed by McClure that the sign problem is not with the definition of the transfer but with the definition of Poincar\'e duality! McClure argues that the correct definition of the Poincar\'e duality map $P:H^*(M)\to H_{m-*}(M)$ for a closed oriented $m$-manifold $M$ with orientation class $\Gamma$ should be given by $P(x)=(-1)^{m|x|}(x\cap \Gamma)$. Note the sign. Of course this homomorphism is an isomorphism regardless of sign, but this should be considered the ``correct choice'' for the following reason:

Let $C_*(M)$ be the complex of (singular or simplicial) chains on $M$, and $C^*(M)$ the corresponding cochain complex. As usual, we can raise or lower indices and think of $C^*(M)$ as a complex with differential of degree $-1$ by setting $T_*(X)=C^{-*}(X)$. Then $H_*(T_*(X))=H^{-*}(X)$. Then we can think of $\cap \Gamma$ as a homomorphism $T_*(X)\to C_{*+m}(X)$, and $\cap \Gamma$ commutes with the differentials - see \cite[p. 243]{Dold}. But this is not the correct behavior for a map of degree $m$ according to the Koszul conventions! In order to be considered a chain map, a degree $m$ homomorphism should $(-1)^m$ commute with the differential - see \cite[Remark VI.10.5]{Dold}. If we instead use $P(x)=(-1)^{m|x|}(x\cap \Gamma)$, then we do obtain the desired $(-1)^m$ commutativity.

Note that this sign choice for Poincar\'e duality maps automatically incorporates the sign correction into Dold's transfer map since, letting $m=\dim M$ and $m'=\dim M'$, for $x\in C_j(M')$, we obtain $f_!(x)=  f^*((-1)^{(m'-j)m'} (\cap \Gamma_{M'})^{-1}(x))\cap \Gamma_M(-1)^{(m'-j)m} $, which is $(-1)^{(m'-j)(m'-m)}$ times Dold's transfer.

Furthermore, redefining $P(x)=(-1)^{m|x|}S^{-m}(x\cap \Gamma)$ makes this a degree $0$ chain map. 

We will use this convention for Poincar\'e duality throughout. Interestingly, this sign does not alter the sign of the Goresky-MacPherson intersection product \cite{GM1}; see the proof of Proposition \ref{P: GM}, below.

\subsection{An intersection homology multi-product}\label{S: multiproduct}\label{S: transfer}

In this section, we construct a generalized intersection product $\mu_k:G_k^P\to S^{-n}I^{\bar r}C_*(X)$, where $\bar r$ is a perversity greater than the sum of the perversities in $P$. This is done using a transfer (umkher) map that  is essentially a hybrid of the Poincar\'e-Whitehead duality utilized by Goresky-MacPherson \cite{GM1} and the umkehr map of McClure \cite{McC}.

We recall (see \cite[Section II.1]{Bo}) that if $A$ and $B$ are closed PL subspaces of respective dimensions $i$ and $i-1$ of a PL space $X$, then the chains $C\in C_i(X)$ that satisfy $|C|\subset A$ and $|\bd C|\subset B$ correspond bijectively to homology classes $[X]\in H_i(A,B)$. Thus ``in order to prescribe chains, we need only describe sets and homology classes.''

Now suppose $f:X^n\to Y^m$ is a PL map of compact oriented PL stratified  pseudomanifolds such that $f^{-1}(\Sigma_Y)\subset \Sigma_X$, where $\Sigma_X$ and  $\Sigma_Y$ are the respective singular sets of $X$ and $Y$. 
Suppose that $C\in C_i(Y)$ and $\dim(|C|\cap \Sigma_Y)<i$. Then $C$ corresponds to the homology class $[C]\in H_i(|C|,|\bd C|)$. Let $A=|C|$, $B=|\bd C|$, $A'=f^{-1}(A)$, and $B'=f^{-1}(B)$. We consider the following composition of maps
\begin{diagram}[LaTeXeqno] \label{E: transfer}
S^{-m} H_i(A, B)&\rTo & S^{-m}H_i(A\cup\Sigma_Y,B\cup\Sigma_Y)\\
{}&\rTo^{(-1)^{m(m-i)}(\cdot \cap \Gamma_Y)^{-1}}_{\cong} &H^{m-i}(Y-(B\cup\Sigma_Y),Y-(A\cup \Sigma_Y) )\\ 
{}&\rTo^{f^*}& H^{m-i}(X-(B'\cup\Sigma_X),X-(A'\cup \Sigma_X))\\
{}&\rTo^{\cap \Gamma_X(-1)^{n(m-i)}}_{\cong} &S^{-n}H_{i+n-m}(A'\cup \Sigma_X,B'\cup\Sigma_X)\\
\end{diagram}
The indicated signed cap products with the respective fundamental classes represent the Poincar\'e-Whitehead-Goresky-MacPherson duality isomorphism - see \cite[Appendix]{GM1}. We also incorporate the sign convention discussed above in Section \ref{S: signs}. 

Next, we note that $H_{i+n-m}(A'\cup \Sigma_X, B'\cup \Sigma_X)$ is isomorphic to $H_{i+n-m}(A', B'\cup (\Sigma_X\cap A'))$ by excising out $\Sigma_X-\Sigma_X\cap A'$. Furthermore, if $\dim(\Sigma_X\cap A')\leq i+n-m-2$, then by the long exact sequence of the triple (and another excision argument), $$H_{i+n-m}(A', B'\cup (\Sigma_X\cap A'))\cong H_{i+n-m}(A', B').$$ So, when this dimension condition is satisfied, we obtain a map 
 $S^{-m}H_i(A,B)\to S^{-n}H_{i+n-m}(A',B')$, which is a morphism of degree $0$.

\begin{remark}
N.B. It is the condition $\dim(\Sigma_X\cap f^{-1}(|C|))\leq i+n-m-2$ that will force the intersection pairing to be well-defined only for intersection chains in $G_k^P$ with certain perversity requirements on $P$. We cannot hope to have a well-defined pairing for $G_k$, in general, unless $X$ is, in fact, a manifold. 
\end{remark}

Now, let $\Delta$ be the diagonal map $X\into X(k)$. So if $R$ is the unique function $R:\bar k\to \bar 1$, then $\Delta=R^*$ in the notation of Section \ref{S: GP}. 

\begin{definition}\label{D: GPC}
Let $S^{-nk}C^{\Delta}_{*}(X(k))$ be the subcomplex of $S^{-nk}C_{*}(X(k))$ of chains $D$ such that if $D\in S^{-nk}C_{i+nk}(X(k))$ then
\begin{enumerate}
\item $\dim(|D|\cap \Sigma_{X(k)})<\dim|D|$,
\item $D$ is in stratified general position with respect to $\Delta$ (in particular, $\dim(\Delta^{-1}(|D|))\leq i+n$ and $\dim(\Delta^{-1}(|\bd  D|))\leq i+n-1)$, and
\item $\dim(\Delta^{-1}(|D|)\cap \Sigma_X)\leq i+n-2$, and $\dim(\Delta^{-1}(|\bd D|)\cap \Sigma_X)\leq i+n-3$
\end{enumerate}
\end{definition}

Then the chains in $S^{-nk}C_*^{\Delta}(X(k))$  satisfy all of the conditions outlined above for there to be  a well-defined degree $0$ chain homomorphism $\Delta_!:S^{-nk}C_*^{\Delta}(X(k))\to S^{-n}C_*(X)$ defined by taking the chain $D$ to  the homology class $[ D]\in S^{-nk}H_{i+nk}(| D|,|\bd D|)$ and then applying the composition
\begin{equation*} 
S^{-nk}H_{i+nk}(|D|,|\bd  D|)\to S^{-n}H_{i+n}(f^{-1}(|D|),f^{-1}(|\bd D|))\to S^{-n}C_{i+n}(X),
\end{equation*} 
where the first map is the composition described in diagram \eqref{E: transfer} and the second map makes use of the natural isomorphism between homology classes and chains recalled at the beginning of this subsection. 

\begin{definition}\label{D: trans shift} 
The morphism $\Delta_!:S^{-nk}C_*^{\Delta}(X(k))\to S^{-n}C_*(X)$ is a pseudomanifold version of a special case of the classical \emph{transfer} or \emph{umkehr} map. See \cite{Dold, McC} for more details. We show in Appendix A that $\Delta_!$ is indeed a chain map, and we also show there that the corresponding transfers $f_!: S^{-m}C^f_*(M)\to S^{-n}C_*(N)$ of \cite{McC} are chain maps, where $M^m,N^n$ are PL manifolds, $f$ is a PL map, and $C^f_*(M)$ is the chain complex of chains in general position with respect to $f$ - see \cite{McC}. 
\end{definition}

\begin{definition}
Suppose $P=\{\bar p_1,\ldots, \bar p_k\}$ is a sequence of traditional perversities and that $\bar p_1+\cdots + \bar p_k\leq \bar r$ for some traditional perversity $\bar r$. Then 
we let $\mu_k=\Delta_!\circ \bve_k: G_{k,*}^P\to S^{-n}C_*(X)$. Note that $\mu_1$ is the identity.
\end{definition}

We demonstrate in the following proposition that $\mu_k$ is well-defined  on appropriate $G_k^P$ and that its image lies in $S^{-n}I^{\bar r}C_*(X)$. 

\begin{proposition}\label{P: well-def prod}
Suppose $P=\{\bar p_1,\ldots, \bar p_k\}$ is a sequence of traditional perversities and that $\bar p_1+\cdots + \bar p_k\leq \bar r$ for some traditional perversity $\bar r$.
Then $\mu_k$ determines a well-defined chain map (of degree $0$) $G_k^P\to S^{-n}IC_*^{\bar r}(X)$.
\end{proposition}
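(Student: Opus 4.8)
The plan is to verify, for a $k$-tuple $D\in G_k^P$, that the three defining conditions of $S^{-nk}C_*^{\Delta}(X(k))$ in Definition \ref{D: GPC} are satisfied by $\bar\varepsilon_k(D)$, so that $\Delta_!\circ\bar\varepsilon_k$ is defined, and then to check that the resulting chain actually lies in the intersection chain complex $S^{-n}I^{\bar r}C_*(X)$. Throughout, the key observation is that $\bar\varepsilon_k$ sends $S^{-n}I^{\bar p_1}C_*(X)\otimes\cdots\otimes S^{-n}I^{\bar p_k}C_*(X)$ into chains on $X(k)$ supported in products of allowable simplices, and that $G_k^P$ forces both $\bar\varepsilon_k(D)$ and $\bar\varepsilon_k(\bd D)$ to be in stratified general position with respect to \emph{all} surjections $R:\bar k\onto\bar k'$, in particular with respect to the unique $R:\bar k\to\bar 1$, i.e. with respect to $\Delta=R^*$.

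First I would dispense with conditions (1)--(3) of Definition \ref{D: GPC}. Condition (2) (stratified general position with respect to $\Delta$) is immediate from membership in $G_k^P$. For condition (1), I would use that $\bar\varepsilon_k(D)$ is a sum of products $\pm\,\tau_{a_1}\times\cdots\times\tau_{a_k}$ where each $\tau_{a_j}$ is a $\bar p_j$-allowable simplex; since each $\bar p_j$ is a \emph{traditional} perversity (so $\bar p_j(2)=0$ and $\bar p_j(c)\leq c-2$), each allowable simplex meets $\Sigma_X=X^{n-2}$ in dimension strictly less than its own, and a dimension count over the product stratification $X(k)^{nk-2}=\bigcup X^{d_1}\times\cdots\times X^{d_k}$ shows $\dim(|\bar\varepsilon_k(D)|\cap\Sigma_{X(k)})<\dim|\bar\varepsilon_k(D)|$. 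Conditions (3) are the crucial ones: one needs $\dim(\Delta^{-1}(|\bar\varepsilon_k(D)|)\cap\Sigma_X)\leq i+n-2$ and the analogous estimate for $\bd D$. Here I would combine the stratified general position estimate \eqref{E: gen pos} applied to the stratum $Z$ lying over $\Sigma_X=X_{n-2}$ (so all $d_\ell=n-2$, giving the codimension-$(k-1)(n-2)$ shift, i.e. $\dim(\Delta^{-1}(A\cap Z))\le \dim(A\cap Z)-(k-1)(n-2)$) with the allowability bound $\dim(\tau_{a_j}\cap X_{n-2})\leq j_{a_j}-2+\bar p_j(2)=j_{a_j}-2$ for each factor; summing and using $\dim A\cap Z\le\sum(\dim\tau_{a_j}\cap X_{n-2})$ yields exactly $\dim(\Delta^{-1}(|\bar\varepsilon_k(D)|)\cap\Sigma_X)\le i+n-2$, and similarly with the shift by one more for $\bd D$.

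Next I would observe that, because $\bd D\in G_k^P$ as well (it is the image of $D$ under the differential, which preserves $G_k^P$), the same argument applies to $\bd D$, so $\bar\varepsilon_k(\bd D)\in S^{-nk}C_*^{\Delta}(X(k))$ too; hence $\mu_k=\Delta_!\circ\bar\varepsilon_k$ is defined on all of $G_k^P$, and it is a degree-$0$ chain map because both $\bar\varepsilon_k$ (shown earlier) and $\Delta_!$ (Definition \ref{D: trans shift}, proved in Appendix A) are. The remaining point—and the main obstacle—is to show $\im\mu_k\subset S^{-n}I^{\bar r}C_*(X)$, i.e. that $\mu_k(D)$ is $\bar r$-allowable. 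For this I would unwind the transfer \eqref{E: transfer}: $\mu_k(D)$ is a chain supported in $\Delta^{-1}(|\bar\varepsilon_k(D)|)\subset X$, and on the stratum $X_{n-c}=X^{n-c}-X^{n-c-1}$ (codimension $c$) one estimates $\dim(|\mu_k(D)|\cap X_{n-c})$ by applying \eqref{E: gen pos} to the stratum $X_{n-c}\times\cdots\times X_{n-c}\subset X(k)$ lying over $X_{n-c}$ under $\Delta$, together with the allowability of each factor simplex: $\dim(\tau_{a_j}\cap X_{n-c})\le j_{a_j}-c+\bar p_j(c)$. Summing over $j$, using $\sum j_{a_j}=i+nk$ wait—rather using that $\mu_k(D)$ has degree $i+n$ when $D$ has degree $i+nk$ (degree $0$ after the shift), and using $\bar p_1+\cdots+\bar p_k\le\bar r$, I would get $\dim(|\mu_k(D)|\cap X_{n-c})\le (i+n)-c+\bar r(c)$, which is precisely the $\bar r$-allowability condition; the same computation applied to $\bar\varepsilon_k(\bd D)$ (again in $G_k^P$) handles $\bd\mu_k(D)=\mu_k(\bd D)$. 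The delicate bookkeeping is matching the shift term $\sum d_{R^{-1}(i)}-\sum d_i$ in \eqref{E: gen pos} against the perversity sum and the degree count—this is where I expect the argument to require the most care, and it is essentially the pseudomanifold analogue of the allowability verification implicit in the Goresky--MacPherson construction of the intersection product in \cite{GM1}.
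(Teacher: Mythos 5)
Your plan mirrors the paper's almost exactly: verify conditions (1)--(3) of Definition \ref{D: GPC} for $\bar\varepsilon_k(D)$ (and for $\bar\varepsilon_k(\bd D)$, which is automatic since $\bd D\in G_k^P$ by definition of $G_k^P$), so that $\Delta_!$ applies; then establish $\bar r$-allowability of $\mu_k(D)$ and $\bd\mu_k(D)$ by combining the stratified general position inequality \eqref{E: gen pos} with the allowability bounds $\dim(\tau_{a_j}\cap X_\kappa)\le j_{a_j}-(n-\kappa)+\bar p_j(n-\kappa)$ and the hypothesis $\sum\bar p_j\le\bar r$; conclude that $\mu_k$ is a chain map because $\bar\varepsilon_k$ and $\Delta_!$ are. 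This is the paper's argument. One slip to correct: $\Sigma_X=X^{n-2}$ is the union of all strata of codimension at least $2$, not just the single stratum $X_{n-2}$, so your condition (3) estimate must run over every $X_\kappa$ with $\kappa\le n-2$; the paper does exactly this, and since your computation applies verbatim on each such $X_\kappa$ (traditional perversities give $\bar p_j(n-\kappa)\le n-\kappa-2$, hence $\dim(\tau_{a_j}\cap X_\kappa)\le j_{a_j}-2$), the fix is immediate. Your argument for condition (1) — a direct dimension count from allowability — is cleaner than the paper's phrasing and equally valid. Finally, be a little careful at the point the paper flags explicitly: when bounding $\dim(|\bd\mu_k(C)|\cap X_\kappa)$ one must take the max over primitives actually appearing in $\bd C$ after cancellation, not over boundary terms of individual primitives of $C$, since the latter need not lie in $G_k^P$; since you work with $\bar\varepsilon_k(\bd D)$ rather than $\bd$ of primitives, you are implicitly doing this correctly, but it is worth making explicit.
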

\begin{proof}
Suppose $C\in (G_k^P)_i$. We must show that $\bve_k (C)\in S^{-nk}C^{\Delta}_*(X(k))$ so that $\mu_k$ is well-defined, and we must check that $\Delta_!\bve_k(C)$ is in $S^{-n}I^{\bar r}C_*(X)$. In particular, we must verify the three conditions of definition \ref{D: GPC}. But by definition of $G_k^P$, $\bve_k (C)$ is in general position with respect to $\Delta$, so the second condition is satisfied automatically. The first condition is also trivial since $|\xi|\cap \Sigma\subset |\bd \xi|\cap \Sigma$ for any intersection chain, and this implies the same for their products.

Now,  $C$ is represented by a chain in $ \oplus_{j_1+\cdots+j_k=i}(S^{-n}I^{\bar p_1}C_*(X))_{j_1}\otimes \cdots \otimes (S^{-n}I^{\bar p_k}C_*(X))_{j_k}$. Thus if $I$ is a $k$-component multi-index, $C$ breaks into a unique sum $\sum_{|I|=i} C_{I}$, where each $C_I$
lies in a  separate $(S^{-n}I^{\bar p_1}C_*(X))_{j_1}\otimes \cdots \otimes (S^{-n}I^{\bar p_k}C_*(X))_{j_k}$ with $\sum_{i=1}^k j_i=i$.   We know that  $\bar \varepsilon_k(C)\in S^{-nk}C_*(X(k))$, and  since $\Delta$ is the generalized diagonal, $\Delta^{-1}(|\bar \varepsilon_k C|)=|\bar \varepsilon_k(C)|\cap \Delta(X)$. Moreover, for each stratum $X_\kappa$, $\Delta^{-1}(X(k))\cap X_\kappa\cong \Delta(X_\kappa)\subset X_\kappa(k)$. 

Furthermore, for each multi-index $I=\{j_1,\ldots, j_k\}$, each $(S^{-n}I^{\bar p_1}C_*(X))_{j_1}\otimes \cdots \otimes (S^{-n}I^{\bar p_k}C_*(X))_{j_k}$  is generated by chains $S^{-n}\xi_{a_1}\otimes \cdots\otimes S^{-n}\xi_{a_k}$, where each $\xi_{a_\ell}$ is a $\bar p_l$ allowable chain. So for all $\ell$, $\dim(\xi_{a_\ell}\cap X_\kappa)\leq \dim(\xi_a)-(n-\kappa)+\bar p_\ell(n-\kappa)$. It follows that 
\begin{align*}
\dim(|\bar \varepsilon_k(S^{-n}\xi_{a_1}\otimes \cdots \otimes S^{-n}\xi_{a_k})|\cap X_\kappa(k))
&\leq \sum_{\ell=1}^k\dim(\xi_{a_\ell}\cap X_\kappa)\\
&\leq \sum_{\ell=1}^k\left(\dim(\xi_{a_\ell})-(n-\kappa)+\bar p_\ell(n-\kappa)\right)\\
&=i+nk-k(n-\kappa)+\sum_{\ell=1}^k\bar p_\ell(n-\kappa)\\
&=i+k\kappa+\sum_{\ell=1}^k\bar p_\ell(n-\kappa).
\end{align*}
This is true for all $S^{-n}\xi_{a_1}\otimes \cdots\otimes S^{-n}\xi_{a_k}\in (G_k^P)_i$, and since $C$, and hence each $C_I$, is in stratified general position with respect to $\Delta$, we have for each $\kappa$, $0\leq \kappa\leq n-2$, 
\begin{align*}
\dim(\Delta^{-1}(|\bar \varepsilon(C)|)\cap X_\kappa)
&=\dim(|\bar \varepsilon_kC|\cap \Delta(X_\kappa))\\
&\leq \dim(|\bar \varepsilon_kC|\cap X_{\kappa}(k))+\kappa-k\kappa \qquad \text{(by stratified general position)}\\
&\leq \max(\dim(|\bar \varepsilon_k(S^{-n}\xi_{a_1}\otimes \cdots \otimes S^{-n} \xi_{a_k})|\cap X_\kappa(k)))+\kappa-k\kappa\\
&\leq \left(i+k\kappa+\sum_{\ell=1}^k\bar p_\ell(n-\kappa)\right)+\kappa-k\kappa\\
&=i+\kappa+\sum_{\ell=1}^k\bar p_\ell(n-\kappa)\\
&\leq i+\kappa+\bar r(n-\kappa),
\end{align*}
where the maximum in the third line is over all $S^{-n}\xi_{a_1}\otimes \cdots \otimes S^{-n}\xi_{a_k}$ with non-zero coefficient in $C$. 
Since\footnote{Observe that it is critical here that $\bar r$ is a traditional perversity.}   $\bar r(n-\kappa)\leq n-\kappa-2$, it follows that $\dim(\Delta^{-1}(|\bar \varepsilon_kC_I|)\cap \Sigma_X)\leq i+n-2$. Thus $\dim(\Delta^{-1}(|\bve_kC|)\cap \Sigma_X)\leq i+n-2$. The same argument with $\bd C$ (which of course must be broken up into a different sum of tensor products of chains) shows that $\dim(\Delta^{-1}(|\bd \bve_kC|)\cap \Sigma_X)\leq i+n-3$. Thus $\bve_k C$ satisfies all the conditions of Definition \ref{D: GPC} and so lies in $S^{-nk}C^{\Delta}_*(X(k))$. It follows that 
$\mu_k(C)$ is well-defined in $S^{-n}C_*(X)$.

Moreover,  since $\bar \varepsilon(C)$ is an $i$-chain in $G^P_k$, the construction tells us that $\mu_k(C)$ will be an $i$-chain in $S^{-n}C_*(X)$, and thus it is represented by $S^{-n}\Xi$ for some $i+n$ chain $\Xi$. The preceding calculation shows that $\dim(|\Xi|\cap X_\kappa)=\dim(|\mu_k(C)|\cap X_\kappa)\leq i+\kappa+\bar r(n-\kappa)=(i+n)-(n-\kappa)+\bar r(n-\kappa)$, and thus $\Xi$ is $\bar r$-allowable. The same argument shows that $\bd \Xi$ is $\bar r$ allowable, so $\mu_k(C)\in (S^{-n}I^{\bar r}C_*(X))_i$. Note, however, that we cannot restrict the entire argument to primitives in the tensor product, as these might not lie in $G_k^P$; cancellation of boundary terms from different primitives is possible. Thus in considering $\bd C$, the maximum occurring in the last set of inequalities must occur over primitives that appear in $\bd C$ \emph{altogether}, not over boundary terms of individual primitives appearing in $C$. 

It is straightforward that $\mu_k$ is a chain map  since $\bar \varepsilon$ and $\Delta_!$ are and since the dimension conditions we have checked will hold for a sum of chains once they hold for each summand individually.
\end{proof}

\begin{corollary}\label{C: product}
Suppose $P=\{\bar p_1,\ldots, \bar p_k\}$ is a sequence of traditional perversities and that $\bar p_1+\cdots + \bar p_k\leq \bar r$ for some traditional perversity $\bar r$. Then there is a well-defined product (of degree 0) $\mu_{k*}:S^{-n}IH_*^{\bar p_1}(X)\times \cdots\times S^{-n}IH_*^{\bar p_k}(X)\to S^{-n} IH^{\bar r}(X)$. 
\end{corollary}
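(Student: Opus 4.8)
The plan is to obtain the homology-level product as a composite of three maps already in hand, using Theorem \ref{T: pqi} to get around the fact that tensor products of representing cycles generally do not lie in the domain $G_k^P$.

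First I would invoke Proposition \ref{P: well-def prod}, which gives a degree-$0$ chain map $\mu_k\colon G_k^P\to S^{-n}I^{\bar r}C_*(X)$, and hence an induced homomorphism $\mu_{k*}\colon H_*(G_k^P)\to S^{-n}IH^{\bar r}_*(X)$. Next, Theorem \ref{T: pqi} says the inclusion $\iota\colon G_k^P\hookrightarrow S^{-n}IC^{\bar p_1}_*(X)\otimes\cdots\otimes S^{-n}IC^{\bar p_k}_*(X)$ is a quasi-isomorphism, so $\iota_*$ is an isomorphism on homology. Finally, I would use the standard algebraic cross product, the natural degree-$0$ homomorphism
$$S^{-n}IH^{\bar p_1}_*(X)\otimes\cdots\otimes S^{-n}IH^{\bar p_k}_*(X)\longrightarrow H_*\big(S^{-n}IC^{\bar p_1}_*(X)\otimes\cdots\otimes S^{-n}IC^{\bar p_k}_*(X)\big)$$
sending $[\zeta_1]\otimes\cdots\otimes[\zeta_k]$ to $[\zeta_1\otimes\cdots\otimes\zeta_k]$; here I would just check in the routine way that this is well defined and that the suspension conventions are consistent with its being degree $0$. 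Composing these (cross product, then $\iota_*^{-1}$, then $\mu_{k*}$) and precomposing with the canonical map from the $k$-fold cartesian product to the tensor product produces the asserted degree-$0$ multilinear product $\mu_{k*}\colon S^{-n}IH^{\bar p_1}_*(X)\times\cdots\times S^{-n}IH^{\bar p_k}_*(X)\to S^{-n}IH^{\bar r}_*(X)$. For $k=1$ this should reduce to the map induced by the inclusion $I^{\bar p_1}C_*(X)\hookrightarrow I^{\bar r}C_*(X)$, consistent with $\mu_1$ being the identity.

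I do not expect a serious obstacle, since the real work is already done in Theorem \ref{T: pqi} and Proposition \ref{P: well-def prod}; the one point that needs emphasis is conceptual. As stressed in the introduction and in the proof of Proposition \ref{P: well-def prod}, a tensor product $\zeta_1\otimes\cdots\otimes\zeta_k$ of allowable cycles generally fails the stratified general position conditions that define $G_k^P$, so one cannot define the homology product by literally applying $\mu_k$ to such a chain. What makes the construction go through is that the class $[\zeta_1\otimes\cdots\otimes\zeta_k]$ still has a preimage under $\iota_*$ (represented by some, typically far less explicit, chain in $G_k^P$), and $\mu_{k*}$ of that class is independent of the chosen preimage precisely because $\iota_*$ is an isomorphism. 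Well-definedness, multilinearity, and the degree-$0$ property then follow formally from the corresponding properties of the three constituent maps. (Checking that this product agrees with the iterated Goresky--MacPherson intersection product on chains of product form, where the latter is defined, is a separate compatibility statement and is not needed for the corollary.)
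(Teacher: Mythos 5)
Your proposal is correct and follows essentially the same path as the paper's proof: the paper also views the tuple of homology classes as a tensor element, uses the K\"unneth map to land in $H_*(S^{-n}IC_*^{\bar p_1}(X)\otimes\cdots\otimes S^{-n}IC_*^{\bar p_k}(X))$, pulls back along the quasi-isomorphism of Theorem \ref{T: pqi}, and applies the chain map of Proposition \ref{P: well-def prod}. Your extra remark about why one cannot simply apply $\mu_k$ to $\zeta_1\otimes\cdots\otimes\zeta_k$ directly is a correct and useful clarification of the same point the paper makes implicitly.
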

\begin{proof}
We can consider an element of $S^{-n}IH_*^{\bar p_1}(X)\times \cdots\times S^{-n}IH_*^{\bar p_k}(X)$ to be an element of $S^{-n}IH_*^{\bar p_1}(X)\otimes \cdots\otimes S^{-n}IH_*^{\bar p_k}(X)$. 
The corollary then follows from the proposition since $G_k^P$ is quasi-isomorphic to $S^{-n}IC_*^{\bar p_1}(X)\otimes \cdots\otimes S^{-n} IC_*^{\bar p_k}(X)$ by Theorem \ref{T: qi} and since $S^{-n}IH_*^{\bar p_1}(X)\otimes \cdots\otimes S^{-n}IH_*^{\bar p_k}(X)$ is a subgroup of $H_*(S^{-n}IC_*^{\bar p_1}(X)\otimes \cdots\otimes S^{-n}IC_*^{\bar p_k}(X))$ by the K\"unneth Theorem. 
\end{proof}

\begin{remark}
Since  $\Delta$ is a proper map, these considerations may be extended to noncompact oriented pseudomanifolds. In this case, if we continue to desire to study chains with compact supports, we simply replace the cohomology groups that occur in the above definition with the cohomology groups with compact supports, utilizing that version of Poincar\'e duality. There is no problem with the map $\Delta^*$ since $\Delta$ is proper. If we wish instead to consider locally-finite chains, we use the ordinary cohomology groups, but the Borel-Moore homology.\footnote{See \cite{Sp93} for an exposition of the relevant duality theorems. These theorems are stated there for manifolds, but we can adapt to the current situations by thickening the singular sets to their regular neighborhoods and employing some excision arguments and standard manifold doubling techniques.} Observe in this setting that if $|C|$ is not necessarily compact but $\Delta^{-1}(|C|)$ is, then $H^c_{i}(\Delta^{-1}(|C|)\cup \Sigma_X,\Delta^{-1}(|\bd C|)\cup\Sigma_X)\cong H_{i}^{\infty}(\Delta^{-1}(|C|)\cup \Sigma_X,\Delta^{-1}(|\bd C|)\cup\Sigma_X)$, as follows from an excision argument. We also note that, in the case of locally-finite chains, we can functorially restrict to open subsets $U$ of $X$ to get a map
$$S^{-nk}H^{\infty}_*(|C|\cap U(k),|\bd C|\cap U(k))\to S^{-n}H^{\infty}_{*}(\Delta^{-1}(|C|)\cap U,\Delta^{-1}(|\bd C|)\cap U).$$
\end{remark}

\begin{remark}
The transfer map discussed here can also be generalized to appropriate stratified maps $f:X\to Y$ between oriented stratified PL pseudomanifolds in order to obtain a transfer $f_!$ from subcomplexes of intersection chain complexes of $Y$ satisfying appropriate stratified general position conditions to intersection chain complexes of $X$. Since we do not need such generality here, we do not investigate the relevant details. 
\end{remark}

\subsection{Comparison with Goresky-MacPherson product}\label{S: GM}

In this section, we study the compatibility between the intersection product $\mu_k$ and the Goresky-MacPherson intersection product of \cite{GM1} for those instances when our element of $G_k^P$ can be written as a product of chains in stratified general position. Recall that we have introduced a sign in the Poincar\'e-Whitehead-Goresky-MacPherson duality map; see Section \ref{S: signs}. We first consider the case $k=2$ and then generalize to more terms. This will require us to demonstrate that iteration of the Goresky-MacPherson product is well-defined. 

We first show that, when $k=2$, our product is the Goresky-MacPherson intersection product, in those cases where the Goresky-MacPherson product is defined, in particular for two chain in appropriate stratified general position \cite{GM1}. In order to avoid confusion with the cap product, we denote the Goresky-MacPherson pairing by $\pf$, though this symbol is used for a somewhat different, but related, purpose in \cite{GM1}.

\begin{proposition}\label{P: GM}
Suppose that $C\in IC_i^{\bar p}(X)$ and $D\in I^{\bar q}C_j(X)$ are two chains in stratified general position, that $\bd C$ and $D$ are in stratified general position, and that $C$ and $\bd D$ are in stratified general position. Suppose $\bar p+\bar q\leq \bar r$, where $\bar r$ is also a traditional perversity. 
Then $S^n\mu_2(S^{-n}C\otimes S^{-n}D)=C\pf D$. 
\end{proposition}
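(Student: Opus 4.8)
The plan is to unwind both sides as explicit chains and check that they coincide, the only genuine work being the accounting of signs. First I would recall the geometric definition of the Goresky--MacPherson product from \cite[Appendix]{GM1}: under the stated general position hypotheses, $C\pf D$ is the chain with support in $|C|\cap|D|$ (and boundary in $(|\bd C|\cap|D|)\cup(|C|\cap|\bd D|)$) obtained from $[C]\otimes[D]$ by dualizing each factor to a cohomology class on $X$ relative to the complements of the supports enlarged by $\Sigma_X$, forming the cup product, and dualizing back. I would also note that the three general position hypotheses are exactly what is needed for $S^{-n}C\otimes S^{-n}D$ to lie in $G_2^P$ with $P=(\bar p,\bar q)$ (for $k=2$ the only relevant surjection $\bar 2\onto\bar k'$ is the diagonal), so that by Proposition \ref{P: well-def prod} the element $\mu_2(S^{-n}C\otimes S^{-n}D)$ is a well-defined chain in $S^{-n}IC_*^{\bar r}(X)$.

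\textbf{Unwinding $\mu_2$.} Since $\mu_2=\Delta_!\circ\bve_2$, I would first compute that, by the definitions of $\Theta$ and $\bve_2$, $\bve_2(S^{-n}C\otimes S^{-n}D)=(-1)^{e_2(n,n)+n\deg(C)}\,S^{-2n}(C\times D)$, where $C\times D$ is the ordinary simplicial cross product, supported on $|C|\times|D|$. Then $\Delta_!$ applied to this is the composite of diagram \eqref{E: transfer} with $f=\Delta$, $Y=X(2)$, $m=2n$, applied to the homology class of $C\times D$: include into the relative homology with supports enlarged by $\Sigma_{X(2)}$, apply $(\cdot\cap\Gamma_{X(2)})^{-1}$, pull back by $\Delta^{*}$, apply $\cdot\cap\Gamma_X$, and identify the resulting class with a chain.

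\textbf{The crux.} The heart of the comparison is the classical multiplicativity of Poincar\'e--Whitehead--Goresky--MacPherson duality for products: $\Gamma_{X(2)}=\Gamma_X\times\Gamma_X$ and $(\alpha\times\beta)\cap(\Gamma_X\times\Gamma_X)=\pm(\alpha\cap\Gamma_X)\times(\beta\cap\Gamma_X)$ with a Koszul sign, so that $(\cdot\cap\Gamma_{X(2)})^{-1}(C\times D)$ is, up to an explicit sign, the cohomology cross product of the $X$-duals of $C$ and $D$. Combining this with the facts that $\Delta^{*}$ carries the cohomology cross product to the cup product with no sign and that $\Delta^{-1}(\Sigma_{X(2)})=\Sigma_X$ — so that the enlargements by singular sets and the attendant excisions on $X(2)$ restrict consistently to those on $X$, where one invokes precisely the dimension estimates verified inside the proof of Proposition \ref{P: well-def prod} to know that all the relative groups in \eqref{E: transfer} contain the relevant classes and that the final class is represented by an honest $\bar r$-allowable chain — the composite $\Delta_!\circ\bve_2$ reproduces, term by term, the Goresky--MacPherson composite ``dualize, cup, dualize back.'' At this stage $S^n\mu_2(S^{-n}C\otimes S^{-n}D)$ and $C\pf D$ agree as chains up to an overall power of $-1$.

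\textbf{The main obstacle: signs.} What remains, and what is the real content (this comparison being what drove the sign corrections described in the introduction and Section \ref{S: signs}), is to show that overall power is $+1$. One must collect: the $(-1)^{e_2(n,n)}=(-1)^{n^2}$ and $\Theta$-signs contributed by $\bve_2$; the signs $(-1)^{m(m-i)}$ and $(-1)^{n(m-i)}$ in \eqref{E: transfer}, the first of which is trivial since $2n(2n-\deg(C)-\deg(D))$ is even, while the second contributes $(-1)^{n(\deg C+\deg D)}$; the Koszul sign $(-1)^{\deg(\beta)\,\deg(\Gamma_X)}=(-1)^{n(n-\deg D)}$ from the cross/cap interchange; and the observation — a consequence of the sign $(-1)^{m|x|}$ in the Poincar\'e duality convention of Section \ref{S: signs} — that replacing GM's original duality by the modified one leaves $C\pf D$ unchanged, since the modifying sign cancels in the inverse-then-forward pattern of the Goresky--MacPherson construction. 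A direct computation shows these telescope to $(-1)^0$, giving $\mu_2(S^{-n}C\otimes S^{-n}D)=S^{-n}(C\pf D)$, i.e. $S^n\mu_2(S^{-n}C\otimes S^{-n}D)=C\pf D$. The delicate point is to make sure the Koszul sign in $(\alpha\times\beta)\cap(a\times b)$, together with the precise normalization of $\varepsilon$, are the ones fixed by the conventions in force here rather than the alternative standard ones; pinning those down is where the attention is needed, and some of this is where an appeal to the computations of Appendix A is natural.
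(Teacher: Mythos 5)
Your proposal is correct and follows essentially the same route as the paper's proof: unwind $\mu_2=\Delta_!\circ\bve_2$, use $\Gamma_{X(2)}=\Gamma_X\times\Gamma_X$ and the cross/cup relation under $\Delta^*$ to match the Goresky--MacPherson construction, and verify the signs cancel. Your itemized sign bookkeeping ($n^2+ni$ from $\bve_2$, $n(i+j)$ from the cap with $\Gamma_X$, $n(n-j)$ from the cross/cap interchange) does telescope to zero mod $2$, which is exactly the ``$(-1)^{n(n-j)}$ appears twice'' observation in the paper's proof.
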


\begin{proof}[Proof of Proposition \ref{P: GM}]
Let $C$ and $D$ be the indicated chains. We note that two chains being in stratified general position is the same thing as their product under $\bar \varepsilon$ being in stratified general position with respect to $\Delta:X\to X\times X$. We trace through the definitions.  

Recall the definition of the Goresky-MacPherson product: $C\times D$ represents an element of $H_i(|C|,|\bd C|)\times H_j(|D|,|\bd D|)$, which is taken to an element, represented by the same pair of chains, of $H_i(|C|\cup J,J)\times H_j(|D|\cup J, J)$, where $J=|\bd C|\cup|\bd D|\cup \Sigma_X$. Next one applies the inverse to the Poincar\'e-Whitehead-Goresky-MacPherson duality isomorphism represented by the (signed!)  inverse to the cap product with the fundamental class. Let $\Gamma$ denote the fundamental class of $X$, and let $\Upsilon=(\cap \Gamma)^{-1}$, which acts on the right as for cap products. For a constructible pair $(B,A)\subset X$ with $B-A\subset X-\Sigma$, $\Upsilon$ is a well-defined isomorphism $H_{i}(X-A,X-B)\to H^{n-i}(B,A)$; see \cite[Section 7]{GM1}. The Goresky-MacPherson product is represented, up to excisions, by the chain $$(((-1)^{n(n-i)}[C]\Upsilon)\cup ( (-1)^{n(n-j)}[D]\Upsilon))\cap \Gamma (-1)^{n(n-i+n-j)}=(([C]\Upsilon)\cup ( [D]\Upsilon))\cap \Gamma$$ 
in $H_{i+j-n}(|C|\cap|D|,(|\bd C|\cap|D|)\cup (|C|\cap|\bd D|))$ (see \cite[Section 2.1]{GM1}). 
Note that the sign we have introduced in the Poincar\'e duality map does not affect the sign of the Goresky-MacPherson product $\pf$. 

Let $\Upsilon_{2}$ denote the inverse of $\cap (\Gamma\times \Gamma)$, which induces the Poincar\'e-Whitehead-Goresky-MacPherson duality isomorphisms on the pseudomanifold $X\otimes X$. The image of $S^{-n}C\times S^{-n} D$ under $\mu_2$, as defined above, is represented by \begin{align*}
S^{-n}&(\Delta^*((\bar \varepsilon(S^{-n}C\otimes S^{-n}D))\Upsilon_2(-1)^{2n(2n-i-j)})\cap \Gamma (-1)^{n(2n-i-j)}\\
&=S^{-n}(\Delta^*((\bar \varepsilon(S^{-n}C\otimes S^{-n}D))\Upsilon_2))\cap \Gamma (-1)^{n(-i-j)}\\
&=(-1)^{n(-i-j)} S^{-n} (\Delta^*(  (-1)^{n^2+ni} S^{-n}(C\times D))\Upsilon_2))\cap \Gamma \\
&=(-1)^{n(n-j)} S^{-n} (\Delta^*( (S^{-n}(C\times D))\Upsilon_2))\cap \Gamma.
\end{align*}
The second equality comes from the definition of $\bve$.

In order to make the comparison with the Goresky-MacPherson product more precise, notice that, by excision isomorphisms, we can also describe $\mu_2$,  by $(-1)^{n(n-j)}$ times the composition

\begin{align*}
H_{i+j}(|C\times D|,|\bd(C\times D)|)&\to H_{i+j}(|C\times D|,|\bd (C\times D)\cap ((J\times X)\cup (X\times J)))\\
&\cong H_{i+j}(|C\times D|\cup((J\times X)\cup (X\times J)),  ((J\times X)\cup (X\times J)))\\
&\overset{\Upsilon_2}{\cong}H^{2n-i-j}(X\times X- ((J\times X)\cup (X\times J)),\\&\qquad\qquad\qquad\qquad X\times X-|C\times D|\cup ((J\times X)\cup (X\times J)))\\
&\overset{\Delta^*}{\to}H^{2n-i-j}(X-J,X-|C\cap D|\cup J)\\
&\overset{\cap \Gamma}{\cong}H_{i+j-n}(|C\cap D|\cup J,J)\\
&\cong H_{i+j-n}(|C\cap D|,|(|\bd C|\cap |D|)\cup (|C|\cap|\bd D|)),
\end{align*}
followed by the shift to put the associated chain in $S^{-n}C_{i+j}(X)$.

Ignoring the shifts, which we may do at this point without disrupting any signs, it therefore suffices to compare $\Delta^*([C\times D]\Upsilon_2)$ with $([C]\Upsilon)\cup ([D]\Upsilon )$ in $H^{2n-i-j}(X-J,X-|C\cap D|\cup J)$. The usual formula for the cup product says that the latter is equal to $\Delta^*([C]\Upsilon\times  [D]\Upsilon)$, where this $\times$ denotes the cochain cross product. So we compare $[C\times D]\Upsilon_{2}$ with $[C]\Upsilon\times [D]\Upsilon$ in $H^{2n-i-j}(X\times X- ((J\times X)\cup (X\times J)), X\times X-|C\times D|\cup ((J\times X)\cup (X\times J)))$. Taking the cap product with $\Gamma \times \Gamma$ of the former gives $[C\times D]=[C]\times [D]\in  H_{i+j}(|C\times D|\cup((J\times X)\cup (X\times J)),  ((J\times X)\cup (X\times J)))$ (which corresponds to the homology cross product of $C$ and $D$), while taking this cap product with $[C]\Upsilon\times [D]\Upsilon$ gives $(-1)^{n(n-j)}([C]\Upsilon\cap \Gamma) \times ([D]\Upsilon\cap \Gamma)=
(-1)^{n(n-j)}[C] \times [D]\in  H_{i+j}(|C\times D|\cup((J\times X)\cup (X\times J)),  ((J\times X)\cup (X\times J)))$ (see \cite[Theorem 5.4]{BRTG} ).

Thus the sign $(-1)^{n(n-j)}$ appears twice, so they cancel, completing the proof.
\end{proof}

\begin{remark}\label{R: fudging}
In the computations that follow, for the sake of simplicity of notation, we suppress the excisions and allow appropriate chains and cochains to stand for the elements of the respective homology and cohomology groups such as those considered in the preceding proof. Each computation could be performed in more detail by modeling the above arguments more closely.
\end{remark}

\begin{corollary}\label{C: iteration}
Suppose $P=\{\bar p_1,\ldots, \bar p_k\}$ is a sequence of traditional perversities and that $\bar p_1+\cdots + \bar p_k\leq \bar r$ for some traditional perversity $\bar r$. Then if $D_i\in I^{\bar p_i}C_*(X)$ and $(\otimes_{i=1}^kS^{-n}D_i)\in G_k^P$, the product $S^{n}\mu_k(\otimes_{i=1}^kS^{-n}D_i)\in IC_*^{\bar r}(X)$ is equal  to  the iterated Goresky-MacPherson intersection product of the chains $D_i$.
\end{corollary}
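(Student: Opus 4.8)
\textbf{Proof plan for Corollary \ref{C: iteration}.}

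The plan is to proceed by induction on $k$, with the case $k=2$ already handled by Proposition \ref{P: GM}. The key technical point for the inductive step is that $\mu_k$ factors through $\mu_2$ in a controlled way when the input is a genuine tensor product of chains in stratified general position. Concretely, I would first observe that when $\otimes_{i=1}^k S^{-n}D_i \in G_k^P$, the exterior product $\bar\varepsilon_k(\otimes S^{-n}D_i)$ is (up to the explicit $\Theta$-sign and the $e_2$-sign packaged into $\bar\varepsilon$) the cross product $D_1\times\cdots\times D_k$, which sits in stratified general position with respect to \emph{all} generalized diagonals $R^*$, $R:\bar k\onto\bar k'$; this is exactly what membership in $G_k^P$ encodes. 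The diagonal $\Delta:X\into X(k)$ factors as $X\overset{\Delta'}{\into}X(k-1)\times X\overset{\Delta''\times\mathrm{id}}{\la}\cdots$, i.e. through any iterated diagonal, so the transfer $\Delta_!$ should decompose accordingly via functoriality of the cohomology pullback in diagram \eqref{E: transfer} and of the cap products with fundamental classes.

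In more detail, I would argue that $\mu_k(\otimes_{i=1}^k S^{-n}D_i)$ can be computed by transferring one factor at a time: write $\Delta = (\mathrm{id}\times\cdots\times\mathrm{id}\times\delta)\circ\cdots\circ(\mathrm{id}\times\delta)\circ\delta$ where $\delta:X\into X\times X$, and correspondingly show $\Delta_!\circ\bar\varepsilon_k = \mu_2\circ(\mathrm{id}\otimes\mu_2)\circ\cdots\circ(\mathrm{id}^{\otimes(k-2)}\otimes\mu_2)$ when restricted to the subcomplex of products of chains in stratified general position. The compatibility of the signs here is precisely the role played by the correction factors $(-1)^{e_2}$ and $\Theta$ built into $\bar\varepsilon$: they are chosen so that the cross product is associative on the nose in the shifted setting (this is the content of the sign discussion in Section \ref{S: signs} and the appendix lemmas \ref{L: Theta}, \ref{L: eps dual}, \ref{L: mccom}). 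Each application of the inner $\mu_2$ is, by Proposition \ref{P: GM}, the Goresky-MacPherson product of the two relevant chains (one needs here that the intermediate products $D_j\pf D_{j+1}\pf\cdots$ remain in the appropriate stratified general position with respect to the remaining factors, which follows from the general-position hypothesis encoded in $G_k^P$ together with the dimension estimates in the proof of Proposition \ref{P: well-def prod}). Iterating identifies $S^n\mu_k(\otimes S^{-n}D_i)$ with the iterated Goresky--MacPherson product $D_1\pf D_2\pf\cdots\pf D_k$, which in particular must be shown to be well-defined, i.e. independent of the order of association — but this is a standard consequence of the associativity of $\pf$ when all chains are in mutual stratified general position, which in turn follows from the associativity of cup products via the duality isomorphism $\Upsilon$.

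I expect the main obstacle to be bookkeeping the signs and the shifts through the iterated factorization — in particular verifying that the $(-1)^{n(n-j)}$-type corrections appearing in the proof of Proposition \ref{P: GM} assemble, under iteration, into exactly the signs that make the $k$-fold cross product associative (equivalently, that cancel against the $\Theta$- and $e_2$-signs in $\bar\varepsilon_k$). A secondary obstacle is the need to verify that the intermediate chains $D_1\pf\cdots\pf D_j$ satisfy, at each stage, the general position and dimension-of-singular-locus conditions of Definition \ref{D: GPC} with respect to the remaining diagonal; I would handle this by the same allowability/dimension-count argument used in the proof of Proposition \ref{P: well-def prod}, noting that the perversity of $D_1\pf\cdots\pf D_j$ is at most $\bar p_1+\cdots+\bar p_j$, which is bounded by a traditional perversity since it is $\leq\bar r$. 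Modulo these verifications, the statement follows formally by induction from Proposition \ref{P: GM} and the functoriality of the transfer construction.
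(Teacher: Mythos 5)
Your proposal is correct and follows essentially the same route as the paper: induction from the $k=2$ case (Proposition~\ref{P: GM}) using a factorization of $\mu_k$ through iterated $\mu_2$'s (this is Lemma~\ref{L: iteration} and the unnamed lemma following it), plus well-definedness and associativity of the iterated Goresky--MacPherson product (Proposition~\ref{P: GM iter}). The two obstacles you flag are exactly where the paper puts the work: the sign bookkeeping you defer is precisely the explicit computation in the proof of Lemma~\ref{L: iteration}, and the perversity issue for intermediate products is handled via the minimal-perversity Lemmas~\ref{L: min perv} and~\ref{L: perverse sum} (note that $\bar p_1+\cdots+\bar p_j$ need not itself be a traditional perversity, so one works with the minimal traditional perversity above it).
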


Before proving the corollary, we must first demonstrate that iterating the Goresky-MacPherson intersection pairing is even possible in consideration of the necessary perversity compatibilities. This is the goal of the following lemmas.

\begin{definition}\label{D: n-perv}
Let an $n$-perversity be a (traditional Goresky-MacPherson) perversity whose domain is restricted to integers $2\leq \kappa\leq n$. 
\end{definition}

\begin{lemma}\label{L: min perv}
Let $\bar p$ and $\bar q$ be two $n$-perversities such that there exists an $n$-perversity $\bar r$ with $\bar p(\kappa)+\bar q(\kappa)\leq \bar r(\kappa)$ for all $2\leq \kappa\leq n$. There there exists a unique minimal perversity $\bar s$ such $\bar p(\kappa)+\bar q(\kappa)\leq \bar s(\kappa)$ for all $2\leq \kappa\leq n$. (By minimal, we mean that for any $\bar r$ such that  $\bar p(\kappa)+\bar q(\kappa)\leq \bar r(\kappa)$ for all $2\leq \kappa\leq n$, $\bar r(\kappa)\geq \bar s(\kappa)$.)  
\end{lemma}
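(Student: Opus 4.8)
The plan is to write down $\bar s$ by an explicit ``smoothing from above'' of the function $\bar p+\bar q$, to verify directly that it is an $n$-perversity (this is where the hypothesis enters), and then to read off minimality from the same estimate. Set $g(\kappa)=\bar p(\kappa)+\bar q(\kappa)$ for $2\le\kappa\le n$; note that $g$ is non-decreasing and $g(2)=0$, but that $g$ need \emph{not} be a perversity, since it can increase by $2$ in a single step. I would then define
\[
\bar s(\kappa)=\max_{\kappa\le\ell\le n}\bigl(g(\ell)-(\ell-\kappa)\bigr)=\kappa+\max_{\kappa\le\ell\le n}\bigl(g(\ell)-\ell\bigr),\qquad 2\le\kappa\le n .
\]
Taking $\ell=\kappa$ shows $\bar s(\kappa)\ge g(\kappa)\ge 0$, so $\bar s$ takes values in $\Z^+$ and dominates $\bar p+\bar q$ pointwise.

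Next I would check that $\bar s$ is an $n$-perversity. Writing $h(\kappa)=\max_{\ell\ge\kappa}(g(\ell)-\ell)$, we have $\bar s(\kappa)=h(\kappa)+\kappa$, the function $h$ is manifestly non-increasing, and $h(\kappa)=\max(g(\kappa)-\kappa,\,h(\kappa+1))$ for $2\le\kappa\le n-1$. Hence for such $\kappa$, $\bar s(\kappa+1)-\bar s(\kappa)=1-(h(\kappa)-h(\kappa+1))\le 1$; and since $h(\kappa+1)\ge g(\kappa+1)-(\kappa+1)\ge g(\kappa)-(\kappa+1)$ (using that $g$ is non-decreasing), one gets $h(\kappa)=\max(g(\kappa)-\kappa,h(\kappa+1))\le h(\kappa+1)+1$, so $\bar s(\kappa+1)\ge\bar s(\kappa)$. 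The remaining point, $\bar s(2)=0$, is exactly where the hypothesis is used: choosing an $n$-perversity $\bar r$ with $g(\ell)\le\bar r(\ell)$ for all $\ell$, the perversity bound $\bar r(\ell)\le\bar r(2)+(\ell-2)=\ell-2$ gives $g(\ell)-(\ell-2)\le 0$ for every $\ell$, hence $\bar s(2)\le 0$, which together with $\bar s(2)\ge g(2)=0$ yields $\bar s(2)=0$.

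Finally, minimality follows from the same inequality applied at every level: for any $n$-perversity $\bar r$ with $\bar p+\bar q\le\bar r$ and any $\ell\ge\kappa$, $g(\ell)-(\ell-\kappa)\le\bar r(\ell)-(\ell-\kappa)\le\bar r(\kappa)$ because $\bar r(\ell)\le\bar r(\kappa)+(\ell-\kappa)$; taking the maximum over $\ell$ gives $\bar s(\kappa)\le\bar r(\kappa)$. Thus $\bar s$ lies below every perversity dominating $\bar p+\bar q$ and is itself one such, which is precisely the claimed minimality; uniqueness of a minimal element is then automatic (any two minimal such perversities each dominate the other).

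\textbf{Expected main obstacle.} The difficulty is conceptual rather than computational: because $\bar p+\bar q$ can jump by $2$, neither $\bar p+\bar q$ itself nor a forward recursion such as $\bar s(\kappa)=\max(g(\kappa),\bar s(\kappa-1))$ produces a perversity — a jump in $g$ must be \emph{anticipated} by raising $\bar s$ at smaller arguments, which is exactly what the backward-maximum formula does. The role of the hypothesis is to certify that this anticipatory smoothing never forces $\bar s(2)$ above $0$ (equivalently, never pushes $\bar s$ outside the range of honest perversities); without some dominating perversity — for instance if $\bar p=\bar q$ is the top perversity and $n\ge 3$ — no perversity dominates $\bar p+\bar q$ at all, so no minimal one can exist, which is why the hypothesis is genuinely needed.
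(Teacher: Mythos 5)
Your construction and the paper's are the same $\bar s$, just presented in two equivalent ways: the paper builds $\bar s$ by the backward recursion $\bar s(n)=g(n)$, $\bar s(\kappa)=\max\bigl(g(\kappa),\bar s(\kappa+1)-1\bigr)$, and your formula $\bar s(\kappa)=\max_{\kappa\le\ell\le n}\bigl(g(\ell)-(\ell-\kappa)\bigr)$ is precisely the closed-form unrolling of that recursion. The minimality and $\bar s(2)=0$ arguments (via $\bar r(\ell)\le\bar r(\kappa)+(\ell-\kappa)$) are likewise the same in substance; you simply spell out the perversity verification and the minimality estimate in more detail where the paper says ``it is clear.''
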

\begin{proof}
We construct $\bar s$ inductively as follows: Let $\bar s(n)=\bar p(n)+\bar q(n)$. For each $\kappa<n$ (working backwards from $n-1$ to $2$): if $\bar p(\kappa)+\bar q(\kappa)<\bar s(\kappa+1)$, let $\bar s(\kappa)=\bar s(\kappa+1)-1$; and if $\bar p(\kappa)+\bar q(\kappa)=\bar s(\kappa+1)$, let $\bar s(\kappa)=\bar s(\kappa+1)$. We note that by construction we must always have $\bar s(\kappa)\geq \bar p(\kappa)+\bar q(\kappa)$, and it is clear that $\bar s$ is minimal with respect to this property among all functions $\bar f$  satisfying $\bar f(\kappa)\leq \bar f(\kappa+1)\leq \bar f(\kappa)+1$ (for all $\kappa$, $\bar s(\kappa)$ is as low as possible to still be able to ``clear the jumps''). $\bar s$ is certainly a perversity, provided that $\bar s(2)=0$, but this must be the case since we know that $\bar p+\bar q\leq \bar s\leq \bar r$, and $\bar r(2)=\bar p(2)=\bar q(2)=0$.  
\end{proof}

\begin{definition}
Given the situation of the preceding lemma, we will call $\bar s$ the \emph{minimal $n$-perversity over $\bar p$ and $\bar q$}.
\end{definition}

\begin{lemma}\label{L: perverse sum}
Let $\bar p$ and $\bar q$ be two $n$-perversities and let $\bar f:\{2,\ldots, n\}\to \N$ be a non-decreasing function such that $\bar p(\kappa)+\bar q(\kappa)+\bar f(\kappa)\leq \bar r(\kappa)$ for some $n$-perversity $\bar r$ and for all $2\leq \kappa\leq n$. Let $\bar s(\kappa)$ be the minimal $n$-perversity over $\bar p$ and $\bar q$. Then $\bar s(\kappa)+\bar f(\kappa)\leq \bar r(\kappa)$. 
\end{lemma}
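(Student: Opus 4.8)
The plan is a descending induction on $\kappa$, running from $\kappa = n$ down to $\kappa = 2$, which exploits the explicit recursive construction of $\bar s$ given in the proof of Lemma \ref{L: min perv}. First I would note that $\bar s$ is indeed well-defined in this situation: since $\bar f$ takes values in $\N$, we have $\bar p(\kappa) + \bar q(\kappa) \leq \bar p(\kappa) + \bar q(\kappa) + \bar f(\kappa) \leq \bar r(\kappa)$ for all $\kappa$, so the hypothesis of Lemma \ref{L: min perv} is satisfied and the minimal $n$-perversity $\bar s$ over $\bar p$ and $\bar q$ exists.

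For the base case $\kappa = n$, the construction gives $\bar s(n) = \bar p(n) + \bar q(n)$, so $\bar s(n) + \bar f(n) = \bar p(n) + \bar q(n) + \bar f(n) \leq \bar r(n)$ by hypothesis. For the inductive step, suppose $\bar s(\kappa+1) + \bar f(\kappa+1) \leq \bar r(\kappa+1)$ and consider the two cases in the construction of $\bar s(\kappa)$. If $\bar p(\kappa) + \bar q(\kappa) = \bar s(\kappa+1)$, then $\bar s(\kappa) = \bar s(\kappa+1) = \bar p(\kappa) + \bar q(\kappa)$, and the hypothesis $\bar p(\kappa) + \bar q(\kappa) + \bar f(\kappa) \leq \bar r(\kappa)$ yields the claim immediately. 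If instead $\bar p(\kappa) + \bar q(\kappa) < \bar s(\kappa+1)$, then $\bar s(\kappa) = \bar s(\kappa+1) - 1$, and one estimates
\[
\bar s(\kappa) + \bar f(\kappa) = \bar s(\kappa+1) - 1 + \bar f(\kappa) \leq \bar s(\kappa+1) + \bar f(\kappa+1) - 1 \leq \bar r(\kappa+1) - 1 \leq \bar r(\kappa),
\]
where the first inequality uses that $\bar f$ is non-decreasing, the second is the inductive hypothesis, and the third is the perversity condition $\bar r(\kappa+1) \leq \bar r(\kappa) + 1$.

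The only point requiring any care is the second case, where $\bar s(\kappa)$ may be strictly larger than $\bar p(\kappa) + \bar q(\kappa)$, so the hypothesis at level $\kappa$ cannot be applied directly; instead the bound is propagated down from level $\kappa+1$, and the slack of $1$ created by the drop $\bar s(\kappa) = \bar s(\kappa+1) - 1$ is exactly absorbed by the maximal allowed jump of $\bar r$. Here the monotonicity of $\bar f$ is essential: if $\bar f$ were permitted to decrease, this step would break down. No further obstacles are anticipated; the argument is entirely elementary.
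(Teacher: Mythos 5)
Your proof is correct and follows essentially the same descending induction on $\kappa$ as the paper, with the same two-case split based on the recursive construction of $\bar s$; the only difference is that you spell out the chain of inequalities in the second case (in particular, where the monotonicity of $\bar f$ enters), which the paper states tersely.
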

\begin{proof}
Since $\bar s(n)=\bar p(n)+\bar q(n)$ (see the proof of Lemma \ref{L: min perv}), we have $\bar s(n)+\bar f(n)\leq \bar r(n)$. Suppose now that $\bar s(\kappa+1)+\bar f(\kappa+1)\leq \bar r(\kappa+1)$ for some $\kappa$, $2\leq \kappa\leq n-1$.   If $\bar p(\kappa)+\bar q(\kappa)<\bar s(\kappa+1)$, then $\bar s(\kappa)=\bar s(\kappa+1)-1$, and we must have $\bar s(\kappa)+\bar f(\kappa)\leq \bar r(\kappa)$ since $\bar r(\kappa)\geq \bar r(\kappa+1)-1$. If $\bar p(\kappa)+\bar q(\kappa)=\bar s(\kappa+1)$, then $\bar s(\kappa)=\bar s(\kappa+1)=\bar p(\kappa)+\bar q(\kappa)$, and so again $\bar s(\kappa)+\bar f(\kappa)\leq \bar r(\kappa)$, this time by hypothesis. The proof is complete by induction, noting that we cannot have $\bar p(\kappa)+\bar q(\kappa)>\bar s(\kappa+1)$.
\end{proof}

\begin{proposition}\label{P: GM iter}
Let $P=\{\bar p_j\}_{j=1}^k$ be a collection of $n$-perversities such that $\sum_{j=1}^k \bar p_j(\kappa)\leq \bar r(\kappa)$ for all $2\leq \kappa\leq n$ and for some $n$-perversity $\bar r$. Let $X$ be an oriented $n$-dimensional pseudomanifold. Let $D_j\in I^{\bar p_j}C_{i_j}(X)$, $1\leq j\leq k$ be such that $\otimes_{j=1}^kS^{-n}D_j\in G_k^P$.
Then the iterated Goresky-MacPherson intersection product of the $D_j$ is a well-defined element of  
$I^{\bar r}C_{-n(k-1)+\sum i_j}(X)$, independent of arrangement of parentheses. In particular, 
there is a well-defined product $\prod_{j=1}^k IH_{i_j}^{\bar p_j}(X)\to IH^{\bar r}_{-n(k-1)+\sum i_j}(X)$ independent of arrangement of parentheses.
\end{proposition}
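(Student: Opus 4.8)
The plan is to induct on the shape of the parenthesization, realizing every iterated product as an iteration of ordinary two-factor Goresky--MacPherson products and controlling each stage through the single hypothesis $\bigotimes_{j=1}^k S^{-n}D_j\in G_k^P$. Fix the order $D_1,\dots,D_k$; for a consecutive block $S\subseteq\{1,\dots,k\}$ and a bracketing of it, write $E_S$ for the resulting partial product when it exists. The induction on $|S|$ should establish, for every such $E_S$, that it is defined, that it is independent of the bracketing of $S$ (justifying the notation), and that it satisfies: (i) $|E_S|\subseteq\bigcap_{l\in S}|D_l|$ and $|\bd E_S|\subseteq\bigcup_{l\in S}\bigl(\bigcap_{l'\in S\setminus\{l\}}|D_{l'}|\cap|\bd D_l|\bigr)$; (ii) $E_S$ is a chain of dimension $\sum_{l\in S}i_l-n(|S|-1)$ which is $\bar r$-allowable, hence an element of $I^{\bar r}C_*(X)$; and (iii), up to the excisions suppressed as in Remark \ref{R: fudging}, $E_S$ is the chain associated to $(([D_{s_1}]\Upsilon)\cup\cdots\cup([D_{s_m}]\Upsilon))\cap\Gamma$, where $S=\{s_1<\cdots<s_m\}$ and $\Upsilon$, $\cap\Gamma$ denote the signed Poincar\'e--Whitehead--Goresky--MacPherson duality isomorphisms of Section \ref{S: GM}. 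For $|S|\le 2$ this is either trivial or is exactly the content of Proposition \ref{P: GM} and its proof.

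For the inductive step, a block $S$ of size $\geq 2$ with a chosen outermost bracketing is $E_{S'}\pf E_{S''}$ for the initial/final split $S=S'\sqcup S''$, with $E_{S'},E_{S''}$ supplied by the inductive hypothesis. To know $E_{S'}\pf E_{S''}$ is defined I must check that $E_{S'},E_{S''}$ are in stratified general position and likewise $(\bd E_{S'},E_{S''})$ and $(E_{S'},\bd E_{S''})$ --- equivalently that $\bve_2(S^{-n}E_{S'}\otimes S^{-n}E_{S''})$ meets the conditions of Definition \ref{D: GPC}. This is exactly where $G_k^P$ is used: by invariant (i) the relevant supports lie inside intersections $\bigcap_{j\in S}|D_j|$ (or, on the boundary side, inside $\bigcap_{j\in S\setminus\{l\}}|D_j|\cap|\bd D_l|$), and applying the $G_k^P$ condition for the surjection $R\colon\bar k\onto\bar k'$ that collapses the block $S$ to one point and is injective elsewhere --- together with the companion conditions coming from $\bve_k(\bd \bigotimes_j S^{-n}D_j)$ --- yields, via \eqref{E: gen pos}, the bounds $\dim(\bigcap_{j\in S}|D_j|\cap X_{n-\ell})\leq\sum_{j\in S}\dim(|D_j|\cap X_{n-\ell})-(|S|-1)(n-\ell)$ and their boundary analogues. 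Inserting the allowability of the $D_j$ and the dimension formula (ii) for $E_{S'},E_{S''}$, every required general-position and singular-set condition reduces to the numerical inequalities $\sum_{j=1}^k\bar p_j(\ell)\leq\bar r(\ell)$ and $\bar r(\ell)\leq\ell-2$, both of which hold because $\bar r$ is a traditional perversity with $\sum_{j=1}^k\bar p_j\leq\bar r$. Hence $E_{S'}\pf E_{S''}$ is defined, and rerunning the same support bounds on $|E_{S'}\pf E_{S''}|\subseteq\bigcap_{j\in S}|D_j|$ produces invariants (i) and (ii) for $E_S$. (The sharper statement that $E_S$ is even $\bar s_S$-allowable for $\bar s_S$ the minimal perversity over $\{\bar p_l\}_{l\in S}$ follows by iterating Lemmas \ref{L: min perv} and \ref{L: perverse sum}, but is not needed for the stated proposition.)

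Invariant (iii) closes the induction and gives bracketing-independence at one stroke. Since $\cap\Gamma$ inverts $\Upsilon$ --- the degree-dependent signs of Section \ref{S: signs} cancelling exactly as verified in the proof of Proposition \ref{P: GM} --- the inductive description of $E_{S'}$, $E_{S''}$ gives that $[E_{S'}]\Upsilon$ is the cup product $([D_{s_1}]\Upsilon)\cup\cdots$ over the initial block and similarly over the final block; feeding these into the two-factor formula of Proposition \ref{P: GM} and using that the cup product on cohomology classes is associative on the nose shows $E_{S'}\pf E_{S''}$ is the chain associated to $(([D_{s_1}]\Upsilon)\cup\cdots\cup([D_{s_m}]\Upsilon))\cap\Gamma$, with no dependence on the chosen split. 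As every chain produced this way is supported in the fixed closed set $A=\bigcap_{l\in S}|D_l|$ with boundary in the fixed closed set $B=\bigcup_{l\in S}\bigl(\bigcap_{l'\in S\setminus\{l\}}|D_{l'}|\cap|\bd D_l|\bigr)$, and chains with prescribed support pair $(A,B)$ are in bijection with $H_*(A,B)$, two bracketings realizing the same class realize the same chain. Taking $S=\{1,\dots,k\}$ gives the iterated product well-defined in $I^{\bar r}C_{-n(k-1)+\sum i_j}(X)$ and independent of parentheses; passing to homology (iterated products of cycles are cycles, and at each stage the pairing descends to $IH$ because the two-factor Goresky--MacPherson product does) yields the asserted map $\prod_{j=1}^k IH_{i_j}^{\bar p_j}(X)\to IH^{\bar r}_{-n(k-1)+\sum i_j}(X)$.

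The main obstacle is the verification behind the inductive step. As stressed throughout Section \ref{S: GP}, a partial product $E_S$ cannot in general be written as a sum of tensor products of chains that are themselves in stratified general position, so one cannot split $E_S$ and argue factorwise; the key point is that the support of $E_S$ is nonetheless trapped inside $\bigcap_{l\in S}|D_l|$, and it is the interplay of this containment with the entire family of $G_k^P$ conditions --- for all the diagonal-type surjections, and on both $\bve_k(\bigotimes_j S^{-n}D_j)$ and $\bve_k(\bd \bigotimes_j S^{-n}D_j)$ --- that certifies transversality at every node. The remaining subtlety is keeping the signed duality convention of Section \ref{S: signs} consistent across the iteration in invariant (iii); but Proposition \ref{P: GM} already isolates the one sign cancellation at issue, so iterating it is routine.
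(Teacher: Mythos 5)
Your proposal is correct and follows essentially the same route as the paper's own proof: iterate the two-factor Goresky--MacPherson product, invoke the $G_k^P$ condition (via the collapsing surjections $R$) to certify stratified general position at every stage, invoke Lemmas \ref{L: min perv} and \ref{L: perverse sum} to keep the perversities compatible, and derive independence of bracketing from the cup-product description $(\;[D_{s_1}]\Upsilon\cup\cdots\cup[D_{s_m}]\Upsilon\;)\cap\Gamma$ together with associativity of the cup product. The extra formalism you add (the explicit inductive invariants (i)--(iii) and the support containments $|E_S|\subseteq\bigcap_{l\in S}|D_l|$) is a more organized presentation of what the paper asserts somewhat tersely, not a different argument.
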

\begin{proof}
By \cite{GM1}, if $D_1\in I^{\bar p}C_a$ and $D_2\in I^{\bar q}C_b$ are in stratified general position and the boundary of $D_1$ is in stratified general position with respect to $D_2$ and vice versa, then there is a well-defined intersection product $D_1\pf D_2\in I^{\bar u}C_{a+b-n}(X)$ whenever $\bar p+\bar q\leq \bar u$. It follows from the preceding lemma that for any pair $D_{i_\ell}\times D_{i_\ell+1}\in IC_{i_\ell}^{\bar p_\ell}(X)\times I^{\bar pC_{\ell+1}}_{i_{\ell+1}}(X)$ such that $D_{i_\ell}$ and $D_{i_\ell+1}$ satisfy the necessary general position requirements, there is a well-defined pairing to $I^{\bar s}C_{i_\ell+i_{\ell+1}-n}(X)$, where $\bar s$ is the minimal $n$-perversity over $\bar p$ and $\bar q$. Since, by the lemma, $\bar s(\kappa)+\sum_{j\neq \ell, \ell+1} \bar p_j(\kappa)$ is still $\leq \bar r(\kappa)$, we can iterate the Goresky-MacPherson intersection product to obtain an $m$-fold intersection product so long as $D_{i_\ell}\pf D_{i_\ell+1}$ is in stratified general position (including the general position conditions on the boundaries) with whichever chain it will be intersected with next. But the condition $\otimes S^{-n} D_i\in G_k^P$ precisely guarantees that such general position will be maintained, even amongst combined sets of intersection (for any given surjective $R$ and any $i\neq j$, the intersection of the chains indexed by $R^{-1}(i)$ and the intersection of the chains indexed by $R^{-1}(j)$ will be in stratified general position by definition of $G_k^P$). Thus iteration is allowed.

The claim that this gives an iterated pairing on $IH$ follows immediately given that any two intersection cycles can be pushed into stratified general position within their homology classes - see \cite{GM1}. The claim concerning independence of ordering of parentheses is the claim that the the Goresky-MacPherson pairing is associative when the iterated pairing is well-defined. But this follows directly from the definition of the Goresky-MacPherson pairing and the associativity of the cup product: As noted in the proof above of Proposition \ref{P: GM}, $C\pf D$ is represented by $([C]\Upsilon\cup [D]\Upsilon)\cap \Gamma$ (we drop the signs in the duality isomorphisms since they cancel in the definition of $\pf$ - see the proof of Proposition \ref{P: GM}).
So the iterated product of $C$, $D$, and $E$ looks like 
\begin{align*}
([C]\pf[D])\pf[E]&=((([C]\Upsilon\cup [D]\Upsilon)\cap \Gamma)\Upsilon\cup [E]\Upsilon)\cap \Gamma\\
&=(([C]\Upsilon\cup [D]\Upsilon)\cup[E]\Upsilon)\cap \Gamma\\
&=([C]\Upsilon\cup ([D]\Upsilon\cup[E]\Upsilon)\cap \Gamma\\
&=([C]\Upsilon\cup (([D]\Upsilon\cup \Upsilon[E])\cap \Gamma)\Upsilon\cap \Gamma\\
&=[C]\pf([D]\pf [E]).
\end{align*} 
Note that in defining any of these products, we may use $J=|\bd C|\cup|\bd D|\cup |\bd E|\cup \Sigma$ (see the proof of Proposition \ref{P: GM}). Enlarging $J$ in this way will not interfere with the necessary excisions since, for example, having $S^{-n}C\otimes S^{-n}D\otimes S^{-n}E\subset G_3^P$ implies that $S^{-n}\bd E$ is in general position with respect to $S^{-n}C\cap S^{-n}D$. Thus $\dim(|\bd E|\cap |C|\cap |D|)<\dim(|C|)+\dim(|D|)-n$.

\end{proof}

\begin{corollary}
$\mu_2$ satisfies  $\mu_2(\mu_2(S^{-n}A\otimes S^{-n}B)\otimes S^{-n}C)=\mu_2(S^{-n}A\otimes \mu_2(S^{-n}B\otimes S^{-n}C))$ when these expressions are all well-defined.
\end{corollary}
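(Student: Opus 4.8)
The plan is to reduce the identity to the associativity of the Goresky--MacPherson pairing, which was (in effect) established inside the proof of Proposition~\ref{P: GM iter}. Assume all three expressions in the statement are defined, and write $A\in I^{\bar p}C_*(X)$, $B\in I^{\bar q}C_*(X)$, $C\in I^{\bar s}C_*(X)$ for the three chains. Unwinding the definition of $\mu_2$, the hypothesis that $\mu_2(\mu_2(S^{-n}A\otimes S^{-n}B)\otimes S^{-n}C)$ is defined says precisely that $S^{-n}A\otimes S^{-n}B\in G_2^{(\bar p,\bar q)}$ and that, setting $E := S^n\mu_2(S^{-n}A\otimes S^{-n}B)$, we have $S^{-n}E\otimes S^{-n}C\in G_2^{(\bar r',\bar s)}$ for an appropriate perversity $\bar r'$ over $\bar p,\bar q$; membership in these $G_2$'s is exactly the assertion that $A,B$ (with $\bd A,B$ and $A,\bd B$) are in stratified general position, and then that $E,C$ (with $\bd E,C$ and $E,\bd C$) are in stratified general position. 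Applying Proposition~\ref{P: GM} twice therefore gives $E=A\pf B$ and $S^n\mu_2(S^{-n}E\otimes S^{-n}C)=E\pf C=(A\pf B)\pf C$. Symmetrically, the right-hand side equals $S^{-n}\bigl(A\pf(B\pf C)\bigr)$, and the hypotheses also guarantee that each of the four binary products $A\pf B$, $(A\pf B)\pf C$, $B\pf C$, $A\pf(B\pf C)$ is defined in the sense of \cite{GM1}.

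It then remains to prove $(A\pf B)\pf C=A\pf(B\pf C)$ as chains, which I would do exactly as in the associativity computation within the proof of Proposition~\ref{P: GM iter}: represent $A\pf B$ by $([A]\Upsilon\cup[B]\Upsilon)\cap\Gamma$ (the duality signs cancel, as in the proof of Proposition~\ref{P: GM}), use the relation $(\xi\cap\Gamma)\Upsilon=\xi$ to rewrite $\bigl(([A]\Upsilon\cup[B]\Upsilon)\cap\Gamma\bigr)\Upsilon\cup[C]\Upsilon$ as $[A]\Upsilon\cup[B]\Upsilon\cup[C]\Upsilon$, and invoke associativity of the cup product; running the same chain of equalities from the other parenthesization produces the same cochain capped with $\Gamma$. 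To carry this out honestly with chains rather than classes, I would work throughout relative to the single auxiliary set $J:=|\bd A|\cup|\bd B|\cup|\bd C|\cup\Sigma_X$, observing that $\bd(A\pf B)$ is supported in $(|\bd A|\cap|B|)\cup(|A|\cap|\bd B|)\subseteq J$ and similarly for the other intermediate products, so that all the relevant Poincar\'e--Whitehead--Goresky--MacPherson duality isomorphisms and cup/cap products may be formed over the same pairs; the excisions needed to identify these relative groups are valid because of the pairwise general position data extracted above (passing, where convenient, from $|A\pf B|$ to the larger set $|A|\cap|B|$ containing it).

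The point to watch — and what I expect to be the only real work — is that one \emph{cannot} simply quote Proposition~\ref{P: GM iter}, whose hypothesis $S^{-n}A\otimes S^{-n}B\otimes S^{-n}C\in G_3^P$ is strictly stronger than ``all three expressions are defined'': $A$ and $C$ themselves need not be in stratified general position even when both iterated products exist. What survives under the weaker hypothesis is the cup-product computation in that proof, which only ever cups $[A]\Upsilon$ with $[B]\Upsilon$ and then with $[C]\Upsilon$ (resp.\ $[B]\Upsilon$ with $[C]\Upsilon$ and then with $[A]\Upsilon$), so that only the pairwise positions of $A,B$ and of $A\pf B,C$ (resp.\ of $B,C$ and of $A,B\pf C$) — precisely the data the hypotheses supply — are used to justify the cap products and the excisions over $J$. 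Checking this bookkeeping, in particular that enlarging the boundary set to the common $J$ disturbs none of the excision isomorphisms, completes the argument; the signs of Section~\ref{S: signs} play no role since, as in Proposition~\ref{P: GM}, they cancel.
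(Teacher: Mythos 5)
Your strategy matches the paper's: the corollary is proved by combining Proposition~\ref{P: GM iter} (which gives the associativity $(A\pf B)\pf C=A\pf(B\pf C)$ via the cup-product calculation) with Proposition~\ref{P: GM} (which identifies $S^n\mu_2$ with $\pf$). You have also correctly noticed that, read literally, ``when these expressions are all well-defined'' is weaker than the hypothesis $S^{-n}A\otimes S^{-n}B\otimes S^{-n}C\in G_3^P$ under which Proposition~\ref{P: GM iter} operates: the $G_3^P$ condition, applied to the boundary term $A\otimes B\otimes\bd C$ via the full diagonal $\bar 3\onto\bar 1$, forces $\dim(|\bd C|\cap|A|\cap|B|)<\dim A+\dim B-n$, whereas mere well-definedness of the two iterated expressions only puts $\bd C$ in stratified general position with $|A\pf B|$.

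That distinction is precisely where your proposed repair breaks. You suggest ``passing, where convenient, from $|A\pf B|$ to the larger set $|A|\cap|B|$ containing it,'' but this passage runs the wrong direction: since $|A\pf B|\subseteq|A|\cap|B|$ may be a proper inclusion, stratified general position of $\bd C$ with the smaller set places no bound on $\dim(|\bd C|\cap|A|\cap|B|)$. Yet that bound — together with its analogues for $\bd A$ and $\bd B$ against the other two supports — is exactly what justifies computing both parenthesizations over the common $J=|\bd A|\cup|\bd B|\cup|\bd C|\cup\Sigma_X$, because it is what guarantees that enlarging $J$ does not disturb the relative groups through which $\Upsilon$, the cup product, and $\cap\,\Gamma$ pass. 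This is the estimate the paper extracts from $G_3^P$ in the final remark of the proof of Proposition~\ref{P: GM iter}, and it is not available under your weaker hypothesis. So either read ``all well-defined'' as the paper implicitly does (as shorthand for the $G_3^P$ condition, in which case Proposition~\ref{P: GM iter} applies directly and the corollary's one-line citation is complete), or supply the triple-intersection dimension bounds from some other source; as written, your argument does neither.
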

\begin{proof}
This follows from the preceding proposition and Proposition \ref{P: GM}.
\end{proof}

Next, we compare how $\mu_k$ relates to the iteration of two products $\mu_{k_1}$ and $\mu_{k_2}$ with $k_1+k_1=k$.

\begin{lemma}\label{L: iteration}
Let $k=k_1+k_2$. Let $\bar p_a$, $1\leq a\leq k_1$, and $\bar p_{k_1+b}$, $1\leq b\leq k_2$ be collections of $n$-perversities such that $\sum_{a=1}^{k_1} \bar p_a\leq \bar q_1$ and $\sum_{b=1}^{k_2} \bar p_{k_1+b}\leq \bar q_2$ for perversities $\bar q_1,\bar q_2$. Suppose $\bar q_1+\bar q_2\leq \bar r$ for a perversity $\bar r$. Let  $P=(\bar p_1,\ldots, \bar p_k)$.
Suppose $C=S^{-n}D_1\otimes \cdots \otimes S^{-n} D_{k}$ is an element of $G_k^P$, and let $C_1=S^{-n}D_1\otimes \cdots \otimes S^{-n}D_{k_1}$ and $C_2=S^{-n}D_{k_1+1}\otimes \cdots \otimes S^{-n}D_{k}$. 
Then $$ \mu_{k}(C)= \mu_2( \mu_{k_1}(C_1)\otimes  \mu_{k_2}(C_2)).$$ In particular, $ \mu_k(C)=\mu_2( \mu_{k-1}(S^{-n}D_1\otimes\cdots\otimes S^{-n}D_{k-1})\otimes S^{-n}D_k)$. 
\end{lemma}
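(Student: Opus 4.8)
The plan is to compute $\mu_k(C)$ directly from the definition $\mu_m=\Delta_!\circ\bve_m$, exploiting the factorization of the $k$-fold diagonal. Write $\Delta^{(m)}\colon X\to X(m)$ for the $m$-fold diagonal, so $\Delta$ (the map $R^*$ attached to the unique $R\colon\bar k\onto\bar 1$) equals $\Delta^{(k)}$, and under the identification $X(k)=X(k_1)\times X(k_2)$ we have $\Delta^{(k)}=\bigl(\Delta^{(k_1)}\times\Delta^{(k_2)}\bigr)\circ\Delta^{(2)}$, with $\Delta^{(2)}\colon X\to X\times X$ and $\Delta^{(k_1)}\times\Delta^{(k_2)}\colon X\times X\to X(k_1)\times X(k_2)$. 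I will use three facts. (i) \emph{Associativity of the exterior product}: $\bve_k=\bve_2\circ(\bve_{k_1}\otimes\bve_{k_2})$, where the outer $\bve_2$ is the exterior product for the spaces $X(k_1),X(k_2)$ of dimensions $nk_1,nk_2$. This follows from associativity of McClure's simplicial cross product \cite{McC} once the $\Theta$-signs and $(-1)^{e_2}$-signs are matched; they do match because $e_2$ of $k$ copies of $n$ equals $e_2$ of $k_1$ copies plus $e_2$ of $k_2$ copies plus $n^2k_1k_2$, which is exactly the sort of identity recorded in Lemma \ref{L: Theta}. (ii) \emph{Functoriality of the transfer under composition}: since the transfer of diagram \eqref{E: transfer} is a composite of two Poincar\'e--Whitehead--Goresky--MacPherson duality isomorphisms with the cohomology pullback $\Delta^{*}$, and pullback is contravariantly functorial while the two duality isomorphisms on $X\times X$ cancel, one gets $\Delta_!=(\Delta^{(2)})_!\circ\bigl(\Delta^{(k_1)}\times\Delta^{(k_2)}\bigr)_!$ wherever both composites are defined. (iii) \emph{Compatibility of transfer with exterior product}: $\bigl(\Delta^{(k_1)}\times\Delta^{(k_2)}\bigr)_!\circ\bve_2=\bve_2\circ\bigl((\Delta^{(k_1)})_!\otimes(\Delta^{(k_2)})_!\bigr)$; this is the pseudomanifold form of the corrected Lemma~10.5b of \cite{McC} (see Lemma \ref{L: mccom}), which I would reprove in the present setting by combining McClure's argument over $X-\Sigma$ with the $\Sigma$-enlargement and excision arguments used in the proof of Proposition \ref{P: well-def prod}.

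Before assembling these I must verify that the right-hand side is defined. Given a surjection $R_1\colon\bar k_1\onto\overline{k_1'}$, extend it to a surjection $\bar k\onto\overline{k_1'+k_2}$ that agrees with $R_1$ on $\{1,\dots,k_1\}$ and is bijective on $\{k_1+1,\dots,k\}$; comparing \eqref{E: gen pos} for this extension with \eqref{E: gen pos} for $R_1$, the contributions of the factors $D_{k_1+1},\dots,D_k$ to the two sides cancel, so $C\in G_k^P$ forces $C_1\in G_{k_1}^{P_1}$ with $P_1=(\bar p_1,\dots,\bar p_{k_1})$, and symmetrically $C_2\in G_{k_2}^{P_2}$. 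Since $\sum_{a=1}^{k_1}\bar p_a\le\bar q_1$ and $\sum_{b=1}^{k_2}\bar p_{k_1+b}\le\bar q_2$ with $\bar q_1+\bar q_2\le\bar r$ (shrinking $\bar q_1,\bar q_2$ to minimal perversities if convenient, via Lemmas \ref{L: min perv} and \ref{L: perverse sum}), Proposition \ref{P: well-def prod} makes $\mu_{k_1}(C_1)\in S^{-n}I^{\bar q_1}C_*(X)$ and $\mu_{k_2}(C_2)\in S^{-n}I^{\bar q_2}C_*(X)$ well-defined; and exactly as in the proof of Proposition \ref{P: GM iter}, applied to the surjection $\bar k\onto\bar 2$ that groups $\{1,\dots,k_1\}$ and $\{k_1+1,\dots,k\}$, the hypothesis $C\in G_k^P$ keeps $\mu_{k_1}(C_1)$ and $\mu_{k_2}(C_2)$, together with their boundaries, in stratified general position, so that $\mu_{k_1}(C_1)\otimes\mu_{k_2}(C_2)\in G_2^{(\bar q_1,\bar q_2)}$ and $\mu_2$ applies to it.

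The computation itself is then a short diagram chase: since $C$ is the pure tensor $C_1\otimes C_2$,
\begin{align*}
\mu_k(C)&=\Delta_!\,\bve_k(C)=(\Delta^{(2)})_!\,\bigl(\Delta^{(k_1)}\times\Delta^{(k_2)}\bigr)_!\,\bve_2\bigl(\bve_{k_1}(C_1)\otimes\bve_{k_2}(C_2)\bigr)\\
&=(\Delta^{(2)})_!\,\bve_2\bigl(\mu_{k_1}(C_1)\otimes\mu_{k_2}(C_2)\bigr)=\mu_2\bigl(\mu_{k_1}(C_1)\otimes\mu_{k_2}(C_2)\bigr),
\end{align*}
using (i), then (ii), then (iii) together with the definitions of $\mu_{k_1},\mu_{k_2}$, and finally $\mu_2=(\Delta^{(2)})_!\circ\bve_2$. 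The ``in particular'' clause is the special case $k_1=k-1$, $k_2=1$, in which $\mu_1$ is the identity. I expect the main obstacle to be not this chase but ingredient (i) and, more generally, checking that none of the three identities introduces a stray power of $-1$: the whole reason for the sign corrections to $\bve$ and to the Poincar\'e duality map (Section \ref{S: signs} and diagram \eqref{E: transfer}) is precisely to make these composites hold without extra signs, and confirming it amounts to the sign computations of Appendix A (Lemmas \ref{L: Theta}, \ref{L: eps dual}, \ref{L: mccom}). As in Remark \ref{R: fudging}, I would throughout suppress the excisions that make the transfer composites literally defined.
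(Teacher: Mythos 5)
Your proof is correct in substance but takes a genuinely different route from the paper's. The paper proves this lemma by a single direct sign computation: it expands both $\mu_k(C)$ and $\mu_2(\mu_{k_1}(C_1)\otimes\mu_{k_2}(C_2))$ as explicit composites of $\Delta^*$, $\Upsilon$, and $(\cdot\cap\Gamma)$ with all powers of $-1$ written out, then pushes the $\cap\Gamma$'s across, cancels $\Upsilon_2$ against $\cap\Gamma_2$, invokes Lemma~\ref{L: eps dual} (Poincar\'e duality of $\bve$) to compare $[\bve_{k_1}C_1]\Upsilon_{k_1}\times[\bve_{k_2}C_2]\Upsilon_{k_2}$ with $[\bve_kC]\Upsilon_k$, and finally checks that the accumulated signs add to zero mod $2$. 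You instead modularize the same underlying content into three identities---associativity of $\bve$, functoriality of the transfer under $\Delta^{(k)}=(\Delta^{(k_1)}\times\Delta^{(k_2)})\circ\Delta^{(2)}$, and the transfer/exterior-product compatibility of Lemma~\ref{L: mccom}---and then do a short diagram chase. Conceptually this is cleaner and the resulting identities are reusable; but none of your three ingredients is actually proved in the paper in the form you need. Ingredient (i) is not stated anywhere (your $e_2$-count and the accompanying $\Theta$-bookkeeping do work out, but that is precisely the kind of verification the paper's direct computation performs); ingredient (ii) is never stated, and in the pseudomanifold setting the ``cancellation of the middle duality'' must survive the $\Sigma$-enlargement, the excision steps, and the intermediate domain condition on $X\times X$ in Definition~\ref{D: GPC}; ingredient (iii) is Lemma~\ref{L: mccom}, which the paper only proves for manifolds, and you acknowledge you would need to reprove it for pseudomanifolds. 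So the net effect is that you have traded the paper's monolithic sign-chase for three auxiliary lemmas of independent interest, each of which requires its own sign- and stratification-chase of comparable total length. Neither route is shorter; yours is more structured, the paper's is more self-contained within the lemmas it has already proved.
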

\begin{proof}
We first note that the righthand side of the desired equality is well defined since  the stratified general position requirements for any element of $G_k^P$ imply that for any $S^{-n}D_1\otimes \cdots \otimes S^{-n} D_{k}\in G^P_k$ and any disjoint subcollection $I, J\subset \{1,\ldots k\}$ then $\cap_{i\in I} |D_i|$ and $\cap_{j\in J} |D_j|$ are in stratified general position with respect to each other (and similarly for the necessary collections involving the $\bd D_i$). This can be seen by using the function $R:\bar k\onto  \overline{k-|I|-|J|+2}$ that takes $I$ to $1$, $J$ to $2$, and maps all other indices injectively. (Of course, we cannot in general split an  element of $G_k^P$ into an element of $G_{k_1}^{P_1}\otimes G_{k_2}^{P_2}$, but the element $C$ has an especially simple form.)

Let $\Delta_k:X\into X(k)$ be the diagonal embedding, let $\Gamma$ be the orientation class of $X$, let $\Gamma_k=\varepsilon_k(\Gamma\otimes \cdots\otimes \Gamma)=\Gamma\times \cdots \times \Gamma$, and  let $\Upsilon_k$ be the inverse Poincar\'e-Whitehead-Goresky-MacPherson duality isomorphism to the cap product with $\Gamma_k$. Let $\ell=\sum_i \dim(D_i)$. Then by definition, $ \mu_k(C)$ is represented by $S^{-n}(\Delta_k^*([\bar \varepsilon_k C]\Upsilon_k(-1)^{nk(nk-\ell)})\cap \Gamma (-1)^{n(nk-\ell)}$. Similarly,  letting $\ell_1=\sum_{i=1}^{k_1}\dim(D_i)$ and $\ell_2=\sum_{i=k_1+1}^k D_i$, then $\mu_2( \mu_{k_1}(C_1)\otimes  \mu_{k_2}(C_2))$ is represented by

\begin{align}\label{E: express}
S^{-n}&(\Delta_2^*((\bar \varepsilon_2
(S^{-n}(  
(
\Delta_{k_1}^*
(
[\bar \varepsilon_{k_1} C_1]\Upsilon_{k_1}(-1)^{nk_1(nk_1-\ell_1)}
)
)
\cap \Gamma (-1)^{n(nk_1-\ell_1)})\\
& 
\otimes S^{-n}((\Delta_{k_2}^*([\bar \varepsilon_{k_2} C_2]\Upsilon_{k_2}(-1)^{nk_2(nk_2-\ell_2)}))\cap \Gamma(-1)^{n(nk_2-\ell_2)})))\Upsilon_2(-1)^{2n(nk-\ell)}))\cap \Gamma (-1)^{n(nk-\ell)} \notag
\end{align}

Working mod $2$, the total power of the sign in this expression becomes $-1$ to the
\begin{align*}
nk_1(nk_1-\ell_1)&+n(nk_1-\ell_1)+nk_2(nk_2-\ell_2)+n(nk_2-\ell_2)+2n(nk-\ell)+n(nk-\ell)\\
&\equiv  nk_1-nk_1\ell_1+nk_1+n\ell_1+nk_2+nk_2\ell_2+nk_2+n\ell_2+nk+n\ell\\
&\equiv  nk_1\ell_1+nk_2\ell_2+nk
\end{align*} 
since $\ell_1+\ell_2=\ell$.

Since both of the formulas have the form $S^{-n}(\cdot)\cap \Gamma$, we can compare
$$\Delta_2^*((\bar \varepsilon_2
(S^{-n}(  
(
\Delta_{k_1}^*
(
[\bar \varepsilon_{k_1} C_1]\Upsilon_{k_1}
)
)
\cap \Gamma ) 
\otimes S^{-n}((\Delta_{k_2}^*([\bar \varepsilon_{k_2} C_2]\Upsilon_{k_2}))\cap \Gamma)))\Upsilon_2)$$ 
with
$\Delta_k^*([\bar \varepsilon C]\Upsilon_k).$

We compute 
\begin{align}\label{E: convert}
\Delta_2^*&((\bar \varepsilon_2(S^{-n}((\Delta_{k_1}^*([\bar \varepsilon_{k_1}C_1]\Upsilon_{k_1}
))\cap \Gamma )\otimes S^{-n}((\Delta_{k_2}^*([\bar \varepsilon_{k_2} C_2]\Upsilon_{k_2}))\cap 	
\Gamma)))\Upsilon_2)\\ 
&=(-1)^{n^2+n(n+\ell_1-nk_1)}\Delta_2^*((S^{-2n}(  
(\Delta_{k_1}^*([\bar \varepsilon_{k_1} C_1]\Upsilon_{k_1}))\cap \Gamma )
\times ((\Delta_{k_2}^*([\bar \varepsilon_{k_2} C_2]\Upsilon_{k_2}))\cap \Gamma))\Upsilon_2) \notag\\ \notag
&\qquad\qquad\qquad\qquad\qquad\qquad\qquad\qquad\qquad\qquad \text{def. of $\bve_2$}\\ \notag
&=(-1)^{n^2+n(n+\ell_1-nk_1)+n(nk_2-\ell_2)}\Delta_2^*((S^{-2n}(  
(\Delta_{k_1}^*([\bar \varepsilon_{k_1} C_1]\Upsilon_{k_1}))
\times (\Delta_{k_2}^*([\bar \varepsilon_{k_2} C_2]\Upsilon_{k_2})))\cap\Gamma_2\Upsilon_2))\\ \notag &\qquad\qquad\qquad\qquad\qquad\qquad\qquad\qquad\qquad\qquad\text{pulling $\cap \Gamma$ across}\\ \notag
&=(-1)^{n^2+n(n+\ell_1-nk_1)+n(nk_2-\ell_2)}\Delta_2^*(
((\Delta_{k_1}^*([\bar \varepsilon_{k_1} C_1]\Upsilon_{k_1}))
 \times (\Delta_{k_2}^*([\bar \varepsilon_{k_2} C_2]\Upsilon_{k_2}))))\\ \notag
 &\qquad\qquad\qquad\qquad\qquad\qquad\qquad\qquad\qquad\qquad\hfill\text{cancellation of $\Upsilon_2$ and $\cap \Gamma_2$}\\ \notag
 &=(-1)^{n^2+n(n+\ell_1-nk_1)+n(nk_2-\ell_2)}\Delta_2^*(\Delta_{k_1}^*\times \Delta_{k_2}^*)(([\bve_{k_1}C_1]\Upsilon_{k_1})\times ([\bve_{k_2}C_2]\Upsilon_{k_2}))\\ \notag
&=(-1)^{n^2+n(n+\ell_1-nk_1)+n(nk_2-\ell_2)} \Delta_{k}^*(([\bve_{k_1}C_1]\Upsilon_{k_1})\times ([\bve_{k_2}C_2]\Upsilon_{k_2})), 
 \end{align}
since $(\Delta_{k_1}\times \Delta_{k_2})\circ \Delta_2=\Delta_k$. The total sign here is $-1$ to the 

\begin{align*}
n^2+n(n+\ell_1-nk_1)+n(nk_2-\ell_2)\equiv n\ell+nk \mod 2.
\end{align*}

So it suffices to compare 
$([\bar \varepsilon_{k_1} C_1]\Upsilon_{k_1})\times ([\bar \varepsilon_{k_2} C_2]\Upsilon_{k_1})$ with $[\bar \varepsilon C]\Upsilon_{k}$.
Now suppose we include the signs that make $\Upsilon$ the inverse to the Poincar\'e duality morphism. In other words, we look at $([\bar \varepsilon_{k_1} C_1]\Upsilon_{k_1})(-1)^{nk_1(nk_1-\ell_1)}\times ([\bar \varepsilon_{k_2} C_2]\Upsilon_{k_1})(-1)^{nk_2(nk_2-\ell_2)}$. Then by Lemma \ref{L: eps dual} in Appendix A, this is equivalent to the cochain product of the individual inverse Poincar\'e duals of the individual chains. In other words, this is equal to $(S^{-n}D_1)\Upsilon_1(-1)^{n(n-|D_1|)}\times \cdots \times (S^{-n}D_k)\Upsilon_1(-1)^{n(n-|D_k|)}$, which, again by Lemma \ref{L: eps dual}, is equal to $\bve_k(C)\Upsilon_k(-1)^{nk(nk-\ell)}.$
Thus 
\begin{equation}\label{E: mix}([\bar \varepsilon_{k_1} C_1]\Upsilon_{k_1})\times ([\bar \varepsilon_{k_2} C_2]\Upsilon_{k_1})= (-1)^{nk_1(nk_1-\ell_1)+nk_1(nk_2-\ell_2)+nk(nk-\ell)}[\bve_kC]\Upsilon_k.
\end{equation}

This sign simplifies to $-1$ to the $nk_1\ell_1+nk_2\ell_2+nk\ell$.

Now, the total power of $-1$ in the expression \eqref{E: express} for $\mu_2( \mu_{k_1}(C_1)\otimes  \mu_{k_2}(C_2))$ is $nk_1\ell_1+nk_2\ell_2+nk$, the power of $-1$ from the computation \eqref{E: convert} is $n\ell+nk$, and the power of $-1$  from equation \eqref{E: mix} is $nk_1\ell_1+nk_2\ell_2+nk\ell$. Mod $2$, these add to $nkl+nl$, which is indeed equivalent mod $2$ to the power of $-1$ in the expression for $ \mu_k(C)$ with which we started. The lemma follows. 
\end{proof}

\begin{lemma}
Given chains $D_i$ as in the previous lemma, the iterated product $$ \mu_2( \mu_2(\cdots  \mu_2(S^{-n} D_1\otimes S^{-n}D_2)\otimes S^{-n}D_3)\otimes \cdots S^{-n}D_k)=    \mu_k(S^{-n}D_1\otimes \cdots \otimes S^{-n}D_k).$$
\end{lemma}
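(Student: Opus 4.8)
The plan is to prove the identity by induction on $k$, feeding the inductive step straight into the final assertion of Lemma~\ref{L: iteration}. The base case $k=1$ is the remark that $\mu_1$ is the identity (and $k=2$ is vacuous), so I would assume $k\ge 3$ and that the statement is known for $k-1$.

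The first thing to pin down is that the inner iterated product on the left makes sense, i.e.\ that $C'=S^{-n}D_1\otimes\cdots\otimes S^{-n}D_{k-1}$ lies in $G_{k-1}^{P'}$ with $P'=(\bar p_1,\ldots,\bar p_{k-1})$. This is immediate from the hypothesis $\otimes_{i=1}^{k}S^{-n}D_i\in G_k^P$: the general position conditions cut out by $G_k^P$ are indexed by surjections $R\colon\bar k\onto\bar k'$, and restricting to those $R$ that are injective on $\{k\}$ yields precisely the conditions defining $G_{k-1}^{P'}$ on the first $k-1$ tensor factors. Since perversities are nonnegative, $\bar p_1+\cdots+\bar p_{k-1}\le\bar p_1+\cdots+\bar p_k\le\bar r$, so by Lemma~\ref{L: min perv} there is a perversity $\bar q_1$ with $\bar p_1+\cdots+\bar p_{k-1}\le\bar q_1\le\bar r$, which is exactly what Proposition~\ref{P: well-def prod} requires in order for $\mu_{k-1}(C')$ to be defined.

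With this in place the induction closes quickly. By the inductive hypothesis applied to $D_1,\dots,D_{k-1}$,
\[
\mu_2\bigl(\mu_2(\cdots\mu_2(S^{-n}D_1\otimes S^{-n}D_2)\otimes S^{-n}D_3)\otimes\cdots\otimes S^{-n}D_{k-1}\bigr)=\mu_{k-1}(C').
\]
Applying $\mu_2(\,-\otimes S^{-n}D_k)$ to both sides and then invoking the ``in particular'' clause of Lemma~\ref{L: iteration} in the case $k_1=k-1$, $k_2=1$ (with $\bar q_2=\bar p_k$ and $\bar q_1$ as above, noting $\bar q_1+\bar q_2\le\bar r$ by Lemma~\ref{L: perverse sum}) gives
\[
\mu_2\bigl(\mu_{k-1}(C')\otimes S^{-n}D_k\bigr)=\mu_k(S^{-n}D_1\otimes\cdots\otimes S^{-n}D_k),
\]
and stringing these two equalities together yields the claim.

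The one place that needs attention — though it is not really an obstacle — is the perversity bookkeeping: one must be sure that the intermediate perversity $\bar q_1$ for the first $k-1$ factors exists and that $\bar q_1+\bar p_k\le\bar r$, so that Lemma~\ref{L: iteration} actually applies. This is handled exactly as in the proof of Proposition~\ref{P: GM iter}, via Lemmas~\ref{L: min perv} and~\ref{L: perverse sum} together with their obvious iterates. All the genuine content lives in Lemma~\ref{L: iteration}; the present statement is just the routine passage from it to a parenthesization-independent iterated product.
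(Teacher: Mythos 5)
Your proof is correct and takes exactly the route the paper intends — the paper's own proof is the one‑liner "this follows directly from the preceding lemma and induction," and your argument is simply that one‑liner with the perversity and domain bookkeeping spelled out. The only thing I'd tighten: the phrase "those $R$ that are injective on $\{k\}$" should read "those $R$ with $R^{-1}(R(k))=\{k\}$," and the cleanest justification that $C'=S^{-n}D_1\otimes\cdots\otimes S^{-n}D_{k-1}$ lies in $G_{k-1}^{P'}$ is to cite Proposition~\ref{P: split} directly: it gives $G_k^P\subset G_{k-1}^{P'}\otimes G_1^{\{\bar p_k\}}$, and since $C=C'\otimes S^{-n}D_k$ is a nonzero pure tensor this forces $C'\in G_{k-1}^{P'}$.
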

\begin{proof}
This follows directly from the  preceding lemma and induction.  
\end{proof}

\begin{proof}[Proof of Corollary \ref{C: iteration}]
Let $C_i=S^{-n}D_i$.  Since the Goresky-MacPherson pairing is associative, as noted in the proof of Proposition \ref{P: GM iter}, the arrangement of parentheses is immaterial, and we can use the grouping of the last lemma to consider  $((\cdots ((D_1\pf D_2)\pf D_3)\pf \cdots)\pf D_{k-1})\pf D_k$. By using Proposition \ref{P: GM}, repeatedly, this is equal to $
 S^n\mu_2( \mu_2(\cdots  \mu_2(C_1\otimes C_2)\otimes C_3)\otimes \cdots C_k)$, which, by the preceding lemma, is equal to 
$ S^n\mu_k(C_1\otimes \cdots \otimes C_k)$.

\end{proof}

\section{The Leinster partial algebra structure}\label{S: Leinster}

In this section, we collect the technical definitions concerning partial commutative DGAs and partial restricted commutative DGAs. Then we show that this is what we have, proving Theorem \ref{T: PRCA}, which was described in the introduction. 

The following definition without perversity restrictions originates from Leinster in \cite[Section 2.2]{Lei}, where the structures are referred to as \emph{homotopy algebras}. We follow McClure in \cite{McC}, where they are called \emph{partially defined DGAs} or \emph{Leinster partial DGAs}.

We continue to let $\bar k=\{1,\ldots,k\}$ for $k\geq 1$ and $\bar 0=\emptyset$. Let $\Phi$ be the full subcategory of $\mathbf{Set}$ consisting of the sets $\bar k$, $k\geq 0$. Note that disjoint union gives a functor $\amalg: \Phi\times \Phi\to \Phi$ determined by $\bar k\amalg \bar l=\overline{k+l}$. Given a functor $A$ with domain category $\Phi$, we denote $A(\bar k)$ by $A_k$.

\begin{definition}{(Leinster-McClure)}
A \emph{Leinster partial commutative DGA} is a functor $A$ from $\Phi$ to the category $\mathbf{Ch}$ of chain complexes
together with chain maps
$$\xi_{k,l}: A_{k+l}\to A_k\otimes A_l$$

for each $k,l$ and
$$
\xi_0: A_0\to \Z[0],$$
where $\Z[0]\in \mathbf{Ch}$ is the chain complex with a single $\Z$ term in degree $0$, such that the following conditions hold:

\begin{enumerate}
\item The collection $\xi_{k,l}$ is a natural transformation from 
$A\circ\amalg$ to $A\otimes A$, considered as functors from $\Phi\times \Phi$ 
to $\mathbf{Ch}$.

\item (Associativity) The diagram 
\begin{diagram}
A_{k+l+n} &\rTo^{\xi_{k+l,n}}& A_{k+l}\otimes A_n\\
\dTo^{\xi_{k,l+n}}&&\dTo_{\xi_{k,l}\otimes 1}\\
A_k\otimes A_{l+n} &\rTo^{1\otimes \xi_{l,n}}&A_k\otimes A_l\otimes A_n
\end{diagram}
commutes for all $k,l,n$.

\item (Commutativity) If $\tau: \overline{k+l}\to \overline{k+l}$ is the block permutation 
that transposes $\{1,\ldots,k\}$ and $\{k+1,\ldots,k+l\}$, then the following diagram commutes for all $k,l$:
\begin{diagram}
A_{k+l} &\rTo^{\xi_{k,l}} & A_k\otimes A_l\\
\dTo^{\tau_*}&&\dTo_{\cong}\\
A_{k+l} &\rTo^{\xi_{l,k}} &
A_l\otimes A_k.
\end{diagram}
(Note that the usual Koszul sign convention is in effect for the righthand isomorphism.)

\item (Unit) The diagram
\begin{diagram}
A_k &\rTo^{\xi_{0,k}}&A_{{0}}\otimes A_k\\
&\rdTo_{\cong}&\dTo_{\xi_0\otimes 1}\\
&& \Z[0] \otimes A_k
\end{diagram}
commutes for all $k$.

\item $\xi_0$ and each $\xi_{k,l}$ are quasi-isomorphisms.

\end{enumerate}

\end{definition}

The main theorem of McClure in \cite{McC} is that, given a compact oriented PL manifold $M$, there is a Leinster partial commutative DGA $G$ such that $G_k$ is a quasi-isomorphic subcomplex of the $k$-fold tensor product of PL chain complexes $S^{-n}C_*(M)\otimes \cdots \otimes S^{-n}C_*(M)$ and such that elements of $G_k$ represent chains in sufficient general position so that $G_k$ constitutes the domain of a $k$-fold intersection product.  
Notice the slightly subtle point that the intersection product itself is encoded in the fact that $G$ is a functor. Thus, for example, we have a map $G_k\to G_1=S^{-n}C_*(M)$, and this is precisely the intersection product coming from the umkehr map $\Delta_{k!}$.

For the intersection of intersection chains in a PL pseudomanifold, we must generalize to the notion of a partial restricted commutative DGA. In this setting,  the intersection pairing requires not just general position but compatibility among perversities. The appropriate generalized definition was suggested by Jim McClure and refined by Mark Hovey. 

Fix a non-negative integer $n$, we define a \emph{perverse chain complex} to be a functor from the poset category  $\mf P_n$ of $n$-perversities to the category $\mathbf{Ch}$ of chain complexes. The objects of $\mf P_n$ are $n$-perversities as defined in Definition \ref{D: n-perv}, and there is a unique morphism $\bar q\to \bar p$ if $\bar q(k)\leq \bar p(k)$ for all $k$, $2\leq k\leq n$.
We denote a perverse chain complex by $\{D^{\star}_*\}$. The $\star$ is meant to indicate the input variable for perversities, and we write evaluation as $\{D^{\star}_*\}^{\bar p}=D_*^{\bar p}$ or $\{D^{\star}_*\}^{\bar p}_i=D_i^{\bar p}$

This yields a category $\mathbf{PCh_n}$ of $n$-perverse chain complexes whose morphisms consist of natural transformations of such functors. Explicitly, given two perverse chain complexes $\{D^{\star}_*\}$ and $\{E^{\star}_*\}$, a morphism of perverse chain complexes consists of chain maps $D^{\bar p}_*\to E^{\bar p}_*$ for each perversity $\bar p$ together with commutative diagrams 

\begin{diagram}
D^{\bar q}_*&\rTo& E^{\bar q}_*\\
\dTo&&\dTo\\
D^{\bar p}_*&\rTo &E^{\bar p}_*,
\end{diagram}
whenever $\bar q\leq \bar p$.

We let $\{\Z[0]\}\in \mathbf{PCh_n}$ denote the perverse chain complex that at each perversity consists of a single $\Z$ term in degree $0$.

By \cite{Ho08}, a symmetric monoidal product $\boxtimes$ is obtained by setting  $(\{D^{\star}_*\}\boxtimes \{E^{\star}_*\})^{\bar r}=\displaystyle\dlim_{\bar p+\bar q\leq \bar r} D^{\bar p}_*\otimes E^{\bar q}_*$.  

\begin{definition}
A \emph{ Leinster partial restricted  commutative DGA} is a functor $A$ from $\Phi$ to the category $\mathbf{PCh_n}$ of $n$-perverse chain complexes (with images of objects denoted by $A(\bar k):=\{A^{\star}_{k,*}\}$), or simply $\{A^{\star}_{k}\}$ when we will not be working with individual degrees and no confusion will result,
together with morphisms
$$\zeta_{k,l}: \{A_{k+l}^{\star}\}\to \{A_k^{\star}\}\boxtimes \{A_l^{\star}\}$$

for each $k,l$ and
$$
\zeta_0: \{A_0^{\star}\}\to \{\Z[0]\},$$
such that the following conditions hold:

\begin{enumerate}
\item The collection $\zeta_{k,l}$ is a natural transformation from 
$\{A^{\star}\}\circ\amalg$ to $\{A^{\star}\}\boxtimes \{A^{\star}\}$, considered as functors from $\Phi\times \Phi$ 
to $\mathbf{PCh_n}$.

\item (Associativity) The diagram 
\begin{diagram}[LaTeXeqno]\label{D: assoc}
\{A_{k+l+n}^{\star}\} &\rTo^{\zeta_{k+l,n}}& \{A_{k+l}^{\star}\}\boxtimes \{A_n^{\star}\}\\
\dTo^{\zeta_{k,l+n}}&&\dTo_{\zeta_{k,l}\boxtimes 1}\\
\{A_k^{\star}\}\boxtimes \{A_{l+n}^{\star}\} &\rTo^{1\boxtimes \zeta_{l,n}}&\{A_k^{\star}\}\boxtimes \{A_l^{\star}\}\boxtimes \{A_n^{\star}\}
\end{diagram}
commutes for all $k,l,n$.

\item (Commutativity) If $\tau: \overline{k+l}\to \overline{k+l}$ is the block permutation 
that transposes $\{1,\ldots,k\}$ and $\{k+1,\ldots,k+l\}$, then the following diagram commutes for all $k,l$:
\begin{diagram}
\{A_{k+l}^{\star}\} &\rTo^{\zeta_{k,l}} & \{A_k^{\star}\}\boxtimes \{A_l^{\star}\}\\
\dTo^{\tau_*}&&\dTo_{\cong}\\
\{A_{k+l}^{\star}\} &\rTo^{\zeta_{l,k}} &
\{A_l^{\star}\}\boxtimes \{A_k^{\star}\}.
\end{diagram}

\item (Unit) The diagram
\begin{diagram}
\{A_k^{\star}\} &\rTo^{\zeta_{0,k}}&\{A_{{0}}^{\star}\}\boxtimes \{A_k^{\star}\}\\
&\rdTo_{\cong}&\dTo_{\zeta_0\boxtimes 1}\\
&& \{\Z[0]\} \boxtimes \{A_k^{\star}\}
\end{diagram}
commutes for all $k$.

\item $\zeta_0$ and each $\zeta_{k,l}$ are quasi-isomorphisms.

\end{enumerate}

\end{definition}

We can now restate Theorem \ref{T: PRCA} from the introduction and have it make some sense:

\begin{theorem}[Theorem \ref{T: PRCA}]
For any compact oriented PL stratified pseudomanifold $Y$, the partially-defined intersection pairing on the perverse chain complex $\{S^{-n}I^{\star}C_*(Y)\}$ extends to the structure of  a
 Leinster partial restricted commutative DGA.
\end{theorem}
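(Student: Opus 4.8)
The plan is to construct the functor $A: \Phi \to \mathbf{PCh}_n$ by setting $A(\bar k) = \{G_k^{\star}\}$, where, for each tuple of perversities corresponding to the direct-limit structure, $G_k^{\star}$ is the appropriate perverse refinement of the domain complex $G_k^P$ of Theorem \ref{T: pqi}, and to define the structure maps $\zeta_{k,l}$ by combining (i) the geometric inclusion/subdivision of domains and (ii) the intersection multi-product $\mu$ of Section \ref{S: multiproduct}. Concretely, a morphism $\{G_{k+l}^{\star}\} \to \{G_k^{\star}\} \boxtimes \{G_l^{\star}\}$ in perversity $\bar r$ should be built out of the maps $\mu_{\bar j}$ that collapse a block of factors via the diagonal umkehr map $\Delta_!$: one partitions $\overline{k+l}$ into the first $k$ and last $l$ coordinates, applies no product on one side and the relevant generalized diagonal on the other, so that the functoriality of $A$ (i.e.\ the induced maps on $G$ coming from surjections $R:\bar k \onto \bar k'$) \emph{is} the intersection product, exactly as in McClure's manifold case. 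The direct-limit bookkeeping on the target $\dlim_{\bar p + \bar q \le \bar r} G_k^{\bar p} \otimes G_l^{\bar q}$ is precisely what makes the perversity restriction ($\bar p + \bar q \le \bar r$) compatible with the Goresky-MacPherson product landing in $I^{\bar r}C_*$, as verified in Proposition \ref{P: well-def prod}.

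First I would verify that $\{G_k^{\star}\}$ is genuinely an object of $\mathbf{PCh}_n$: for $\bar q \le \bar p$ there is an inclusion $I^{\bar q}C_* \hra I^{\bar p}C_*$, hence a compatible inclusion of domains $G_k^{(\bar q,\dots)} \hra G_k^{(\bar p,\dots)}$, giving the required functoriality in the perversity variable. Second, I would check that the $\zeta_{k,l}$ are morphisms of perverse chain complexes (commuting squares for $\bar q \le \bar p$); this is immediate because the geometric constructions ($\bar\varepsilon_k$, $\Delta$, the duality isomorphisms $\Upsilon$) are all natural and do not see perversities, while allowability is preserved under the inclusions as noted in Remark following Theorem \ref{T: pqi}. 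Third, naturality of the collection $\{\zeta_{k,l}\}$ with respect to $\amalg$ (axiom 1) follows from the definition of $G$ as a functor on $\Phi$ together with Lemma \ref{L: pres prod} and the fact that $l$th-factor homotopies and the generalized diagonals commute with disjoint-union reindexing. Fourth, the quasi-isomorphism axiom (axiom 5) is exactly Theorem \ref{T: qi}/\ref{T: pqi} after identifying each $\zeta_{k,l}$, up to the quasi-isomorphisms $G_k \simeq (S^{-n}C_*)^{\otimes k}$, with the identity-type map on tensor products; the direct limit over $\bar p + \bar q \le \bar r$ preserves quasi-isomorphisms since the poset $\mf P_n$ is nice (finite in each relevant range) and filtered colimits of quasi-isomorphisms of $\Z$-complexes are quasi-isomorphisms.

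The substantive axioms are associativity (diagram \eqref{D: assoc}), commutativity, and unitality. Associativity should reduce, after untangling the $\boxtimes$, to the associativity of the iterated intersection product, which is Lemma \ref{L: iteration} together with Proposition \ref{P: GM iter} (independence of parenthesization), combined with the associativity of the generalized diagonal maps $R^*$ (composition of surjections) and of the cup product underlying $\Upsilon$. Commutativity reduces to the graded-commutativity of $\bar\varepsilon_k$ under block permutations, which is the corrected Lemma \ref{L: mccom} of Appendix A — this is where the sign corrections of Section \ref{S: signs} are essential, and I would cite them rather than recompute. Unitality is the observation $\mu_1 = \mathrm{id}$ together with $G_0 = \Z[0]$ in each perversity. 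I expect the \textbf{main obstacle} to be making the direct-limit structure of $\boxtimes$ interact cleanly with the geometric domains: one must check that the partially-defined products assemble into an honest morphism into $\dlim_{\bar p+\bar q\le\bar r} G_k^{\bar p}\otimes G_l^{\bar q}$ — i.e.\ that the minimal-perversity construction (Lemmas \ref{L: min perv}, \ref{L: perverse sum}) is compatible with the colimit transition maps — and that all five diagrams, which a priori live at the level of these colimits, commute on the nose and not merely up to coherent homotopy. Verifying this requires care but no genuinely new ideas beyond Hovey's construction \cite{Ho08} and the compatibility results of Section \ref{S: pairing}.
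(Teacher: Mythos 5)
Your proposal contains a genuine structural gap: you have misidentified what the maps $\zeta_{k,l}$ are. In the paper, $\zeta_{k,l}:\{\mf G_{k+l}^{\star}\}\to\{\mf G_k^{\star}\}\boxtimes\{\mf G_l^{\star}\}$ is simply an \emph{inclusion} of subcomplexes (at each collection of perversities, it is the inclusion $G_{k+l}^{P_1\amalg P_2}\hookrightarrow G_k^{P_1}\otimes G_l^{P_2}$), not a composite involving the intersection product $\mu$ or the umkehr map $\Delta_!$. This is exactly why axiom (5) (quasi-isomorphism) can possibly hold; a map built from $\Delta_!$ shifts degree and cannot be a quasi-isomorphism of the kind required. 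Your later remark identifying $\zeta_{k,l}$ ``with the identity-type map on tensor products'' is the right instinct, but it contradicts your first paragraph. In the Leinster framework the intersection product itself is \emph{not} carried by the $\zeta$'s at all: it lives entirely in the functoriality of $\mf G$ on the morphisms of $\Phi$, i.e.\ in the maps $\mf G(R)=R_*$ for surjections $R$, and specifically in the map $\mf G_k\to\mf G_1$ coming from $\bar k\onto\bar 1$.

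Because of this, you misallocate where the real work lies. Since $\zeta_{k,l}$ is an inclusion, axioms (2) (associativity) and (3) (commutativity) are essentially trivial: they follow from the usual associativity and symmetry of the tensor product, not from Lemma \ref{L: iteration}, Proposition \ref{P: GM iter}, or the corrected commutativity Lemma \ref{L: mccom} as you propose. What \emph{is} nontrivial, and what your proposal does not identify, is Proposition \ref{P: split}: the statement that $G_{k+l}^{P_1\amalg P_2}\subset G_k^{P_1}\otimes G_l^{P_2}$. This is not obvious, because an element of $G_{k+l}^{P_1\amalg P_2}$ is a priori only a sum in $\bigotimes_{i=1}^{k+l}S^{-n}I^{\bar p_i}C_*(X)$ that is in stratified general position \emph{as a whole}; one must show that it actually lies in the subspace $G_k^{P_1}\otimes G_l^{P_2}$, which requires the torsion-freeness/cokernel argument and the rewriting lemma in the paper. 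Without this, $\zeta_{k,l}$ is not even defined. The other substantive piece is verifying functoriality of $\mf G$ (needed both for $A$ to be a functor and for axiom (1)), which the paper does by a case analysis over permutations, injections, and surjections, carefully tracking how perversities transform — this is where Lemmas \ref{L: min perv} and \ref{L: perverse sum} enter, and where, for surjections, one needs Proposition \ref{P: split} again to split $G_k^P$ so the transfer on the first block makes sense. Your proposal flags the perversity bookkeeping as a potential obstacle but attaches it to the wrong place (the colimit defining $\boxtimes$, which Hovey's construction already handles) rather than to the functoriality verification.
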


So we must define an appropriate functor $A$ such that $\{A_1^{\star }\}\cong\{S^{-n}IC^{\star}_*(Y)\}$ and maps $\zeta_{k,l}$ and show that the conditions of the definition are satisfied. Furthermore, the $\{A_n^{\star }\}$ should be domains for appropriate intersection pairings, which which will be encoded within the functoriality.

To proceed,  let us say that a collection of $n$-perversities $P=\{\bar p_1,\ldots,\bar p_k\}$ satisfies $P\leq \bar r$ if $\sum_{i=1}^k\bar p_i(j)\leq \bar r(j)$ for all $j\leq n$.  Then we define a functor $\mf G:\Phi\to \mathbf{PCh_n}$ by letting $\mf G_0=\{\Z[0]\}$ and $$\{\mf G_k^{\star}\}(\bar r)=\dlim_{P\leq r} G_k^P,$$ with $G_k^P$ as defined above in Section \ref{S: GP}. This will be our functor ``$A$''. The fact that $\mf G$ is functorial on maps will be demonstrated below in the proof of the theorem.

For the definition of the $\zeta_{k,l}$, we will show in Proposition \ref{P: split}, deferred to below, that 
for two collections of perversities $P_1=\{\bar p_{1},\ldots, \bar p_{k}\}, P_2=\{\bar p_{k+1},\ldots, \bar p_{k+l}\}$, the inclusion $G_{k+l}^{P_1\amalg P_2}$ into the appropriate tensor product of terms $S^{-n}I^{\bar p_i}C_*(X)$ has its image in $G_k^{P_1}\otimes G_l^{P_2}$. Thus \begin{equation}\label{E: inclusion} G_{k+l}^{P_1\amalg P_2} \subset G_k^{P_1}\otimes G_l^{P_2}.\end{equation} Furthermore, as observed by Hovey \cite{Ho08}, the symmetric monoidal product on perverse chain complexes is associative in the strong sense that $$\{\{D^{\star}\} \boxtimes \{E^{\star}\}\boxtimes \{F^{\star}\}\}^{\bar r} \cong \dlim_{\bar p_1+\bar p_2+\bar p_3\leq \bar r} D^{\bar p_1}\otimes E^{\bar p_2}\otimes F^{\bar p_3},$$
independent of arrangement of parentheses, and similarly for products of more terms; the upshot of this is that any time we take a limit over tensor products of limits, it is equivalent to taking a single limit  over tensor products all at once. Thus, applying $\displaystyle \dlim_{\sum_{i}\bar p_{i}\leq \bar r}$ to \eqref{E: inclusion} and recalling that direct limits are exact functors, we obtain the inclusion of $\{\mf G_{k+l}^{\star}\}^{\bar r} $ in $\{\{\mf G_k^{\star}\} \boxtimes \{ \mf G_l^{\star}\}\}^{\bar r}$. Together, these give an inclusion $\zeta_{k,l}:\{\mf G_{k+l}^{\star}\}\into\{\{\mf G_k^{\star}\} \boxtimes \{ \mf G_l^{\star}\}\}$.

We now prove that $\mf G$, together with the maps $\zeta_{k,l}$, is a  Leinster partial restricted commutative DGA. 

\begin{proof}[Proof of Theorem \ref{T: PRCA}]
Assuming condition (1) of the definition  for the moment as well as continuing to assume Proposition \ref{P: split}, 
 in order to check the  other conditions of the definition, it is only necessary to check what happens for a specific set of perversities, since we can then apply the direct limit functor, which is exact. For example, given collections of perversities $P_1, P_2, P_3$ of length $k$, $l$, and $n$, condition (2) holds in the form
\begin{diagram}[LaTeXeqno]\label{D: local assoc}
G_{k+l+n}^{P_1\amalg P_2\amalg P_3} &\rTo& G_{k+l}^{P_1\amalg P_2}\otimes G_n^{P_3}\\
\dTo&&\dTo\\
G_k^{P_1}\otimes G_{l+n}^{P_2\amalg P_3} &\rTo& G_k^{P_1}\otimes G_l^{P_2}\otimes G_n^{P_3}.
\end{diagram}
This is clear from Proposition \ref{P: split} and the usual properties of tensor products. Now, to verify condition (2), we need only verify commutativity of  diagram \eqref{D: assoc} at each perversity $\bar r$, but the evaluation at $\bar r$ is simply the direct limit of diagram \eqref{D: local assoc}  over all collections $P_1,P_2,P_3$ with $P_1\amalg P_2\amalg P_3\leq \bar r$, using again Hovey's associativity property of the monoidal product.

Conditions (3) and (4) follow similarly from standard properties of tensor products, while condition (5) follows from Theorem \ref{T: pqi} and the exactness of the direct limit functor.

Now, for condition (1), we must first demonstrate the functoriality of $\mf G$, which means describing how $\mf G$ acts on maps $R:\bar k\to \bar l$. We abbreviate $\mf G(R)$ by $R_*$. Once again, we can start at the level of a specific $G_k^P$: Given $R$ and $G_k^P$, we must define $R_*:G_k^P\to G_l^{P'}$ for some collection $P'$ of perversities  such that if $P\leq \bar r$ then $P'\leq \bar r$. For each $G_k^P$ with $P\leq \bar r$, this gives us a legal composite map $G_k^P \to G_l^{P'}\to \dlim_{P'\leq \bar r}G_l^{P'}$. 
Once we do this in a way that is compatible with the inclusions $G_k^P\into G_k^Q$ when $P\leq Q\leq \bar r$ (meaning each perversity in $P$ is $\leq$ the corresponding perversity in $Q$), then $R_*:\{\mf G_k^{\star}\}\to \{\mf G_l^{\star}\}$ can be obtained by taking appropriate direct limits. 

So consider a set map $R:\bar k\to \bar l$. In \cite{McC}, McClure defines the morphism $R_*:G_k\to G_l$ on the groups associated to a manifold by proving that the composition $(R_!^*)\bar \epsilon_k$ has its image in $\bar \epsilon_l G_l$ so that defining $R_*$ by $\bar \epsilon_l^{-1}(R^*_!)\bar \epsilon_k$ makes sense.  Here $R^*_!$ is the transfer map associated to the generalized diagonal $R^*$; see \cite{McC} or Sections \ref{S: pairing} and \ref{S: GP}, above.
McClure's proof that we have well-defined maps $R_*:G_k\to G_l$ (from \cite[Section 10]{McC}) continues to hold in our setting so far as general position goes, so that  for pseudomanifolds and stratified general position, $\bar \epsilon_l^{-1}(R^*_!)\bar \epsilon_k$ is well-defined. However, we need next to take the perversities into account.

Any such $R:\bar k\to \bar l$ factors into a surjection, an injection, and permutations, so we can treat each of these cases separately. For permutations, $R=\sigma\in S_k$, we define $R_*$ on $G_k^P\subset S^{-n}I^{\bar p_1}C_*(X)\otimes \cdots \otimes S^{-n}I^{\bar p_k}C_*(X)$ by the (appropriately signed) permutation of terms as usual for tensor products. Since the defining stratified general position condition for $G_k^P$ is symmetric in all terms, the image will lie in $G_k^{\sigma P}$, where $\sigma P$ denotes the appropriately permuted collection of perversities. It is clear that if $P\leq \bar r$ then so is $\sigma P$ and also that this is functorial with respect to the inclusion maps in the poset of collections of perversities, and so $\sigma$ induces a well-defined homomorphism $\mf G_k\to \mf G_k$. 

Next, suppose that $R$ is an injection. Without loss of generality (since we have already considered permutations), we assume that $R(i)=i$ for all $i$, $1\leq i\leq k$. 
In this case, $R^*:X^l\to X^k$ is the projection onto the first $k$ factors. Given an element $\xi\in G_k^P\subset S^{-n}C_*(X)\otimes \cdots \otimes S^{-n}C_*(X)$, it is easy to check that, up to possible signs, $R_*(\xi)=\xi\otimes S^{-n} \Gamma\otimes \cdots S^{-n}\Gamma$, with $l-k$ copies of the shift of the fundamental orientation class $\Gamma$. But $\Gamma\in I^{\bar p}C_*(X)$ for any perversity, in particular for $\bar p=0$. So $R_*(\xi)\in S^{-n}I^{\bar p_1}C_*(X)\otimes \cdots \otimes S^{-n}I^{\bar p_k}C_*(X)\otimes S^{-n}I^{\bar 0}C_*(X)\otimes \cdots \otimes S^{-n}I^{\bar 0}C_*(X)$. Furthermore, since we have noted that stratified general position continues to hold under $R_*$, this must be an element of $G_l^{P\amalg^{l-k} \bar 0}$, where $P\amalg^{l-k} \bar 0=\{\bar p_1,\ldots,\bar p_k,\bar 0, \ldots,\bar 0\}$ adjoins $l-k$ copies of the $\bar 0$ perversity. 
Clearly $P\amalg^{l-k}\bar 0\leq \bar r$ if and only if  $P\leq \bar r$,
so indeed $R_*$ induces a map of $\mf G$. This is also clearly functorial with respect to the poset maps $P\leq Q$.

Finally, we have the case where $R$ is a surjection. All surjections can be written as compositions of permutations and surjections of the form $R(1)=R(2)=1$, $R(k)=k-1$ for $k>2$, so we will assume we have a surjection of this form. In this case, $R^*(x_1,x_2,\ldots,x_l)=(x_1,x_1,x_2,\ldots, x_l)$, and the intuition is that $R_*$ should correspond to the intersection product in the first two terms and the identity on the remaining terms. However, we must be careful to remember that the transfer $R_!$ does not necessarily give us a well-defined intersection map on primitives of the tensor product, only for chains in the tensor product satisfying the general position requirement, which may occur only due to certain cancellations amongst sums of primitives. So we must be careful to make sense of our intuition. Nonetheless, by Proposition \ref{P: split},  $G_k^P\subset G_2^{\bar p_1,\bar p_2}\otimes G_{k-2}^{\bar p_3,\ldots,\bar p(k)}$, so that $\xi\in G_k^P$ can be written as $\sum_{i,j} \eta_j\otimes \mu_i $, where $\eta_j\in G_2^{\bar p_1,\bar p_2}$. Writing $R=R_2\times \text{id}$, where $R_2:\bar 2\to \bar 1$ is the unique function,
it now makes sense that $R_*=R_{2*}\times \text{id}_*$ when applied to $\xi$, so that we obtain $R_*(\xi)=\sum R_{2*}(\eta_j)\otimes \mu_i$. Furthermore,  each $R_{2*}(\eta_j)$ will live in $S^{-n}I^{\bar s}C_*(X)$, where $\bar s$ is the minimal perversity over $\bar p_1$ and $\bar p_2$ (see Section \ref{S: GM}). So, $R_*(\xi)\in S^{-n}I^{\bar s}C_*(X)\otimes  S^{-n}I^{\bar p_3}C_*(X)\otimes \cdots\otimes S^{-n}I^{\bar p_k}C_*(X)$.
Applying Lemma \ref{L: perverse sum}, if $P\leq \bar r$ then $\bar s+\sum_{i\geq 3}\bar p_i\leq \bar r$. The image of $R_*$ is already known to satisfy the requisite stratified general position requirements (see above), and so $R_*$ induces a map from $G_k^P$ to $G_{k-1}^{\bar s,\bar p_3,\ldots,\bar p_k}$, which induces a map on $\mf G$.

We conclude that $\mf G$ is a functor.

The naturality of the $\zeta_{k,l}$ follows immediately: the only thing to check is compatible behavior between $\zeta_{k,l}$ and $\zeta_{k',l'}$ given two functions $R_1:\bar k\to \bar k'$ and $R_2:\bar l\to \bar l'$. But this is now easily checked since the $\zeta$ are inclusions and since the definitions of the maps $\mf G(R)=R_*$ are built precisely upon these inclusions and the ability to separate tensor products into different groupings, which is allowed by Proposition \ref{P: split}.
\end{proof}

Finally, we turn to  the deferred proposition showing that the maps $\zeta$ are induced by well-defined inclusions.

\begin{proposition}\label{P: split}
Let $P=\{\bar p_1,\ldots ,\bar p_{k+l}\}$, $P_1=\{\bar p_1,\ldots ,\bar p_{k}\}$, and $P_2=\{\bar p_{k+1},\ldots ,\bar p_{k+l}\}$. Then  $G_{k+l}^P\subset G_k^{P_1}\otimes G_l^{P_2}$.
\end{proposition}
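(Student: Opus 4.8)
The plan is to separate the statement into a purely homological-algebraic ``perversity'' part, which is automatic, and a ``general position'' part, which is essentially McClure's splitting argument transported to the stratified setting.

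\emph{Disposing of the perversities.} Every group occurring here --- the tensor powers $\mathcal C_m:=(S^{-n}C_*(X))^{\otimes m}$, the complexes $\mathcal A_{P_1}:=S^{-n}IC^{\bar p_1}_*(X)\otimes\cdots\otimes S^{-n}IC^{\bar p_k}_*(X)$, together with $\mathcal A_{P_2}$, $\mathcal A_P$ defined analogously, and all of the $G$'s --- is torsion-free, being a subgroup of a tensor power of $C_*(X)=\varinjlim_T C^T_*(X)$; hence all of them are flat over $\mathbb Z$. Two standard consequences of flatness will be used repeatedly: tensoring with a flat module commutes with finite intersections of subgroups, and for nested subgroups $B\subseteq B'$, $C\subseteq C'$ of flat modules one has $(B\otimes C')\cap(B'\otimes C)=B\otimes C$ inside $B'\otimes C'$ (both follow at once by splicing the two evident short exact sequences and invoking flatness). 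Writing $\mathcal C_{k+l}\cong\mathcal C_k\otimes\mathcal C_l$, applying these identities, and using $G_k^{P_1}=G_k\cap\mathcal A_{P_1}$, $G_l^{P_2}=G_l\cap\mathcal A_{P_2}$, $\mathcal A_{P_1}\otimes\mathcal A_{P_2}=\mathcal A_P$, one obtains
\[
G_k^{P_1}\otimes G_l^{P_2}=(G_k\otimes\mathcal C_l)\cap(\mathcal C_k\otimes G_l)\cap\mathcal A_P\qquad\text{inside }\mathcal C_{k+l}.
\]
Since $G_{k+l}^P=G_{k+l}\cap\mathcal A_P$ by definition, it is enough to prove the perversity-free inclusion $G_{k+l}\subseteq(G_k\otimes\mathcal C_l)\cap(\mathcal C_k\otimes G_l)$.

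\emph{The general position part.} To prove $G_{k+l}\subseteq G_k\otimes\mathcal C_l$ (the other inclusion being symmetric), fix $D\in G_{k+l}$ and choose a triangulation $K$ of $X$, compatible with the stratification and fine enough that $D$ is simplicial and the iterated exterior products are simplicial with respect to the product triangulations. Expand $D=\sum_C w_C\otimes\sigma_C$, where $\sigma_C=\sigma_{c_{k+1}}\otimes\cdots\otimes\sigma_{c_{k+l}}$ ranges over the simplicial basis of $C^K_*(X)^{\otimes l}$ and $w_C\in C^K_*(X)^{\otimes k}$. Because $C^K_*(X)^{\otimes l}$ is free on the $\sigma_C$, the algebra above (intersect $G_k\otimes\mathcal C_l$ with $\mathcal C_k\otimes C^K_*(X)^{\otimes l}$) shows that $D\in G_k\otimes\mathcal C_l$ if and only if every coefficient $w_C$ lies in $G_k$. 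So the task reduces to showing that each $w_C\in G_k$, i.e. that $\bar\varepsilon_k(w_C)$ and $\bar\varepsilon_k(\partial w_C)$ are in stratified general position with respect to $R^*$ for every surjection $R:\bar k\onto\overline{k'}$, $k'<k$. Given such an $R$, let $\widetilde R:\overline{k+l}\onto\overline{k'+l}$ be $R$ on the first $k$ entries and the identity on the last $l$; then $\widetilde R^*=R^*\times\mathrm{id}_{X(l)}$ and $\widetilde R$ is one of the surjections defining $G_{k+l}$. Up to signs and shifts (irrelevant for supports), $\bar\varepsilon_{k+l}$ factors through $\bar\varepsilon_k\otimes\bar\varepsilon_l$ followed by the external product $X(k)\times X(l)=X(k+l)$, so $|\bar\varepsilon_{k+l}(D)|=\bigcup_C|\bar\varepsilon_k(w_C)|\times|\sigma_C|$, where $|\sigma_C|=|\sigma_{c_{k+1}}|\times\cdots\times|\sigma_{c_{k+l}}|$ is a product cell of $X(l)$ and distinct such cells have disjoint interiors. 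Decomposing $X(k+l)$ into strata $Z_1\times Z_2$ and localizing over $X(k)\times\mathrm{int}\sigma_C$, the inequality \eqref{E: gen pos} for $\bar\varepsilon_{k+l}(D)$ with respect to $\widetilde R^*$ translates into the corresponding inequality for $\bar\varepsilon_k(w_C)$ with respect to $R^*$; the condition on $\bar\varepsilon_k(\partial w_C)$ is obtained the same way from the condition on $\bar\varepsilon_{k+l}(\partial D)$, using in addition that $G_{k+l}$ is a subcomplex, so $\partial D\in G_{k+l}$ as well.

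\emph{Main obstacle.} The delicate point is exactly this last localization: one must verify that a failure of stratified general position in some coefficient $w_C$ genuinely appears in $\bar\varepsilon_{k+l}(D)$ and is not masked by the possibly higher-dimensional contribution of another simplex $\sigma_{C'}$ (a coface of $\sigma_C$), and the boundary condition is still touchier, since the simplicial coefficient of $\sigma_C$ in $\partial D$ is $\partial w_C$ plus incidence contributions from the cofaces of $\sigma_C$, so that isolating $\bar\varepsilon_k(\partial w_C)$ requires a careful bookkeeping --- e.g. a downward induction on $\dim\sigma_C$, in which the top-dimensional simplices are automatically unmasked, combined with $\partial D\in G_{k+l}$. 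This is precisely the stratified form of the phenomenon stressed in the introduction, that an element of $G_{k+l}$ cannot in general be written as a sum of tensors whose factors are separately in stratified general position; it is handled along the lines of McClure's argument \cite[Section 10]{McC}, now with stratified general position and the McCrory-type isotopies of Proposition \ref{P: shift} in place of ordinary general position.
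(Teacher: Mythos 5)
Your flatness reduction to the perversity-free inclusion $G_{k+l}\subset(G_k\otimes\mathcal C_l)\cap(\mathcal C_k\otimes G_l)=G_k\otimes G_l$ is correct and is, after unwinding, the same reduction the paper makes: the paper applies the cokernel characterization of a tensor product of subcomplexes (its \cite[Lemma 11.3]{McC} is exactly your nested-flat-subgroup identity), reducing to showing that each simplicial coefficient $\eta_J$ of an element of $G_{k+l}^P$ lies in $G_k^{P_1}$. So up to this point you are tracking the paper.

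The genuine divergence is how the perversity-free inclusion is handled. That inclusion is exactly \cite[Lemma 11.1]{McC}, and the paper's treatment of it is simply to \emph{cite} it, remarking that McClure's manifold argument carries over to the stratified setting; the paper does not re-derive it. You instead attempt a re-derivation by examining the coefficients $w_C$, and you correctly locate the obstruction: the support argument gives stratified general position for each $\bar\varepsilon_k(w_C)$, but the coefficient of $\sigma_C$ in $\partial D$ is $\partial w_C$ plus incidence contributions from cofaces, so stratified general position for $\bar\varepsilon_k(\partial w_C)$ is not immediate. The ``downward induction on $\dim\sigma_C$'' you propose does not close this gap: to pass from the coefficient $u_C$ of $\sigma_C$ in $\partial D$ and the coface coefficients $w_{C'}$ each being in stratified general position, to $\partial w_C=u_C\mp\sum w_{C'}$ being in stratified general position, you would need the stratified general position condition to behave well under linear combinations of chains --- which is precisely the non-linearity the introduction emphasizes one cannot assume (it is the reason $G_k$ is defined by conditions on $D$ and $\partial D$ rather than on primitives). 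The appeal to the McCrory-type isotopies of Proposition \ref{P: shift} is also misplaced here: those isotopies drive Theorem \ref{T: qi} and have no role in this purely algebraic splitting. The clean route, which is what the paper in fact does, is to invoke \cite[Lemma 11.1]{McC} directly after your reduction: write $D\in G_{k+l}\subset G_k\otimes G_l$ as $\sum\mu_I\otimes\nu_J$ with $\mu_I\in G_k$ and $\nu_J\in G_l$, and observe that each simplicial coefficient of $D$ in the last $l$ slots is then a $\Z$-linear combination of the $\mu_I$ and so lies in $G_k$ because $G_k$ is a subcomplex, rather than attempting to reprove 11.1 from scratch.
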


We first need a lemma.

Let $\xi\in G_{k+l}^P\subset S^{-n}I^{\bar p_1}C_*(X) \otimes \cdots \otimes  S^{-n}I^{\bar p_{k+l}}C_*(X)$. We can write $\xi=\sum \xi_{i_1}\otimes \cdots \otimes \xi_{i_{k+l}}$, and we can fix a triangulation of $X$ with respect to which all possible $\xi_{i_j}$ are simplicial chains. (Note: we assume that $\xi_i\in S^{-n}I^{\bar p_i}C_*(X)$ rather than taking $\xi_i\in I^{\bar p_i}C_*(X)$ and then having to work with shifted chains $S^{-n}\xi$ for the rest of the argument; this leads to some abuse of notation in what follows, but this is preferable to dragging hordes of the symbol $S^{-n}$ around even more than necessary). Next, using that each $\xi_{i_j}$ is a sum $\xi_{i_j}=\sum b_{i_{j_k}}\sigma_k$, where the $\sigma_k$ are simplices of the triangulation, we rewrite $\xi$ as $\xi= \sum a_{i_{1}\ldots i_{k+l}}  \xi_{i_1}\otimes \cdots \otimes \xi_{i_k}\otimes \sigma_{i_{k+1}}\otimes \cdots \otimes \sigma_{i_{k+l}}$. In order to do this, we must of course consider $\xi$ as an element of $S^{-n}I^{\bar p_1}C_*(X)\otimes\cdots \otimes S^{-n} I^{\bar p_k}C_*(X)\otimes S^{-n}C_*(X)\otimes \cdots \otimes S^{-n}C_*(X)$. To help with the notation, we let $I$ be a multi-index of $k$ components, and we let $J$ be a multi-index of $l$ components. Then we can write $\xi=\sum_{I,J} a_{I,J}\xi_I\otimes \sigma_J$, where $a_{I,J}\in \Z$, $\xi_I\in S^{-n}I^{\bar p_1}C_*(X)\otimes \cdots\otimes S^{-n}I^{\bar p_k}C_*(X)$  and each $\sigma_J$ is a specific tensor product of simplices $\sigma_{i_{k+1}}\otimes \cdots \otimes \sigma_{i_{k+l}}$. 

Now, we fix a specific multi-index $J$ such that $\sum_I a_{I,J}\xi_I\otimes \sigma_J\neq 0$. Let $\eta_J = \sum_I a_{I,J}\xi_I$ (so $\xi=\sum_J \eta_J\otimes  \sigma_J$). 

\begin{lemma}
$\eta_J\in G_k^{P_1}$.
\end{lemma}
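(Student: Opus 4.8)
The plan is to show that $\bar\varepsilon_k(\eta_J)$ and $\bar\varepsilon_k(\partial\eta_J)$ are in stratified general position with respect to every generalized diagonal $R^*$, $R:\bar k\onto\bar k'$. The key idea is that $\eta_J$ is extracted from $\xi$ by ``reading off the coefficient'' of a fixed tensor of simplices $\sigma_J$ in the last $l$ slots, so that the support of $\bar\varepsilon_k(\eta_J)$ can be controlled by the support of $\bar\varepsilon_{k+l}(\xi)$ intersected with $X(k)\times|\sigma_J|\subset X(k+l)$. More precisely, first I would fix a triangulation $K$ as in the setup and work simplicially; write $\xi=\sum_{I,J}a_{I,J}(\otimes_{i\le k}\tau_{i})\otimes\sigma_J$ with all $\tau_i,\sigma_J$ simplices of $K$. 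Because the $\sigma_J$ for distinct $J$ have distinct supports (or rather: their tensor products are linearly independent generators of the simplicial chain complex), the coefficient of $\sigma_J$ picks out exactly $\eta_J=\sum_I a_{I,J}\,\tau_{I}$, and hence $|\bar\varepsilon_k(\eta_J)|\subset \pi_{\bar k}\big(|\bar\varepsilon_{k+l}(\xi)|\cap (X(k)\times|\sigma_J|)\big)$, where $\pi_{\bar k}:X(k+l)\to X(k)$ is the projection onto the first $k$ factors. In fact, since $\bar\varepsilon$ is a cross product, $|\bar\varepsilon_{k+l}(\xi)|\cap(X(k)\times|\sigma_J|)$ is exactly a union of sets of the form $|\bar\varepsilon_k(\tau_I)|\times|\sigma_J|$ over the $I$ with $a_{I,J}\ne 0$, so the projection recovers $|\bar\varepsilon_k(\eta_J)|$ on the nose (up to taking the union over such $I$).

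Next I would transfer the general position inequality. Fix $R:\bar k\onto\bar k'$ and a stratum component $Z=X_{d_1}\times\cdots\times X_{d_k}$ of $X(k)$ with $d_i=d_\ell$ whenever $R(i)=R(\ell)$; let $\sigma_J$ have its $m$-th factor $\sigma_{i_{k+m}}$ meeting the stratum $X_{e_m}$ of $X$ in dimension $\delta_m:=\dim(\sigma_{i_{k+m}}\cap X_{e_m})$, and set $\hat Z=Z\times X_{e_1}\times\cdots\times X_{e_l}$, a stratum component of $X(k+l)$. Extend $R$ to $\hat R:\overline{k+l}\onto\overline{k'+l}$ that agrees with $R$ on $\bar k$ and is the identity/injection on the last $l$ symbols; then $\hat R$ is surjective, $\hat Z$ satisfies the diagonal-compatibility condition for $\hat R$, and $\xi\in G_{k+l}^P$ gives the inequality \eqref{E: gen pos} for $\bar\varepsilon_{k+l}(\xi)$ on $\hat Z$ with respect to $\hat R^*$. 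Now I intersect everything with $X(k)\times|\sigma_J|$: on the first $k$ factors this is invisible to the inequality, and on the last $l$ factors the map $\hat R^*$ acts bijectively, so both sides of \eqref{E: gen pos} for $\hat R$ and $\hat Z$ simply have the contributions $\sum_m\delta_m$ added to $\dim(A\cap\hat Z)$ (where $A=|\bar\varepsilon_{k+l}\xi|\cap(X(k)\times|\sigma_J|)$) and equal contributions to the two sums over codimensions, which cancel. After projecting to $X(k)$ — which does not change dimensions of the relevant sets since the last $l$ coordinates are pinned to a fixed simplex — one is left with exactly the inequality \eqref{E: gen pos} for $|\bar\varepsilon_k(\eta_J)|$ on $Z$ with respect to $R^*$. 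The same argument applied to $\partial\xi$, together with the Leibniz rule $\partial(\eta_J\otimes\sigma_J)$-type bookkeeping (the coefficient of a fixed $\sigma_{J'}$ in $\partial\xi$ is, up to sign, $\sum_{J}(\text{incidence})\,\partial\eta_J + \sum\eta_J\cdot(\text{boundary coefficients})$), shows $\bar\varepsilon_k(\partial\eta_J)$ is in stratified general position as well; here one uses that $\bar\varepsilon_{k+l}(\partial\xi)$ is in stratified general position because $\xi\in G_{k+l}^P$.

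The main obstacle I anticipate is the boundary bookkeeping: $\partial\eta_J$ is not simply ``the $\sigma_J$-coefficient of $\partial\xi$,'' because $\partial$ acting on the last $l$ tensor factors of $\xi$ mixes different $J$'s (a boundary face of $\sigma_{i_{k+m}}$ is some other simplex $\sigma_{J'}$), while $\partial$ acting on the first $k$ factors produces $\partial\eta_J\otimes\sigma_J$. So I need to isolate the part of $\partial\xi$ of the form (chain in first $k$ slots)$\,\otimes\,\sigma_J$ and argue that its first-$k$-slot component is exactly $\pm\partial\eta_J$ plus terms that are themselves in $G_k^{P_1}$-position (coming from products $\tau_I\otimes(\text{face of }\sigma_J)$ that happen to re-land on $\sigma_J$ — generically there are none, but one should not rely on genericity). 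The clean way is: since $\bar\varepsilon_{k+l}(\xi)$ and $\bar\varepsilon_{k+l}(\partial\xi)$ are both in stratified general position for all surjections, run the projection/restriction argument of the previous paragraph on $\partial\xi$ directly to conclude that the $\sigma_J$-slice of $\partial\xi$, projected to $X(k)$, is in stratified general position; then observe that $\partial\eta_J$ equals that slice up to adding chains supported on $|\bar\varepsilon_k(\eta_J)|$ (the faces-of-$\sigma_J$ contributions land in supports already controlled), so it too is in general position. I would present this as the one point requiring care and handle it by choosing the triangulation $K$ fine enough that this is transparent, exactly as McClure does in \cite{McC}.
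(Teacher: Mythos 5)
The paper's proof is much shorter and purely algebraic: it invokes McClure's Lemma~11.1 (that $G_{k+l}\subset G_k\otimes G_l$, proved for manifolds but asserted to carry over to pseudomanifolds) as a black box, and then exploits uniqueness of the expansion of $\xi$ in simplex products in the last $l$ slots. Given any decomposition $\xi=\sum\mu_I\otimes\nu_J$ with $\mu_I\in G_k$ and $\nu_J\in G_l$, further splitting the $\nu_J$ into simplices recovers exactly the $\eta_J$'s as $\mathbb{Z}$-linear combinations of the $\mu_I$'s, so $\eta_J\in G_k$; combined with the trivial observation that each $\eta_J$ is a sum of tensor products of allowable chains, one gets $\eta_J\in G_k^{P_1}$. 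No dimension counting is needed and the boundary condition on $\partial\eta_J$ comes for free since $G_k$ is a subcomplex.

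Your proof takes a genuinely different and much longer route: a direct dimension count, which is closer in spirit to what one would have to do to \emph{prove} McClure's Lemma~11.1 rather than use it. Unfortunately there is a gap in the crucial restriction step. Extending $R:\bar k\onto \bar k'$ to $\hat R:\overline{k+l}\onto\overline{k'+l}$ and applying $\xi\in G_{k+l}^P$ on the stratum $\hat Z=Z\times X_{e_1}\times\cdots\times X_{e_l}$ gives
\[
\dim\bigl((\hat R^*)^{-1}(|\bar\varepsilon_{k+l}\xi|\cap\hat Z)\bigr)\le \dim\bigl(|\bar\varepsilon_{k+l}\xi|\cap\hat Z\bigr)+c,
\]
where the right-hand side is $\max_{J'}\bigl(\dim(|\bar\varepsilon_k\eta_{J'}|\cap Z)+\dim(|\sigma_{J'}|\cap\prod_m X_{e_m})\bigr)+c$. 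Intersecting with $X(k)\times|\sigma_J|$ isolates the $J'=J$ piece on the \emph{left}, giving a lower bound $\dim((R^*)^{-1}(|\bar\varepsilon_k\eta_J|\cap Z))+\sum_m\delta_m$, but it does not isolate $J'=J$ on the \emph{right}: you are still comparing against a maximum over all $J'$. To conclude the desired inequality for $\eta_J$ you would need that maximum to be achieved at $J'=J$. On a closed manifold this happens automatically, because $\xi$ homogeneous of degree $d$ forces every nonzero $\eta_{J'}\otimes\sigma_{J'}$ to have full-dimensional support summing to $d+(k+l)n$; but on a pseudomanifold the terms $\dim(|\bar\varepsilon_k\eta_{J'}|\cap Z)$ and $\dim(|\sigma_{J'}|\cap\prod_m X_{e_m})$ vary with $J'$ in an uncorrelated way, and there is no reason the $J$-slice dominates. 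A general position inequality for a union does not restrict to a subset, so this step needs a genuinely different argument.

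A second, smaller issue is the boundary. The $\sigma_J$-coefficient $\zeta_J$ of $\partial\xi$ equals $\partial\eta_J$ plus signed copies of $\eta_{J''}$ for those $J''$ with $\sigma_J$ a face of $\partial\sigma_{J''}$; these correction terms are supported on $|\bar\varepsilon_k\eta_{J''}|$, not on $|\bar\varepsilon_k\eta_J|$ as you state. Even granting general position of $\zeta_J$ and of each $\eta_{J''}$, general position of $\partial\eta_J$ does not follow by containment of supports: again, general position does not descend to subsets, and although one can recover it via a full-dimensionality argument on a manifold, the stratum-by-stratum version needed here is not established. The paper's structure sidesteps both problems entirely: once $\eta_J\in G_k$ is known, $\partial\eta_J\in G_k$ is automatic because $G_k$ is a subcomplex.
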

\begin{proof}
On the one hand, it is clear that each $\eta_J$ is a sum of tensor products of intersection chains, allowable with respect to the appropriate perversities. This is because in defining the $\eta_J$, we only split apart $\xi$ in the last $l$ slots, so that each $\eta_J$ is an appropriate sum of tensor products of chains $\xi_{i_j}$, $1\leq j\leq k$. 

On the other hand, $G_{k+l}^{P}\subset G_{k+l}$, and, by \cite[Lemma 11.1]{McC}, $G_{k+l}\subset G_k\otimes G_l$ (that argument is for manifolds, but works just as well here). So, as an element of $G_{k+l}$, $\xi$ can be rewritten as $\sum \mu_I\otimes \nu_J$, where $\mu_I\in G_k$ and $\nu_J\in G_l$. But now rewriting again by splitting all the $\nu_J$ up into tensor products of simplices, we recover $\xi=\sum_J \eta_J\otimes  \sigma_J$, but we now see that each $\eta_J$ can also be written as a sum of $\mu_I$s, each of which is in $G_k$. Hence each $\eta_J$ is in both $G_k$ and $S^{-n}I^{\bar p_1}C_*(X)\otimes \cdots \otimes S^{-n}I^{\bar p_k}C_*(X)$. Thus each is in $G_k^{P_1}$.  
\end{proof}

\begin{proof}[Proof of Proposition \ref{P: split}]
Consider the inclusions $i_1: G_k^{P_1}\into \otimes_{i=1}^k S^{-n}C_*(X)$ and $i_2: G_l^{P_2}\into \otimes_{i=1}^l S^{-n}C_*(X)$. Let $q_1$ be the projection $\otimes_{i=1}^k S^{-n}C_*(X) \to \cok(i_1)$ and similarly for $q_2$. Note that $\cok(i_1),\cok(i_2)$ are torsion free, since if any multiple of a chain $\xi$ is in stratified general position, then $\xi$ itself must also be in stratified general position and similarly for the allowability conditions defining the intersection chain complexes. Now, by basic homological algebra (see, e.g., \cite[Lemma 11.3]{McC}), $G_k^{P_1}\otimes G_l^{P_2}$ is precisely the kernel of $$q_1\otimes \text{id}+ \text{id}\otimes q_2:   \bigotimes_{i=1}^k S^{-n}C_*(X) \otimes  \bigotimes_{i=1}^l S^{-n}C_*(X)\to \left(\cok(i_1)\otimes \bigotimes_{i=1}^l S^{-n}C_*(X)\right)\oplus \left(\bigotimes_{i=1}^l S^{-n}C_*(X)\otimes \cok(i_2) \right) $$. 

So, if $\xi\in G_{k+l}^P$, we need only show that $\xi$ is in the kernel of this homomorphism, and it suffices to show that it is in the kernels of $q_1\otimes \text{id}$ and $\text{id}\otimes q_2$ separately. We will show the first; the argument for the second is the same.

So, we consider $\xi \in G_{k+l}^P\subset \bigotimes_{i=1}^k S^{-n}C_*(X) \otimes  \bigotimes_{i=1}^l S^{-n}C_*(X)$ and consider the image in $\cok(i_1)\otimes \bigotimes_{i=1}^l S^{-n}C_*(X)$ under $q_1\otimes \text{id}$. As above, we can rewrite $\xi$ here as $\sum \eta_J\otimes \sigma_J$. But now it follows from the preceding lemma that $\eta_J\in G_k^{P_1}$ and thus represents $0$ in $\cok(i_1)$. So $\xi\in \ker (q_1\otimes \text{id})$. 

By analogy, $\xi\in \ker ( \text{id}\otimes q_2)$, and we are done.
\end{proof}

\section{The intersection pairing in sheaf theoretic intersection homology}\label{S: sheaves}

In \cite{GM1}, Goresky and MacPherson defined the intersection homology intersection pairing  geometrically for  compact oriented PL pseudomanifolds. They used McCrory's theory of stratified general position \cite{Mc78} to show that any two PL intersection cycles are intersection homologous to cycles in stratified general position. The intersection of cycles is then  well-defined, and if $C\in I^{\bar p}C_*(X)$ and $D\in I^{\bar q}C_*(X)$ are in stratified general position, the intersection $C\pf D$  is in $I^{\bar r}C_*(X)$ for any $\bar r$ with $\bar r\geq \bar p+\bar q$. By \cite{GM2}, however, intersection homology duality was being realized on topological pseudomanifolds as a consequence of Verdier duality of sheaves in the derived category $D^b(X)$, and the intersection pairing was constructed via a sequences of extensions of morphisms from $X-\Sigma$ to all of $X$  (see also \cite{Bo}). The resulting morphism in $\Mor_{D^b(X)}(\mc I^{\bar p}\mc C_*\overset{L}{\otimes} \mc I^{\bar q}\mc C_*, \mc I^{\bar r}\mc C^*)$ indeed yields a pairing $I^{\bar p}H_i(X)\otimes I^{\bar q}H_j(X)\to I^{\bar r}H_{i+j-n}(X)$, but it is not completely obvious  that this pairing should agree with the earlier geometric one on PL pseudomanifolds. In this section, we demonstrate that these pairings do, indeed, coincide. While this is no doubt ``known to the experts,'' I know of no prior written proof. Furthermore, using the domain $G$ constructed above, we provide a ``roof'' in the category of sheaf complexes on $X$ that serves as a concrete representative of the derived category intersection pairing morphism.

\medskip

We first recall that, as noted in Remark \ref{R: gen sup},
our general position theorems of Section \ref{S: GP}
 hold just as well if we consider instead the complexes $C_*^{\infty}(X)$ and $I^{\bar p}C_*^{\infty}(X)$. In fact, the definitions of stratified general position carry over immediately, and all homotopies constructed in the proof of Theorems \ref{T: qi} and \ref{T: pqi}  are proper so that  they yield well-defined maps on these locally-finite chain complexes. The proofs that $G_k$ and $G_k^P$ are quasi-isomorphic to the appropriate tensor products is the same. We can also consider ``mixed type'' $G_k$s that are quasi-isomorphic to $S^{-n}I^{\bar p_1}C_*^{\infty}(X)\otimes \cdots \otimes S^{-n}I^{\bar p_j}C_*^{\infty}(X)\otimes S^{-n}I^{\bar p_{j+1}}C_*^{c}(X)\otimes \cdots \otimes S^{-n}I^{\bar p_k}C_*^{c}(X)$. 
 
For an open $U\subset X$, let $G^P_{k}(U)$ denote $G_k^P$ with respect to the pseudomanifold $U$.
Then $G_k^{P,\infty}$ is a contravariant functor from the category of open subsets of $X$ and inclusions to the category of chain complexes and chain maps. This is immediate, since if $C\in S^{-n}I^{\bar p_1}C^{\infty}_*(X)\otimes \cdots \otimes S^{-n}I^{\bar p_k}C^{\infty}_*(X)$ is such that $\bar \varepsilon_k(C)$ is in general position with respect to the appropriate diagonal maps, then certainly $\bar \varepsilon_k(C)|_{U(k)}=\bar \varepsilon_k(C|_U)$  maintains its general position, where $C|_U$ is the restriction of $C$ to $S^{-n}I^{\bar p_1}C^{\infty}_*(U)\otimes \cdots \otimes S^{-n}I^{\bar p_k}C^{\infty}_*(U)$. It is also clear that such restriction is functorial. 
Let $\mc G^{P}_{k,*}$ be the sheafification of the presheaf $G_{k,*}^{P,\infty}: U\to G_{k,*}^{P,\infty}(U)$. Note that the $k$ here denotes the number of terms in the tensor product, while $*$ is the  dimension index.

Let $\mc I^{P}\mc C_{*}$ be the sheafification of the presheaf $I^PC_{*}:U\to S^{-n}I^{\bar p_1}C^{\infty}_{*}(U)\otimes \cdots \otimes S^{-n}I^{\bar p_k}C^{\infty}_{*}  (U)$. This is the  tensor product of the sheaves $\mc I^{\bar p_i}\mc C_{*}$ with $\mc I^{\bar p_i}\mc C_{*}(U)=S^{-n}I^{\bar p_i}C^{\infty}_{*}(U)$, which are the (degree shifted) intersection chain sheaves of \cite{GM2} (see also \cite[Chapter II]{Bo}). 

\emph{N.B. We build the shifts into the definitions of all sheaves and of the presheaf $IC^P_*$ (which, after all, is not unusual in intersection cohomology with certain indexing schemes - see \cite{GM2}). However, for chain complexes of a single perversity, $I^{\bar p}C_*(X)$  continues to denote the unshifted complex, and we write in any shifts as necessary.}

\begin{lemma}\label{L: qi}
The inclusion of presheaves $G_{k,*}^{P,\infty}\into I^PC^{\infty}_{*}$ induces a quasi-isomorphism of sheaves $\mc G^P_{k,*}\to \mc I^{P}\mc C_*$.
\end{lemma}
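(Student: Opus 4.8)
The plan is to deduce the statement from Theorem \ref{T: pqi} together with the fact that sheafification preserves quasi-isomorphisms, the only real work being a stalkwise comparison. First I would recall that a morphism of complexes of sheaves is a quasi-isomorphism precisely when it induces an isomorphism on all stalks, and that the stalk at $x\in X$ of the sheafification of a presheaf $F$ is the direct limit $\varinjlim_{U\ni x} F(U)$ over open neighborhoods of $x$. Thus it suffices to show that for each $x\in X$ the map
\[
\varinjlim_{U\ni x} G_{k,*}^{P,\infty}(U)\longrightarrow \varinjlim_{U\ni x} I^PC^{\infty}_*(U)
\]
is a quasi-isomorphism, where the right-hand side computes the stalk of $\mc I^P\mc C_*$.

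Second, since homology commutes with direct limits, this reduces to showing that for each $U$ the inclusion $G_{k,*}^{P,\infty}(U)\hookrightarrow I^PC^{\infty}_*(U)$ is a quasi-isomorphism, \emph{compatibly} with the restriction maps, and then passing to the colimit. By construction $G_{k,*}^{P,\infty}(U)=G_k^P(U)$ is exactly the domain complex built in Section \ref{S: GP} for the pseudomanifold $U$ (an open subset of a stratified PL pseudomanifold is again one), now using the locally-finite chain complexes $I^{\bar p_i}C^{\infty}_*(U)$ in place of the compactly supported ones. As pointed out in Remark \ref{R: gen sup} and in the paragraph preceding this lemma, the proof of Theorem \ref{T: pqi} goes through verbatim for Borel--Moore chains: the general-position definitions are unchanged and all the homotopies produced in the proof of Proposition \ref{P: filtered qi} (via Lemma \ref{L: homotopy} and Proposition \ref{P: shift}) are proper and stratum-preserving, hence induce well-defined maps and chain homotopies on the locally-finite complexes. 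Therefore for every open $U$ the inclusion $G_k^P(U)\hookrightarrow I^PC^{\infty}_*(U)$ is a quasi-isomorphism.

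Third, I would observe that this family of quasi-isomorphisms is natural in $U$: both presheaves restrict functorially (as noted just above, restriction of a chain in stratified general position to a smaller open set preserves general position and allowability), and the inclusions commute with restriction by inspection. Hence we have a morphism of presheaves of chain complexes that is a quasi-isomorphism sectionwise; taking the colimit over $U\ni x$ — an exact functor — yields an isomorphism on homology of stalks, i.e. the induced sheaf map $\mc G^P_{k,*}\to\mc I^P\mc C_*$ is a quasi-isomorphism.

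\textbf{Main obstacle.} The only genuine point requiring care — and the step I expect to be the crux — is verifying that Theorem \ref{T: pqi} really does hold on each open $U$ with Borel--Moore coefficients, uniformly enough to be natural. One must check that $U$ inherits a stratified PL pseudomanifold structure, that the isotopy of Proposition \ref{P: shift} and the factor homotopies can be carried out properly on $U$ (so that infinite, locally finite chains are sent to infinite, locally finite chains and the homotopies converge locally), and that none of these choices obstruct the restriction maps. All of this is asserted in Remark \ref{R: gen sup}; the proof here is essentially the bookkeeping confirming that "the necessary modifications are fairly direct," after which the stalkwise colimit argument is purely formal.
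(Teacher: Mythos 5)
Your proof is correct and follows essentially the same argument as the paper: the paper's proof likewise invokes the sectionwise quasi-isomorphism of $G_{k,*}^{P,\infty}(U)\hookrightarrow I^PC^\infty_*(U)$ from Section \ref{S: GP} (in its Borel--Moore form, per Remark \ref{R: gen sup}) and then passes to direct limits over neighborhoods to conclude isomorphism on stalk homology. You have merely spelled out the formal steps (stalkwise criterion, exactness of colimits, naturality under restriction) that the paper leaves implicit.
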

\begin{proof}
By the results of Section  \ref{S: GP}, each inclusion $G_{k,*}^{P,\infty}(U)\into I^PC^{\infty}_{*}(U)$ is a quasi-isomorphism. Taking direct limits over neighborhoods of each point $x\in X$ therefore yields isomorphisms of stalk cohomologies.
\end{proof}

\begin{corollary}
For any system of supports $\Phi$, the sheaf map of the lemma induces a hyperhomology isomorphism for each $U$,
$\H^{\Phi}_{*}(U;\mc G^{P}_{k,*})\to \H^{\Phi}_*(U;\mc I^{P}\mc C_{*}).$
\end{corollary}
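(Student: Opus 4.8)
The plan is to obtain this as a formal consequence of Lemma \ref{L: qi} together with the fact that hyperhomology with supports is a functor on the derived category of sheaves, so that quasi-isomorphisms of sheaf complexes induce isomorphisms on $\H^\Phi_*$. Recall that $\H^\Phi_*(U;\mc F_*)$ is the homology of $R\Gamma_\Phi(U;-)$ applied to the complex of sheaves $\mc F_*$; it may be computed by replacing $\mc F_*$ by a quasi-isomorphic complex of $\Phi$-soft (or injective) sheaves and then taking $\Gamma_\Phi(U;-)$. Both $\mc G^P_{k,*}$ and $\mc I^P\mc C_*$ are bounded complexes of sheaves: after the degree shift, each Borel--Moore intersection chain complex $S^{-n}I^{\bar p_i}C^\infty_*(U)$ is concentrated in degrees between $-n$ and $0$ since $X$ is $n$-dimensional, hence so are the $k$-fold tensor products and the associated sheaves. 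Thus there are no convergence subtleties.

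First I would note that, by Lemma \ref{L: qi}, the map of sheaf complexes $\mc G^P_{k,*}\to \mc I^P\mc C_*$ is a quasi-isomorphism, i.e.\ it induces isomorphisms on all the homology sheaves $\mc H_q$; restricting to any open $U\subset X$ preserves this, restriction being exact. A quasi-isomorphism of bounded complexes of sheaves is by definition an isomorphism in $D^b(X)$, so applying the functor $R\Gamma_\Phi(U;-)\colon D^b(X)\to D(\mathbf{Ab})$ and taking homology yields the asserted isomorphism $\H^\Phi_*(U;\mc G^P_{k,*})\to \H^\Phi_*(U;\mc I^P\mc C_*)$.

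Alternatively, and more concretely, I would invoke the hyperhomology spectral sequence
\[
E^2_{p,q}=H^\Phi_p\bigl(U;\mc H_q(\mc F_*)\bigr)\Longrightarrow \H^\Phi_{p+q}(U;\mc F_*),
\]
which is natural in $\mc F_*$ and convergent by the boundedness noted above. Lemma \ref{L: qi} says precisely that the map of the corollary induces isomorphisms $\mc H_q(\mc G^P_{k,*})\to \mc H_q(\mc I^P\mc C_*)$ for all $q$, hence an isomorphism of $E^2$-pages, hence (by the comparison theorem for spectral sequences) an isomorphism of the abutments.

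There is no genuine obstacle here; the only point requiring a little care is the bookkeeping that legitimizes the argument — namely that the sheaf complexes are bounded, so that one works in $D^b(X)$ and the spectral sequence converges, and that the intersection chain sheaves are of a type (c-soft, as in \cite{Bo}) adapted to computing $\Phi$-hyperhomology. Both facts are standard, the first from finite-dimensionality of $X$ and the second from the corresponding property of the Borel--Moore chain sheaf; with them in hand, the corollary follows formally from Lemma \ref{L: qi}.
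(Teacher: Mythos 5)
Your proposal is correct, and since the paper leaves this corollary unproved (treating it as a standard consequence of the lemma), your argument simply supplies the routine justification the paper implicitly relies on: a quasi-isomorphism of bounded complexes of sheaves is an isomorphism in $D^b(X)$, hence induces an isomorphism under $R\Gamma_\Phi(U;-)$. Your boundedness observation ($S^{-n}I^{\bar p_i}C_*^\infty$ is concentrated in degrees $-n$ to $0$, so the $k$-fold tensor product and its subcomplex $G_k^{P,\infty}$ live in degrees $-nk$ to $0$) and the alternate spectral-sequence argument are both accurate and adequate.
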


\begin{proposition}\label{P: product}
If $\sum_i \bar p_i\leq \bar r$, then the intersection product $\mu_k$ induces a sheaf map  $m: \mc G^P_{k,*}\to \mc I^{\bar r}\mc C_*$. 
\end{proposition}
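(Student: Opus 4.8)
The plan is to show that the assignment $U \mapsto \mu_k \colon G_{k,*}^{P,\infty}(U) \to S^{-n}I^{\bar r}C_*^{\infty}(U)$ defines a morphism of presheaves, and then to pass to sheafifications. Since $\mc I^{\bar r}\mc C_*$ is by definition the sheafification of the presheaf $U \mapsto S^{-n}I^{\bar r}C_*^{\infty}(U)$, and sheafification is functorial, a morphism of presheaves $G_{k,*}^{P,\infty} \to I^{\bar r}C_*^{\infty}$ will induce the desired sheaf map $m \colon \mc G^P_{k,*} \to \mc I^{\bar r}\mc C_*$. The key point to verify is therefore two-fold: first, that $\mu_k$ as constructed in Section \ref{S: multiproduct} makes sense in the Borel--Moore (locally finite) setting, landing in $S^{-n}I^{\bar r}C_*^{\infty}(U)$ whenever $\sum_i \bar p_i \leq \bar r$; and second, that it commutes with the restriction maps to smaller open sets.

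First I would address well-definedness on each open $U$. The construction of $\mu_k = \Delta_! \circ \bar\varepsilon_k$ in Definition \ref{D: trans shift} and Proposition \ref{P: well-def prod} is entirely local and dimension-theoretic: it uses the exterior product $\bar\varepsilon_k$, the duality isomorphisms realized by (signed) cap products with fundamental classes, the pullback $\Delta^*$, and the identification of chains with relative homology classes. Because $\Delta \colon X \to X(k)$ is a proper map, $\Delta^*$ behaves well on Borel--Moore chains, and, as already observed in the Remark following Proposition \ref{P: well-def prod} and in Remark \ref{R: gen sup}, one uses ordinary cohomology together with Borel--Moore homology and the corresponding Poincar\'e duality. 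Since $U$ is itself an oriented PL stratified pseudomanifold (with singular set $\Sigma_X \cap U$), the same argument of Proposition \ref{P: well-def prod} applies verbatim: the general position and codimension estimates there are pointwise in the strata, so the hypothesis $\sum_i \bar p_i \leq \bar r$ guarantees that $\dim(\Delta^{-1}(|\bar\varepsilon_k C|) \cap \Sigma_X)$ and $\dim(\Delta^{-1}(|\bd \bar\varepsilon_k C|) \cap \Sigma_X)$ satisfy the bounds of Definition \ref{D: GPC}, and that the resulting chain is $\bar r$-allowable. Hence $\mu_k$ gives a well-defined degree-$0$ chain map $G_{k,*}^{P,\infty}(U) \to S^{-n}I^{\bar r}C_*^{\infty}(U)$.

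Next I would check naturality with respect to inclusions $V \hookrightarrow U$ of open sets. This amounts to the compatibility of each ingredient of $\mu_k$ with restriction of Borel--Moore chains: $\bar\varepsilon_k$ commutes with restriction since it is defined simplex-by-simplex; the duality isomorphisms (cap products with fundamental classes) are natural with respect to restriction to open subsets because the fundamental class of $U$ restricts to that of $V$; and $\Delta^*$ likewise commutes with restriction because the diagonal of $V$ is the restriction of the diagonal of $U$. Hence $\mu_k$ assembles into a morphism of presheaves, and sheafifying yields $m$. The only mildly delicate point — and the step I expect to require the most care — is the interplay between excisions and restriction: the construction of $\mu_k$ passes through a chain of excision isomorphisms (enlarging supports by $\Sigma$ and then excising), and one must make sure these excisions are themselves natural under restriction to opens, so that the chain-level formula for $\mu_k(C)|_V$ agrees with $\mu_k(C|_V)$. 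This is routine but needs to be stated carefully; it follows because all the relevant pairs $(B,A) \subset X$ appearing in diagram \eqref{E: transfer} restrict compatibly to $V$ and the excised subsets are disjoint from $V$ in each case. Once this is in hand, the proposition follows.
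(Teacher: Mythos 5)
Your proposal is correct and follows essentially the same route as the paper, which simply observes that $\mu_k$ commutes with restriction to open subsets, hence gives a map of presheaves and therefore a map of sheaves. You have spelled out the well-definedness on each open $U$ (already covered by Proposition \ref{P: well-def prod} and Remark \ref{R: gen sup}) and the naturality of the excisions in more detail than the paper does, but there is no difference in the underlying argument.
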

\begin{proof}
We need only note that the intersection product $\mu_k:G_k^{ P,\infty}(U)\to S^{-n}I^{\bar r}C^{\infty}_*(U)$  behaves functorially under restriction. Thus, it induces a map of presheaves, which induces the map of sheaves. 
\end{proof}

\begin{lemma}
On $X-\Sigma$, the sheaf map $m$ of the preceding proposition is quasi-isomorphic to  the standard product map $\phi:\Z|_{X-\Sigma}\otimes\cdots\otimes \Z|_{X-\Sigma}\to \Z|_{X-\Sigma}$. 
\end{lemma}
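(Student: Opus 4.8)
The claim is that on the open manifold $X-\Sigma$, the sheaf map $m$ induced by the intersection product $\mu_k$ coincides, up to quasi-isomorphism, with the ordinary multiplication map $\phi$ on the constant sheaf $\Z|_{X-\Sigma}$. The first thing I would do is reduce to the level of presheaves on $U\subset X-\Sigma$: since $U$ is a PL manifold, there is no singular set to worry about, and the three dimension conditions in Definition \ref{D: GPC} collapse to the single requirement of ordinary general position with respect to $\Delta$. So on $X-\Sigma$ the sheaf $\mc G^P_{k,*}$ is the sheafification of $U\mapsto G_k(U)$ (no perversity restriction is active because $X_\kappa=\emptyset$ for $\kappa<n$), and $\mu_k=\Delta_!\circ\bve_k$ restricted to $U$ is precisely McClure's $k$-fold intersection product of PL chains on the manifold $U$.

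Next I would identify all the relevant sheaves with shifted versions of the constant sheaf. Over a manifold, $U\mapsto S^{-n}C^\infty_*(U)$ is a c-soft resolution of $\Z|_{X-\Sigma}$ (this is the standard fact that PL chains compute Borel--Moore homology, which over an $n$-manifold is $\Z$ in degree $-n$ after the shift; cf.\ the discussion in \cite{Bo}, and it is exactly how $\mc I^{\bar p}\mc C_*$ is set up in \cite{GM2} when $\bar p$ plays no role away from $\Sigma$). Hence $\mc I^{P}\mc C_*|_{X-\Sigma}$ is quasi-isomorphic to $\Z|_{X-\Sigma}^{\otimes k}\simeq \Z|_{X-\Sigma}$, and by Lemma \ref{L: qi}, $\mc G^P_{k,*}|_{X-\Sigma}$ is quasi-isomorphic to the same thing. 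The content of the lemma is then that the two maps $\mc G^P_{k,*}\to \mc I^{\bar r}\mc C_*$ given by $m$ and (via these identifications) by $\phi$ agree in the derived category of sheaves on $X-\Sigma$. Since a map out of a complex quasi-isomorphic to $\Z|_{X-\Sigma}$ into one quasi-isomorphic to $\Z|_{X-\Sigma}$ is determined up to homotopy by what it does on $H^0$ of stalks, it suffices to check the induced map on stalk homology at a point $x\in X-\Sigma$, i.e.\ on $H_*$ of small Euclidean balls.

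The heart of the argument is therefore a local computation in a Euclidean ball $B\subset U\cong \R^n$. Here I would invoke Corollary \ref{C: iteration}: when a chain in $G_k$ happens to be a single tensor product $\otimes_i S^{-n}D_i$ of chains in general position, $\mu_k$ is the iterated Goresky--MacPherson intersection product, which for $k$ copies of the fundamental class $\Gamma_B$ (generating $H_n^\infty(B)$, i.e.\ $H_0$ after the $S^{-n}$ shift) returns $\Gamma_B$ again — the geometric intersection of the total space with itself $k$ times. Since $\mc G^P_{k,*}$ and all the chain sheaves are quasi-isomorphic to $\Z|_{X-\Sigma}$ with $H^0$ of the stalk at $x$ generated by (the germ of) $\Gamma$, and since $\phi$ by definition sends $1\otimes\cdots\otimes 1\mapsto 1$, both $m$ and $\phi$ induce the identity $\Z\to\Z$ on stalk $H^0$. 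A map of complexes of sheaves over a space that is locally $\R^n$, inducing the identity on the single nonzero stalk cohomology group, is unique up to quasi-isomorphism; hence $m\simeq\phi$ on $X-\Sigma$. The main obstacle is bookkeeping rather than substance: I must be careful that the ``generator of the stalk'' used to identify $\mc G^P_{k,*}|_{X-\Sigma}$ with $\Z$ is literally (the image under $\bve_k$ of) $\Gamma\otimes\cdots\otimes\Gamma$, so that feeding it to $\mu_k$ is a case covered by Corollary \ref{C: iteration}, and that the degree shifts $S^{-n}$ are tracked consistently so the comparison lands in degree $0$ on both sides with matching signs — the sign $(-1)^{n(n-j)}$ appearing in Proposition \ref{P: GM} evaluates trivially on the fundamental class ($j=n$), which is exactly what makes $m$ and $\phi$ agree on the nose rather than merely up to a sign.
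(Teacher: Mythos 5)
Your proof is correct and follows essentially the same path as the paper's: both reduce to a local computation over a Euclidean ball, identify the stalk generator of $\mc G^P_{k,*}|_{X-\Sigma}$ with $S^{-n}\mc O\otimes\cdots\otimes S^{-n}\mc O$ (in general position by dimension count), and use the Goresky--MacPherson comparison (Corollary~\ref{C: iteration}) to conclude $\mu_k$ sends this to $S^{-n}\mc O$, matching $\phi$. The only difference is cosmetic: you phrase the final step via uniqueness of derived-category morphisms between complexes quasi-isomorphic to $\Z|_{X-\Sigma}$ detected on stalk $H^0$, whereas the paper simply notes compatibility with restriction of Euclidean balls.
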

\begin{proof}
We first observe that
$\mc G_{k,*}^P|_{X-\Sigma}\sim_{q.i.} \Z|_{X-\Sigma}\otimes\cdots\otimes \Z|_{X-\Sigma}$: 
\begin{align*}
\mc G_{k,*}^P|_{X-\Sigma}&\sim_{q.i.}\mc I^{P}\mc C_{*}|_{X-\Sigma}\\
&=  \mc I^{\bar p_1}\mc C_{*}|_{X-\Sigma}\otimes \cdots \otimes\mc I^{\bar p_k}\mc C_{*}|_{X-\Sigma}\\
&\sim_{q.i.}\Z|_{X-\Sigma}\otimes\cdots\otimes \Z|_{X-\Sigma}.
\end{align*}

Now, for each point $x\in X-\Sigma$, let $U\cong \R^n$ be a euclidean neighborhood. Then a generator of $\Z_U\otimes\cdots\otimes \Z_U$ corresponds to $S^{-n}\mc O\otimes \cdots \otimes S^{-n}\mc O$, where $\mc O$ is  the $n$-dimensional orientation cycle for $\R^n$ in $C^{\infty}_n(U)$. But $\bar \varepsilon_k(S^{-n}\mc O\otimes \cdots S^{-n} \mc O)$ is  automatically in stratified general position with respect to the diagonal $\Delta$ by dimension considerations; thus $S^{-n}\mc O\otimes \cdots\otimes S^{-n}\mc O$  is in $G_{k,0}^{P,\infty}$. Furthermore the image of $S^{-n}\mc O\otimes \cdots\otimes S^{-n}\mc O$ under $\mu_k$ is again $S^{-n}\mc O\in  S^{-n}C^{\infty}_n(U)$, which corresponds to a generator of $\Z_U\sim_{q.i.}S^{-n}\mc I^{\bar r}\mc C_*|U$. This can be seen by considering $\mc O\pf\cdots\pf \mc O$, which is classically equal to $\mc O$, and by using the results of the preceding Section. 

Since all of the above is compatible with restrictions and induces isomorphisms on the restrictions from $\R^n$ to $B^n$ (for any open ball $B^n$ in $\R^n$), and since it is this map of presheaves that induces the map of sheaves we are considering, the lemma follows.
\end{proof}

\begin{lemma}\label{L: flat}
$\mc I\mc C_*$ is a flat sheaf.
\end{lemma}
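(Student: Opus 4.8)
The plan is to check flatness stalkwise, using that a sheaf of $\Z$-modules on $X$ is flat if and only if each stalk is a flat, hence (since $\Z$ is a principal ideal domain) torsion-free, abelian group. Thus it suffices to show that for every degree $i$ and every $x\in X$ the stalk $(\mc I^{\bar p}\mc C_i)_x=\dlim_{U\ni x}S^{-n}I^{\bar p}C^{\infty}_i(U)$ is torsion-free, and likewise --- when $\mc I\mc C_*$ is read as a tensor product $\mc I^{P}\mc C_*$ of such sheaves --- for the tensor product, which is automatic once each factor is flat, since flatness is preserved by $\otimes$.

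First I would unravel the shift: the underlying abelian group of $S^{-n}I^{\bar p}C^{\infty}_i(U)$ is $I^{\bar p}C^{\infty}_{i+n}(U)$, which by the very definition of intersection chains is a \emph{subgroup} of the Borel--Moore PL chain group $C^{\infty}_{i+n}(U)=\dlim_{T}C^{T,\infty}_{i+n}(U)$. Each $C^{T,\infty}_{i+n}(U)$, being a group of locally finite integral formal sums of $(i+n)$-simplices of the triangulation $T$, sits inside the product $\prod_{\sigma}\Z$ and is therefore torsion-free: a nonzero integer multiple of a locally finite chain has the same nonempty support, so it is nonzero. Hence $C^{\infty}_{i+n}(U)$ is a filtered colimit of torsion-free groups, and so is torsion-free; its subgroup $I^{\bar p}C^{\infty}_{i+n}(U)$ is then torsion-free as well.

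Finally, torsion-freeness passes to filtered colimits, so the stalk $(\mc I^{\bar p}\mc C_i)_x=\dlim_{U\ni x}I^{\bar p}C^{\infty}_{i+n}(U)$ is torsion-free for every $i$ and $x$; thus each term $\mc I^{\bar p}\mc C_i$ is flat, i.e. $\mc I^{\bar p}\mc C_*$ is a complex of flat sheaves, and the same holds for any tensor product of such complexes. The identical argument shows that $\mc C_*$ and each $\mc G^{P}_{k,*}$ are flat, the latter because --- just as with allowability --- the stratified general position condition is unaffected by dividing a chain by a nonzero integer, so $G^{P,\infty}_{k,*}(U)$ is again a subgroup of a torsion-free group. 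I expect no genuine obstacle here; the only points requiring a little care are the degree-shift bookkeeping and the observation that Borel--Moore chain groups, although not free, are still torsion-free.
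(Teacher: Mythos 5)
Your argument is correct and uses essentially the same idea as the paper: the section groups $I^{\bar p}C^{\infty}_i(U)$ are torsion-free abelian groups, hence flat over $\Z$, and flatness passes through the direct limit to stalks and thus to the sheaf. The paper phrases it as flatness of the presheaf followed by a direct-limit argument rather than checking stalks directly, and leaves the torsion-freeness of $IC_i^{\infty}(U)$ as an observation, but these are only cosmetic differences from what you wrote.
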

\begin{proof}
For each $i$ and each open $U\subset X$, $IC_i^{\infty}(U)$ is torsion free. Thus tensor product of abelian groups with  $IC_i^{\infty}(U)$ is an exact functor, and thus $\mc I\mc C_*$ is flat as a presheaf. Taking direct limits shows that tensor product with $\mc I\mc C_*$ is exact as a functor of sheaves. So $\mc I\mc C_*$ is flat.
\end{proof}

We now limit ourselves to considering $G^P_2$ with various supports. 

Let $\mc P^{\bar p}_*$ be the perversity $\bar p$ Deligne sheaf (see \cite{GM2,Bo}), reindexed to be compatible with our current homological notation. 
According to \cite[Proposition V.9.14]{Bo}, there is in $D^b(X)$ a unique morphism $\Phi:\mc P^{\bar p}_*\overset{L}{\otimes} \mc P^{\bar q}_*\to \mc P^{\bar r}_*$ that extends the multiplication morphism $\phi:\Z_{X-\Sigma}\otimes \Z_{X-\Sigma}\to \Z_{X-\Sigma}$. Since $\mc I^{\bar p}\mc C_*$ is quasi-isomorphic to $\mc P^{\bar p}_*$ by \cite{GM2} and flat by Lemma \ref{L: flat}, the tensor complex $\mc I^{\bar p}\mc C_*\otimes \mc I^{\bar q}\mc C_*$ represents $\mc P^{\bar p}_*\overset{L}{\otimes} \mc P^{\bar q}_*$ in $D^b(X)$, and we can represent morphisms $\mc P^{\bar p}_*\overset{L}{\otimes} \mc P^{\bar q}_*\to \mc P^{\bar r}_*$ in $D^b(X)$ by roofs in the category of sheaf complexes
\begin{equation}\label{E: roof}\mc I^{\bar p}\mc C_*\otimes \mc I^{\bar q}\mc C_* \overset{s}{\leftarrow}\mc S_* \overset{f}{\to} \mc I^{\bar r}\mc C_*,
\end{equation}
where $f$ is a sheaf morphism and $s$ is a sheaf quasi-isomorphism. For the duality product morphism, we set  $\mc S_*$ equal to $\mc G^{P}_{2,*}$, and let $f$ be the sheaf map $m$ of Proposition \ref{P: product} and $s$ the quasi-isomorphism of Lemma \ref{L: qi}. We will show that the restriction of this roof to $X-\Sigma$ is equivalent to $\phi:\Z_{X-\Sigma}\otimes \Z_{X-\Sigma}\to \Z_{X-\Sigma}$ in $D^b(X-\Sigma)$.  

\begin{proposition}
Under the isomorphism $$\Mor_{D^b(X-\Sigma)}((\mc I^{\bar p}\mc C_*\otimes \mc I^{\bar q}\mc C_*)|_{X-\Sigma} ,  \mc I^{\bar r}\mc C_*|_{X-\Sigma})\cong \Mor_{D^b(X-\Sigma)}(\Z_{X-\Sigma}\otimes \Z_{X-\Sigma},\Z_{X-\Sigma}),$$ the restriction of the roof 
\begin{equation}\label{E: roof3}\mc I^{\bar p}\mc C_*\otimes \mc I^{\bar q}\mc C_* \overset{\sim_{q.i}}{\leftarrow}\mc G^P_{2,*} \overset{m}{\to} \mc I^{\bar r}\mc C^*,
\end{equation}
 to $X-\Sigma$ corresponds to the standard multiplication morphism $\phi:\Z_{X-\Sigma}\otimes \Z_{X-\Sigma}\to \Z_{X-\Sigma}$.
 \end{proposition}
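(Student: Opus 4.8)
The plan is to reduce the statement to a computation on $X-\Sigma$, where everything becomes the classical manifold intersection pairing and hence is already understood. First I would recall that over $X - \Sigma$ we have the chain of quasi-isomorphisms $\mc G^P_{2,*}|_{X-\Sigma} \sim_{q.i.} \mc I^P\mc C_*|_{X-\Sigma} = \mc I^{\bar p}\mc C_*|_{X-\Sigma} \otimes \mc I^{\bar q}\mc C_*|_{X-\Sigma} \sim_{q.i.} \Z_{X-\Sigma}\otimes \Z_{X-\Sigma}$, all established in the preceding lemmas; moreover, since $\Z_{X-\Sigma}$ is flat, this tensor product represents $\Z_{X-\Sigma}\overset{L}{\otimes}\Z_{X-\Sigma}$ in $D^b(X-\Sigma)$. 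Thus the roof \eqref{E: roof3} restricted to $X-\Sigma$ defines \emph{some} morphism in $\Mor_{D^b(X-\Sigma)}(\Z_{X-\Sigma}\otimes \Z_{X-\Sigma},\Z_{X-\Sigma})$, and the task is to identify it as the standard multiplication $\phi$. I would note that $\Mor_{D^b(X-\Sigma)}(\Z_{X-\Sigma}\otimes\Z_{X-\Sigma},\Z_{X-\Sigma})$ is, by adjunction/local calculation, just $H^0(X-\Sigma;\Z)$-worth of data locally, so it suffices to check the induced map on stalks (equivalently, on small euclidean neighborhoods) agrees with $\phi$.

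Next I would carry out the stalk computation, which is exactly the content of the lemma just proved showing $m$ is quasi-isomorphic to $\phi$ on $X-\Sigma$. Concretely, for $x\in X-\Sigma$ pick a euclidean neighborhood $U\cong \R^n$; the generator of $(\Z_U\otimes\Z_U)$ corresponds to $S^{-n}\mc O\otimes S^{-n}\mc O$ where $\mc O\in C^\infty_n(U)$ is the orientation cycle, this element automatically lies in $G^{P,\infty}_{2,0}(U)$ by dimension reasons (its support is all of $U$, so $\bar\varepsilon_2$ of it is in stratified general position with respect to $\Delta$ trivially), and $\mu_2$ sends it to $S^{-n}\mc O$, the generator of $\Z_U \sim_{q.i.} S^{-n}\mc I^{\bar r}\mc C_*|_U$. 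Here I would invoke Proposition \ref{P: GM} and Corollary \ref{C: iteration} to say that this is literally the Goresky-MacPherson product $\mc O\pf\mc O = \mc O$, which is precisely what $\phi$ does on generators. Since the quasi-isomorphism $s$ and the map $m$ are both induced by maps of presheaves compatible with restriction, and since on the further restriction from $\R^n$ to any ball $B^n$ they induce isomorphisms carrying generator to generator, the roof and $\phi$ agree as morphisms of (pre)sheaves up to the identifications above, hence as morphisms in $D^b(X-\Sigma)$.

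The one point requiring a little care — and the step I expect to be the main obstacle to state cleanly — is tracking the degree shifts and the normalization so that ``generator goes to generator'' really does pin down the morphism rather than pinning it down only up to a unit. I would handle this by working with the explicit presheaf model throughout: the isomorphism $\Mor_{D^b(X-\Sigma)}((\mc I^{\bar p}\mc C_*\otimes\mc I^{\bar q}\mc C_*)|_{X-\Sigma},\mc I^{\bar r}\mc C_*|_{X-\Sigma})\cong \Mor_{D^b(X-\Sigma)}(\Z_{X-\Sigma}\otimes\Z_{X-\Sigma},\Z_{X-\Sigma})$ is itself induced by the quasi-isomorphisms $\mc I^{\bar p}\mc C_*|_{X-\Sigma}\sim_{q.i.}\Z_{X-\Sigma}$ etc., so chasing the generator $S^{-n}\mc O\otimes S^{-n}\mc O$ through both sides simultaneously shows the two morphisms literally coincide on the distinguished local generators, with no ambiguous unit, because the shift conventions were set up (Section \ref{S: signs}, Section \ref{S: prelims}) precisely so that $\bar\varepsilon$ and the transfer are degree-zero chain maps sending orientation classes to orientation classes without sign. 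Everything else — functoriality under restriction, the comparison of $\mu_2$ with $\pf$ — is already available from earlier results, so the proof is essentially assembling these ingredients and verifying the single local computation.
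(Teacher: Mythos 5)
Your argument is correct and rests on the same essential computation as the paper's: the observation that $S^{-n}\mc O\otimes S^{-n}\mc O$ automatically lies in $\mc G^{P}_{2,*}$ over a manifold chart (general position is vacuous by dimension) and that $m$ carries it to $S^{-n}\mc O$. The packaging is genuinely different, though. The paper works globally in the roof calculus: it writes down explicit roof representatives $F$ and $F'$ for the comparison quasi-isomorphisms $\Z_{X-\Sigma}\to\mc I^{\bar r}\mc C_*|_{X-\Sigma}$ and $\Z_{X-\Sigma}\otimes\Z_{X-\Sigma}\to(\mc I^{\bar p}\mc C_*\otimes\mc I^{\bar q}\mc C_*)|_{X-\Sigma}$, composes these with $\phi$ by hand, and then shows the resulting roof is equivalent to the restriction of \eqref{E: roof3} by factoring $F'$ through $\mc G^P_{2,*}|_{X-\Sigma}$. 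You instead reduce to a local computation on stalks by first noting that a morphism $\Z_{X-\Sigma}\otimes\Z_{X-\Sigma}\to\Z_{X-\Sigma}$ in $D^b(X-\Sigma)$ lives in $H^0(X-\Sigma;\Z)$ and is therefore determined componentwise by what happens at a single point; this lets you invoke the preceding lemma's euclidean-chart computation directly. Your route is a bit quicker and avoids the "routine roof equivalence arguments," but it spends the extra information that the source and target are locally constant (so that $\Mor_{D^b}$ is stalk-detected); the paper's more explicit roof manipulation does not rely on this and would adapt to a setting where the restricted sheaves were not a priori identified with $\Z_{X-\Sigma}$. One small imprecision worth fixing: the earlier lemma only says the forward sheaf map $m$ is quasi-isomorphic to $\phi$; the content of the proposition is about the whole roof, including the backward leg $s$. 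You do handle this in your final paragraph (the backward leg hits the orientation generator because $\mc O\otimes\mc O$ is already in $\mc G^P_{2,*}$), but that step should be stated as the crux of the argument rather than left to the closing remarks, since without it "agrees with $\phi$" is not yet a statement about the derived category morphism.
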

\begin{proof}
$\phi$ is represented in $\Mor_{D(X-\Sigma)}(\Z_{X-\Sigma}\otimes \Z_{X-\Sigma},\Z_{X-\Sigma})$ by the roof
$$\Z_{X-\Sigma}\otimes \Z_{X-\Sigma}\overset{=}{\leftarrow} \Z_{X-\Sigma}\otimes \Z_{X-\Sigma}\overset{\phi}{\to} \Z_{X-\Sigma}.$$
To identify this with an element of 
\begin{equation}\label{E: roof2}
\Mor_{D^b(X-\Sigma)}((\mc I^{\bar p}\mc C_*\otimes \mc I^{\bar q}\mc C_*)|_{X-\Sigma} ,  \mc I^{\bar r}\mc C_*|_{X-\Sigma}),
\end{equation}
 which is isomorphic to $\Mor_{D^b(X-\Sigma)}(\Z_{X-\Sigma}\otimes \Z_{X-\Sigma},\Z_{X-\Sigma})$  due to the quasi-isomorphisms of the sheaves involved, we must pre- and post-compose in $D^b(X-\Sigma)$ with the appropriate $D^b(X-\Sigma)$ isomorphisms. These can be represented as roofs
$$ (\mc I^{\bar p}\mc C_*\otimes \mc I^{\bar q}\mc C_*)|_{X-\Sigma} \overset{F'}{\leftarrow}\Z_{X-\Sigma}\otimes \Z_{X-\Sigma} \overset{=}{\to}\Z_{X-\Sigma}\otimes \Z_{X-\Sigma}$$ and 
$$ \Z_{X-\Sigma}\overset{=}{\leftarrow}\Z_{X-\Sigma} \overset{F}{\to}(\mc I^{\bar r}\mc C_*)|_{X-\Sigma}.$$ 
The map $F$ is induced by taking $z\in \Gamma(X-\Sigma;\Z_{X-\Sigma})\cong \Z$ to $z$ times the orientation class $\mc O$, and $F'$ takes $y\otimes z$ to $yz$ times the image of $\mc O\times \mc O$ in the sheaf $(\mc I^{\bar p}\mc C_*\otimes \mc I^{\bar q}\mc C_*)|_{X-\Sigma}$. 

Some routine roof equivalence arguments  yield that $\phi$, together with the pre- and post-compositions of isomorphisms, is equivalent to the roof
$$ (\mc I^{\bar p}\mc C_*\otimes  \mc I^{\bar q}\mc C_*)|_{X-\Sigma} \overset{F'}{\leftarrow}\Z_{X-\Sigma}\otimes \Z_{X-\Sigma} \overset{H}{\to} (\mc I^{\bar r}\mc C_*)|_{X-\Sigma},$$ 
where $H$ is the composition of $\phi$ and $F$. 

To see that this last roof is equivalent to the restriction of \eqref{E: roof3}  to $X-\Sigma$, we need only note that $F'$ factors through $\mc G_{k,*}^P|_{X-\Sigma}$, since $\mc O$ is in general position with respect to itself, and that the  composition $F'': \Z_{X-\Sigma}\otimes \Z_{X-\Sigma}\to \mc G_{k,*}^P|_{X-\Sigma}\to (\mc I^{\bar r}\mc C_*)|_{X-\Sigma}$ is precisely the same multiple of the orientation class that we get from $F\circ \phi$. 

Thus we have demonstrated the proposition.
 \end{proof}

 \begin{corollary}\label{C: ext}
 The morphism in $\Mor_{D^b(X)}(\mc I^{\bar p}\mc C_*\otimes \mc I^{\bar q}\mc C_* ,  \mc I^{\bar r}\mc C_*)$ represented by the
  roof \eqref{E: roof3} must be the unique extension from $\Mor_{D^b(X-\Sigma)}((\mc I^{\bar p}\mc C_*\otimes \mc I^{\bar q}\mc C_*)|_{X-\Sigma} ,  \mc I^{\bar r}\mc C^*|_{X-\Sigma})$ of the image of the multiplication $\phi$ under the isomorphism
 $ \Mor_{D^b(X-\Sigma)}(\Z_{X-\Sigma}\otimes \Z_{X-\Sigma},\Z_{X-\Sigma})      \to \Mor_{D^b(X-\Sigma)}((\mc I^{\bar p}\mc C_*\otimes \mc I^{\bar q}\mc C_*)|_{X-\Sigma} ,  \mc I^{\bar r}\mc C_*|_{X-\Sigma})$.
\end{corollary}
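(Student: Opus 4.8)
The plan is to invoke the uniqueness half of the Deligne sheaf multiplication theorem, \cite[Proposition V.9.14]{Bo}, which asserts that for the perversity parameters with $\bar p + \bar q \leq \bar r$ there is a \emph{unique} morphism in $\Mor_{D^b(X)}(\mc P^{\bar p}_*\overset{L}{\otimes} \mc P^{\bar q}_*, \mc P^{\bar r}_*)$ restricting on $X-\Sigma$ to the standard multiplication $\phi:\Z_{X-\Sigma}\otimes \Z_{X-\Sigma}\to \Z_{X-\Sigma}$. Since $\mc I^{\bar p}\mc C_*\sim_{q.i.}\mc P^{\bar p}_*$ (by \cite{GM2}) and $\mc I^{\bar p}\mc C_*$ is flat (Lemma \ref{L: flat}), the tensor product $\mc I^{\bar p}\mc C_*\otimes \mc I^{\bar q}\mc C_*$ is a legitimate representative of the derived tensor product $\mc P^{\bar p}_*\overset{L}{\otimes} \mc P^{\bar q}_*$, so the $D^b(X)$-Hom sets are canonically identified and the uniqueness statement transports verbatim.

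With that in hand, the argument is short. First I would observe that the roof \eqref{E: roof3}, namely $\mc I^{\bar p}\mc C_*\otimes \mc I^{\bar q}\mc C_* \overset{\sim_{q.i}}{\leftarrow}\mc G^P_{2,*} \overset{m}{\to} \mc I^{\bar r}\mc C_*$, does define an element of $\Mor_{D^b(X)}(\mc I^{\bar p}\mc C_*\otimes \mc I^{\bar q}\mc C_*, \mc I^{\bar r}\mc C_*)$: the left leg is a quasi-isomorphism of sheaves by Lemma \ref{L: qi} and the right leg is a genuine sheaf morphism by Proposition \ref{P: product}, so it is an admissible roof. Second, I would restrict this roof to $X-\Sigma$ and invoke the preceding Proposition, which identifies that restriction, under the isomorphism of $D^b(X-\Sigma)$-Hom sets induced by the quasi-isomorphisms of the sheaves involved, with the standard multiplication morphism $\phi$. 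Third, since restriction $D^b(X)\to D^b(X-\Sigma)$ is a functor, the element of $\Mor_{D^b(X)}$ represented by \eqref{E: roof3} is a morphism which restricts on $X-\Sigma$ to $\phi$; by the uniqueness clause of \cite[Proposition V.9.14]{Bo} (transported across the quasi-isomorphisms as above) it must therefore coincide with the unique such extension, which is precisely the image of $\phi$ under the isomorphism $\Mor_{D^b(X-\Sigma)}(\Z_{X-\Sigma}\otimes \Z_{X-\Sigma},\Z_{X-\Sigma}) \to \Mor_{D^b(X-\Sigma)}((\mc I^{\bar p}\mc C_*\otimes \mc I^{\bar q}\mc C_*)|_{X-\Sigma}, \mc I^{\bar r}\mc C_*|_{X-\Sigma})$. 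This is exactly the assertion of the corollary.

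The only point requiring a modicum of care — and the step I expect to be the main obstacle — is checking that the hypotheses of the uniqueness theorem of \cite{Bo} are genuinely met by the objects at hand, i.e. that $\mc I^{\bar r}\mc C_*$ (equivalently $\mc P^{\bar r}_*$) satisfies the support and cosupport conditions that make the extension of $\phi$ from $X-\Sigma$ to $X$ unique, and that these conditions hold for the reindexed (shifted) sheaves we are using here. This is essentially a bookkeeping matter: one must match the homological indexing convention of the present paper with the cohomological convention of \cite{Bo}, track the $S^{-n}$ shifts built into all the sheaves, and confirm that the attaching/truncation conditions defining the Deligne sheaf are preserved under this reindexing. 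Once the dictionary is set up, uniqueness is immediate from \cite[Section V.9]{Bo} and no further geometric input is needed — all the geometry has already been done in constructing the roof and computing its restriction to $X-\Sigma$.
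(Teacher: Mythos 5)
Your proposal matches the paper's proof in both substance and strategy: the paper likewise cites the preceding proposition to identify the restriction to $X-\Sigma$ with $\phi$ and then appeals to the uniqueness clause of \cite[Proposition V.9.14]{Bo}. Your version simply spells out the intermediate steps (admissibility of the roof, flatness giving a model for the derived tensor, functoriality of restriction, and the indexing bookkeeping), which the paper leaves implicit.
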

\begin{proof}
From the proposition, the roof \eqref{E: roof3} restricts to a morphism corresponding to $\phi$ on $X-\Sigma$. The uniqueness follows as in \cite[Proposition V.9.14]{Bo}.
\end{proof}

Finally, we can show that the geometric intersection pairing is isomorphic to the sheaf-theoretic pairing.

\begin{theorem}
If $\bar p+\bar q\leq \bar r$, then the pairings
\begin{align*}
I^{\bar p}H_i^{\infty}(X)\otimes I^{\bar q}H_{j}^{\infty}(X)&\to I^{\bar r}H_{i+j-n}^{\infty}(X)\\
I^{\bar p}H_i^c(X)\otimes I^{\bar q}H_{j}^{c}(X)&\to I^{\bar r}H_{i+j-n}^c(X)\\
I^{\bar p}H_i^c(X)\otimes I^{\bar q}H_{j}^{\infty}(X)&\to I^{\bar r}H_{i+j-n}^c(X)
\end{align*} 
determined by sheaf theory are isomorphic to the respective pairings determined by geometric intersection.
\end{theorem}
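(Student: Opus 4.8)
The plan is to reduce the theorem to the identification, already essentially established in the preceding results, of the geometric pairing $\pf$ of Goresky--MacPherson with the sheaf-theoretic pairing of \cite{GM2}, both viewed on hyperhomology. First I would recall that by \cite{GM1} the geometric pairing on homology is obtained by representing intersection homology classes by cycles in stratified general position (possible by \cite{Mc78}, or by the general position results of Section \ref{S: GP} here) and intersecting them; by Corollary \ref{C: iteration} and Proposition \ref{P: GM}, for $k=2$ this geometric pairing is exactly the one induced by $\mu_2$ on $G_2^P$. On the other hand, by Lemma \ref{L: qi} the sheaf $\mc G^P_{2,*}$ is quasi-isomorphic to $\mc I^{\bar p}\mc C_*\otimes \mc I^{\bar q}\mc C_*$, and by Proposition \ref{P: product} the product $\mu_2$ induces the sheaf map $m:\mc G^P_{2,*}\to\mc I^{\bar r}\mc C_*$; so passing to hyperhomology and using the isomorphisms $\H^{\Phi}_*(X;\mc G^P_{2,*})\cong\H^{\Phi}_*(X;\mc I^P\mc C_*)$ from the Corollary after Lemma \ref{L: qi}, the map induced on hyperhomology by the roof \eqref{E: roof3} agrees with the map induced by $\mu_2$, hence with the geometric pairing.

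The remaining point is that the morphism in $D^b(X)$ represented by \eqref{E: roof3} is the \emph{same} morphism $\Phi:\mc P^{\bar p}_*\overset{L}{\otimes}\mc P^{\bar q}_*\to\mc P^{\bar r}_*$ that \cite{GM2} (equivalently \cite[V.9]{Bo}) uses to define the sheaf-theoretic pairing. This is precisely Corollary \ref{C: ext}: both morphisms restrict on $X-\Sigma$ to the standard multiplication $\phi:\Z_{X-\Sigma}\otimes\Z_{X-\Sigma}\to\Z_{X-\Sigma}$ (for \eqref{E: roof3} this is the preceding Proposition; for the Deligne construction it is \cite[Proposition V.9.14]{Bo}), and such an extension across $\Sigma$ is unique. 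Hence the two morphisms in $\Mor_{D^b(X)}(\mc I^{\bar p}\mc C_*\overset{L}{\otimes}\mc I^{\bar q}\mc C_*,\mc I^{\bar r}\mc C_*)$ coincide, so they induce the same map on hyperhomology, namely on $I^{\bar p}H^{\Phi}_*(X)\otimes I^{\bar q}H^{\Phi}_*(X)\to I^{\bar r}H^{\Phi}_*(X)$.

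I would then treat the three cases of the statement uniformly by choosing the family of supports $\Phi$. For the first pairing take $\Phi$ to be all closed subsets, so that $\H^{\Phi}_*(X;-)$ computes Borel--Moore intersection homology $I^{\bar p}H^{\infty}_*(X)$; for the second take $\Phi$ to be the compact supports, computing $I^{\bar p}H^c_*(X)$; the third, ``mixed,'' case is handled with the mixed-type domain $G_2$ noted in Remark \ref{R: gen sup} and in Section \ref{S: sheaves}, together with compact supports on the output, exactly as the last remark in Section \ref{S: multiproduct} indicates (using that $\Delta$ is proper, so the umkehr map is defined on locally-finite chains and the restriction-to-$U$ maps are well behaved). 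In each case the chain-level product $\mu_2$ restricts to the geometric Goresky--MacPherson intersection by Proposition \ref{P: GM}, and sheafifies to $m$, so the same argument applies verbatim.

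The main obstacle, and the one I would write out most carefully, is the matching of hyperhomology degree conventions and the shifts: the sheaves here carry the shift $S^{-n}$ built in, and $I^{\bar p}H_i$, $I^{\bar q}H_j$, $I^{\bar r}H_{i+j-n}$ must be lined up with the hyperhomology of $\mc I^{\bar p}\mc C_*$, $\mc I^{\bar q}\mc C_*$, $\mc I^{\bar r}\mc C_*$ in the degrees dictated by the K\"unneth/tensor map $\mc I^{\bar p}\mc C_*\otimes\mc I^{\bar q}\mc C_*\to\mc I^{\bar r}\mc C_*$, so that the degree-$0$ chain map $\mu_2$ on the shifted complexes corresponds to the expected degree $i+j-n$ pairing on the unshifted intersection homology groups. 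This is bookkeeping rather than a genuine difficulty, since the shift conventions were set up in Sections \ref{S: prelims} and \ref{S: signs} precisely to make all the relevant maps degree $0$; once it is checked, the identification of the two pairings follows from Corollary \ref{C: ext} and the results of Sections \ref{S: pairing} and \ref{S: sheaves} as above.
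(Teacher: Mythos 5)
Your plan follows the paper's proof in its essentials: represent the Deligne morphism by the roof $\mc I^{\bar p}\mc C_*\otimes\mc I^{\bar q}\mc C_*\leftarrow\mc G^P_{2,*}\overset{m}{\to}\mc I^{\bar r}\mc C_*$, use Corollary \ref{C: ext} to identify it with the unique extension of $\phi$, and compare with the geometric pairing via $\mu_2$, Proposition \ref{P: GM}, and Lemma \ref{L: qi}/Proposition \ref{P: product}. You have the right lemmas and the right overall architecture. Two places, however, are treated too lightly and one is misdiagnosed.

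First, the sentence ``passing to hyperhomology \ldots the map induced on hyperhomology by the roof agrees with the map induced by $\mu_2$, hence with the geometric pairing'' is an assertion, not a step. The geometric pairing is computed on the homology of the presheaf-section complexes $S^{-n}I^{\bar p}C^\Phi_*(X)\otimes S^{-n}I^{\bar q}C^\Psi_*(X)$, $G^{P,\Phi}_{2,*}$, $S^{-n}I^{\bar r}C^\Phi_*(X)$, while the sheaf-theoretic pairing is computed on hyperhomology of the corresponding sheaves. To pass between them you must exhibit a commuting ladder in which the top row is the presheaf-level picture, the bottom row is the hyperhomology picture, and the verticals are sheafification followed by the natural map to sections of an injective resolution. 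Commutativity is where naturality of sheafification and of injective resolution earn their keep. This is precisely the paper's diagram \eqref{E: diagram}, and it is what actually identifies the two pairings; without it, the comparison does not close up.

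Second, for the mixed case you say the same argument ``applies verbatim,'' but it does not. One must restrict to the subcomplex $\hat G^P_{2,*}(X)\subset G^{P,\infty}_{2,*}(X)$ of sums $\sum S^{-n}\xi_i\otimes S^{-n}\eta_i$ with each $\xi_i$ compactly supported, and then check the additional (non-automatic) fact that the image of such an element in the sheaf $\mc G^P_{2,*}$ has compact support as a section: on the complement of the compact set $\bigcup|\xi_i|$ each term restricts to $0\otimes\eta_i|_U=0$. This is what makes the vertical maps of the ladder land in $\H^c_*$; Remark \ref{R: gen sup} gives the chain-level quasi-isomorphism but not this compact-support-of-sections observation. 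Finally, the shift/degree bookkeeping you single out as ``the main obstacle'' is not one: the $S^{-n}$ conventions were arranged precisely so that $\mu_2$ and $m$ are degree-$0$ maps and the hyperhomology gradings line up without friction. The genuine work is in the naturality ladder and the compact-support observation.
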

\begin{proof}
From \cite[Section V.9]{Bo}, the sheaf theoretic pairing is induced by the unique extension of  the morphism $\phi:\Z_{X-\Sigma}\otimes \Z_{X-\Sigma}\to \Z_{X-\Sigma}$
in $\Mor_{D^b(X-\Sigma)}(\Z_{X-\Sigma}\otimes \Z_{X-\Sigma},\Z_{X-\Sigma})$ to $\Mor_{D^b(X)}(\mc I^{\bar p}\mc C_*\overset{L}{\otimes}\mc I^{\bar q}\mc C_*, \mc I^{\bar r}\mc C_*)$. Given this unique extension, which we shall denote $\pi$, the intersection homology pairings can be described as follows. Since the intersection chain sheaves are soft (see \cite[Chapter II]{Bo}), a generating element $s\otimes t\in IH_i^{\Phi}(X)\otimes IH_{j}^{\Psi}(X)$ (where $\Phi$ and $\Psi$ represent $c$ or $\infty$) is represented by sections $s\in \Gamma_\Phi(X;\mc I^{\bar p}\mc C_{i-n})$ and $t\in \Gamma_\Psi(X;\mc I^{\bar q}\mc C_{j-n})$ such that $\bd s=\bd t=0$ as sections. 
Since $(\mc I^{\bar p}\mc C_{i-n}\otimes \mc I^{\bar q}\mc C_{j-n})_x\cong (\mc I^{\bar p}\mc C_{i-n})_x\otimes (\mc I^{\bar p}\mc C_{j-n})_x$, $s\otimes t$ determines  a  section of  $\Gamma(X;\mc I^{\bar p}\mc C_{i-n}\otimes \mc I^{\bar q}\mc C_{j-n})$, which is isomorphic to $\Gamma(X;\mc I^{\bar p}\mc C_{i-n}\overset{L}{\otimes} \mc I^{\bar q}\mc C_{j-n})$ by Lemma \ref{L: flat}. If  either $s$ or $t$ has compact support, so does $s\otimes t$. This section then maps to a cycle in any injective resolution of $\mc I^{\bar p}\mc C_{*}\overset{L}{\otimes}\mc I^{\bar q}\mc C_{*}$  and thus represents an element $z$ in the hyperhomology $\H_{i+j-2n}(X;\mc I^{\bar p}\mc C_{*}\overset{L}{\otimes} \mc I^{\bar q}\mc C_{*})$. If $s\otimes t$ has compact support, $z$ also represents an element of $\H_{i+j-2n}^c(X;\mc I^{\bar p}\mc C_{*}\overset{L}{\otimes} \mc I^{\bar q}\mc C_{*})$.

Now, due to Corollary \ref{C: ext}, the morphism $\pi$ is represented by the roof
$$\mc I^{\bar p}\mc C_*\otimes \mc I^{\bar q}\mc C_* \overset{q.i.}{\leftarrow}\mc G_{2,*}^P \overset{m}{\to} \mc I^{\bar r}\mc C_*,$$
which induces hyperhomology morphisms
$$\H^\Phi_*(X;\mc I^{\bar p}\mc C_*\otimes \mc I^{\bar q}\mc C_*) \overset{\cong}{\leftarrow}\H^\Phi_*(X;\mc G_{2,*}^P) \to\H^\Phi_*(X; \mc I^{\bar r}\mc C_*).$$
Making the desired choices of supports and applying to $z$ the composition of the inverse of the lefthand isomorphism and the righthand morphism gives the pairings as defined via sheaf theory.

 Now, consider the following diagram. For the moment, we take $\Phi=\Psi$, which can be either $c$ or $\infty$.

\begin{diagram}[LaTeXeqno]\label{E: diagram}
H_*(S^{-n}I^{\bar p}C^\Phi_*(X)\otimes S^{-n}I^{\bar q}C^{\Psi}_*(X))& \lTo^{\cong}&H_*(G^{P,\Phi}_{2,*})&\rTo&H_*(S^{-n}I^{\bar r}C^\Phi_*(X))\\
\dTo&&\dTo&&\dTo^{\cong}\\
\H^\Phi_*(X;\mc I^{\bar p}\mc C_*\otimes \mc I^{\bar q}\mc C_*)& \lTo^{\cong}&\H^\Phi_*(X;\mc G_{2,*}^P) &\rTo&\H^\Phi_*(X; \mc I^{\bar r}\mc C_*).
\end{diagram}
The groups on the top row are simply the homology groups of the sections of presheaves with supports in $\Phi$. The vertical homology maps are induced by taking presheaf sections to sheaf sections to sections of injective resolutions. Since sheafification and injective resolution are natural functors, the diagram commutes. Applied to the tensor product of two chains in stratified general position,  the composition of the lefthand vertical map with the maps of the bottom row is exactly the sheaf theoretic pairing as described above. 
Meanwhile, the composition of maps along the top row is  the geometric pairing $\mu_2$ defined above using the domain $G_2^{P,\infty}$. 
The theorem now follows in this case from the commutativity of the diagram and the results of the previous sections, in which we demonstrated that, for a pair of chains in stratified general position,  $\mu_2$ agrees with the Goresky-MacPherson product. 

When $\Phi=c$ and $\Psi=\infty$, we must be a bit more careful. Here we replace $H_*(G^{P,\Phi}_{2,*})$ with the homology of the subcomplex $\hat G^P_{2,*}(X)\subset G^{P,\infty}_{2,*}(X)$ defined as follows. Recall that $G^{P,\infty}_{2,*}(X)$ is a subcomplex of $S^{-n}I^{\bar p}C^{\infty}_*(X)\otimes S^{-n}I^{\bar q}C_*^{\infty}(X)$. Thus any element $e\in G^P_{2,*}(X)$ can be written as a finite sum $e=\sum S^{-n}\xi_i\otimes S^{-n} \eta_i$, where $\xi_i\in I^{\bar p}C^{\infty}_*(X)$ and $\eta_i\in I^{\bar q}C_*^{\infty}(X)$. We let $\hat G^P_{2,*}(X)$ consist of such sums for which each $\xi_i$ has compact support. This is clearly a subcomplex, and the  general position proof of Section \ref{S: GP} shows that  $\hat G^P_{2,*}(X)$ is quasi-isomorphic to $S^{-n}I^{\bar p}C^{c}_*(X)\otimes S^{-n}I^{\bar q}C_*^{\infty}(X)$.  We also observe that the image of each such element of $\hat G^P_{2,*}(X)$ in the sheaf $\mc G^P_{2,*}(X)$ has compact support. Indeed, if $x\notin \cup |\xi_i|$, which is compact, then the restriction of $e$ to a neighborhood $U$ of $x$ must have the form $\sum S^{-n}0\otimes S^{-n} \eta_i|_U=0$. 

Now we can take diagram \eqref{E: diagram} with $\Phi=c$, $\Psi=\infty$ and with $G^{P,\infty}_{2,*}(X)$ replaced by $\hat G_{2,*}^{P}(X)$ in the middle of the top row. The diagram continues to commute, and the correspondence between the geometric and sheaf-theoretic pairings follows as for the preceding cases.
\end{proof}

As a result of the theorem, several common practices become easily justified. For example, we can demonstrate that the sheaf theoretic product has a symmetric middle-dimensional pairing for  oriented Witt spaces of dimension $0\mod 4$ and an anti-symmetric middle-dimensional pairing for  oriented Witt spaces of dimension $2 \mod 4$. To see this, we note that, if $C\in I^{\bar p}C_i(X)$ and $D\in I^{\bar q}C_j(X)$ with $\bar p+\bar q\leq \bar r$ for some $\bar r$ and $C$ and $D$ in stratified general position, then 

\begin{align*}
S^n\mu_2(S^{-n} C,S^{-n}D)&=C\pf D\\
&=(-1)^{(n-i)(n-j)}D\pf C\\
&=(-1)^{(n-i)(n-j)}S^n\mu_2(S^{-n}D,S^{-n}C).
\end{align*}
The second equality here uses the well-known graded symmetry of geometric intersection products.
So, in particular, if $X$ is a Witt space and $\bar p=\bar q=\bar m$, the lower middle perversity, and if $n=4w$ and $i=j=2w$, then the product is symmetric. Similarly, if $n=2w\equiv 2\mod 4$ and $i=j=w$, then the pairing is anti-symmetric.

Of course this is well-known for geometric intersection products, but it is not completely obvious from Verdier duality (see, e.g., \cite[Appendix]{Ba02}).

\section{Appendix A - Sign issues}

In this appendix we collect some technical lemmas, especially those that correct the sign issues in the original version of \cite{McC}. We refer the reader to the main text above for some of the definitions and also to the revised version of \cite{McC}. The sign corrections  necessary to perform these computations are due to McClure.

Recall that for complexes $A^i_*$, we define $\Theta: S^{m_1}A^1_*\otimes \cdots \otimes S^{m_k}A^k_* \to S^{\sum m_i} (A_1^* \otimes \cdots \otimes A^k_*)$ by $$\Theta(S^{m_1}x_1\otimes \cdots \otimes S^{m_k}x_k)=(-1)^{\sum_{i=2}^k (m_i\sum_{j<i}|x_j|)} S^{\sum m_i}(x_1\otimes \cdots \times x_k).$$

\begin{lemma}\label{L: Theta}
$\Theta: S^{m_1}A^1_*\otimes \cdots \otimes S^{m_k}A^k_* \to S^{\sum m_i} (A_1^* \otimes \cdots \otimes A^k_*)$ is a chain isomorphism.
\end{lemma}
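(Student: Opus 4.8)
The claim is that $\Theta$, defined on generators by the sign $(-1)^{\sum_{i=2}^k(m_i\sum_{j<i}|x_j|)}$, is a chain isomorphism. I would break this into two essentially independent tasks: first, checking that $\Theta$ is a chain map (i.e. commutes with the differentials up to the sign appropriate to its degree, which here is degree $0$, so strict commutativity), and second, checking that it is an isomorphism of graded modules. The second task is the easy one: $\Theta$ is the identity up to multiplication by a unit $\pm 1$ on each generator, and these signs depend only on the multidegree of the generator, so the obvious formula with the same sign gives a two-sided inverse. Hence $\Theta$ is bijective and the only real content is the chain-map property.

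For the chain-map verification, the plan is to reduce to the case $k=2$ and then iterate. For $k=2$ this is exactly the computation already displayed in the Remark following the definition of $C_*^{R^*}(X(k))$ in Section~\ref{S: prelims}: one expands $\bd_{S^{m_1+m_2}(A^1_*\otimes A^2_*)}\Theta(S^{m_1}x_1\otimes S^{m_2}x_2)$ using $\bd_{S^m} = (-1)^m\bd$ and the Leibniz rule, expands $\Theta(\bd_{S^{m_1}A^1_*\otimes S^{m_2}A^2_*}(S^{m_1}x_1\otimes S^{m_2}x_2))$ the same way, and observes that the discrepancy between the Koszul sign $(-1)^{m_1}$ picked up by passing $\bd$ across the shift on the left factor (in the target) versus $(-1)^{m_2|x_1|}$ and $(-1)^{m_1}$-type factors on the source side is exactly absorbed by the definition of the $\Theta$-sign; this is the content of the sentence ``The appropriate isomorphism must take $S^{-m-n}(c\otimes d)$ to $(-1)^{n\deg(c)}S^{-m}c\otimes S^{-n}d$.'' For general $k$, I would write $\Theta$ as a composite of such two-factor maps, e.g. $\Theta = \Theta_{(1\cdots k-1),k}\circ(\Theta_{(1\cdots k-2),k-1}\otimes \mathrm{id})\circ\cdots$, grouping off one tensor factor at a time; since a composite of chain isomorphisms is a chain isomorphism, it suffices to check that the total sign accumulated by this composite matches $\sum_{i=2}^k m_i\sum_{j<i}|x_j|$. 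That sign bookkeeping is a short induction on $k$: the last application contributes $m_k\sum_{j<k}|x_j|$ and the inductive hypothesis supplies the rest.

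The step I expect to be the main (though modest) obstacle is precisely the sign bookkeeping in the inductive passage from $k=2$ to general $k$ — making sure that when one groups the first $k-1$ factors and then tensors with the last, the shifts $S^{m_1+\cdots+m_{k-1}}$ versus $S^{m_1},\ldots,S^{m_{k-1}}$ do not introduce extra Koszul signs beyond those already recorded, and that the formula $e_2$-type quadratic expression does not appear here (it appears only later, in the definition of $\bar\varepsilon$). I would double-check this against the $k=2$ case already worked out in the text and against associativity of $\Theta$ (the composite should be independent of how one parenthesizes the grouping), which is itself a useful consistency check. Apart from that, everything is routine application of the Koszul sign rule and the conventions $\bd_{S^mC_*} = (-1)^m\bd_{C_*}$ fixed in Section~\ref{S: prelims}.
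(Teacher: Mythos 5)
Your approach is correct, and it differs from the paper's in a genuinely useful way. The paper verifies the chain-map property for arbitrary $k$ in a single computation: it expands $\bd\Theta(S^{m_1}x_1\otimes\cdots\otimes S^{m_k}x_k)$ and $\Theta\bd(S^{m_1}x_1\otimes\cdots\otimes S^{m_k}x_k)$ as sums over $l$ (the factor where $\bd$ lands), pulls out all Koszul and shift signs on both sides, and then observes the resulting exponents agree mod $2$ — a correct but somewhat opaque bookkeeping exercise. You instead take the $k=2$ case as the base (which is what the Remark in Section~\ref{S: prelims} is exhibiting, modulo the observation that the boundary formulas there amount to the chain-map check), and build $\Theta_k$ as the composite $\Theta_2\circ(\Theta_{k-1}\otimes\mathrm{id})$, where the key point — as you flag — is that $\Theta_{k-1}$ and $\mathrm{id}$ both have degree $0$, so $\Theta_{k-1}\otimes\mathrm{id}$ picks up no extra Koszul sign, and the sign of the composite is $(-1)^{\sum_{i=2}^{k-1}m_i\sum_{j<i}|x_j|}\cdot(-1)^{m_k\sum_{j<k}|x_j|} = (-1)^{\sum_{i=2}^{k}m_i\sum_{j<i}|x_j|}$, exactly $\Theta_k$'s sign. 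This inductive factorization makes the sign bookkeeping local and transparent, and as you note it gives associativity of $\Theta$ for free as a consistency check; the price is that you must separately note the standard facts that a tensor product of degree-$0$ chain maps is a chain map and that a composite of chain isomorphisms is a chain isomorphism. Both routes are valid; yours is more structural, the paper's is more direct.
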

\begin{proof}
We compute 
\begin{align*}
\bd \Theta(S^{m_1}x_1\otimes \cdots \otimes S^{m_k}x_k)&=\bd(-1)^{\sum_{i=2}^k (m_i\sum_{j<i}|x_j|)} S^{\sum m_i}(x_1\otimes \cdots \times x_k)\\
&=\sum_l (-1)^{\sum_{i=2}^k (m_i\sum_{j<i}|x_j|)+\sum m_i} S^{\sum m_i} x_1\otimes \cdots \otimes (-1)^{\sum_{a<l}|x_a|}\bd x_l \otimes \cdots\otimes x_k\\
&=\sum_l (-1)^{\sum_{i=2}^k (m_i\sum_{j<i}|x_j|)+\sum m_i+\sum_{a<l}|x_a|} S^{\sum m_i} x_1\otimes \cdots \otimes \bd x_l \otimes \cdots\otimes x_k,
\end{align*}
while 

\begin{align*}
\Theta \bd &(S^{m_1}x_1\otimes \cdots \otimes S^{m_k}x_k)= 
\Theta(\sum_l    S^{m_1}x_1\otimes \cdots \otimes (-1)^{\sum_{a<l}|x_a|+\sum_{b\leq l}m_b}  S^{m_l}\bd x_l\otimes \cdots\otimes S^{m_k}x_k)\\
&= \sum_l (-1)^{\sum_{a<l}|x_a|+\sum_{b\leq l}m_b}
(-1)^{ \sum_{r\leq l}(m_r(\sum_{j<r}|x_j|))+\sum_{s>l}(m_s(-1+\sum_{j<s} |x_j|))}    S^{\sum m_i} x_1\otimes \cdots \otimes \bd x_l\otimes \cdots \otimes x_k
\end{align*}

It is not difficult to compare the two signs and see that they agree. Therefore $\Theta$ is a chain map. It is clearly an isomorphism.

\end{proof}

Recall from Section \ref{S: GP}   that $\bar \varepsilon: S^{-m_1}C_*(M_1)\otimes \cdots \otimes S^{-m_k}C_*(M_k) \to S^{-\sum m_i}C_*(M_1\times \cdots \times M_k)$ is defined to be $(-1)^{e_2(m_1,\ldots ,m_k)}$ times the composition of $\Theta$ with the $S^{-\sum m_i}$ shift of McClure's chain product $\varepsilon$.

\begin{lemma}\label{L: eps dual}
$\bar \varepsilon_k$ is dual to the iterated cochain cross product under the (signed) Poincar\'e duality morphism. In other words, letting $P_{X_i}$ be the Poincar\'e duality map on the oriented $m_i$-pseudomanifold $X_i$, given by the appropriately signed cap product with the fundamental class $\Gamma_{X_i}$ and shifted to be a degree $0$ chain map, there is a commutative diagram
\begin{diagram}
C^{-*}(X_1)\otimes\cdots \otimes  C^{-*}(X_k)&\rTo^{\times\cdots \times  }&C^{-*}(X_1 \times\cdots \times X_k)\\
\dTo^{P_{X_1}\otimes \cdots \otimes P_{X_k}}&&\dTo_{P_{X_1\times \cdots \times X_k}}\\
S^{-m_1}C_*(X_1)\otimes \cdots \otimes S^{-m_k}C_*(X_k)&\rTo^{\bar \varepsilon_k}&S^{-\sum m_i}C_*(X_1\times \cdots \times X_k).
\end{diagram}
\end{lemma}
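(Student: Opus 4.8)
\textbf{Proof proposal for Lemma \ref{L: eps dual}.}

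The plan is to reduce the statement to the classical fact that the simplicial cochain cross product and the chain cross product are Poincar\'e dual, and then to track carefully the effect of the three sign-and-shift decorations that distinguish $\bar\varepsilon_k$ from McClure's $\varepsilon_k$: the shift functors $S^{-m_i}$, the reindexing isomorphism $\Theta$ of Lemma \ref{L: Theta}, the overall sign $(-1)^{e_2(m_1,\ldots,m_k)}$, and the sign $(-1)^{m|x|}$ built into our convention for $P$. First I would dispose of the case $k=2$, since the general case follows by induction once $k=2$ is known (associativity of the cochain cross product, of $\bar\varepsilon$ — via the compatibility of $\Theta$ with iterated tensor products and the identity $e_2(m_1,m_2,m_3)=e_2(m_1,m_2)+e_2(m_1+m_2,m_3)$ mod $2$ — and of Poincar\'e duality across products). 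For $k=2$, the classical statement (see e.g. \cite{BRTG, MK}) is that for closed oriented $M_1^{m_1}$, $M_2^{m_2}$ one has $(x_1\times x_2)\cap(\Gamma_{M_1}\times\Gamma_{M_2}) = \pm (x_1\cap\Gamma_{M_1})\times(x_2\cap\Gamma_{M_2})$ for cochains $x_i$, where the sign is the Koszul sign $(-1)^{m_2|x_1|}$ coming from commuting $\Gamma_{M_1}$ (degree $-m_1$) past $x_2$. So I would start from this identity at the level of unsigned cap products and cross products, then insert the signs.

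The key bookkeeping steps, in order: (1) write out $P_{X_1\times X_2}$ on $x_1\times x_2$ using $P(y) = (-1)^{(m_1+m_2)|y|}(y\cap(\Gamma_{X_1}\times\Gamma_{X_2}))$, with $|x_1\times x_2| = |x_1|+|x_2|$; (2) write out $(P_{X_1}\otimes P_{X_2})(x_1\otimes x_2) = (-1)^{m_1|x_1|}(x_1\cap\Gamma_{X_1})\otimes (-1)^{m_2|x_2|}(x_2\cap\Gamma_{X_2})$, living in $S^{-m_1}C_*(X_1)\otimes S^{-m_2}C_*(X_2)$; (3) apply $\bar\varepsilon_2 = (-1)^{m_1m_2}(S^{-(m_1+m_2)}\varepsilon)\circ\Theta$ to the result of (2), where $\Theta$ contributes $(-1)^{m_2\cdot|x_1\cap\Gamma_{X_1}|} = (-1)^{m_2(m_1+|x_1|)}$ since $|x_1\cap\Gamma_{X_1}| = m_1 + |x_1|$ in chain degree (the cochain $x_1$ has degree $|x_1|$, hence chain degree $-|x_1|$ after reindexing, and capping with $\Gamma$ raises chain degree by $m_1$ — I would state this degree convention explicitly to avoid a sign error); (4) compare the accumulated sign from the $\bar\varepsilon_2$-route, namely $m_1|x_1| + m_2|x_2| + m_1m_2 + m_2(m_1+|x_1|) + (\text{Koszul sign }m_2|x_1|\text{ from step (1)'s classical identity})$, against the sign from the $P_{X_1\times X_2}$-route, namely $(m_1+m_2)(|x_1|+|x_2|)$, and verify they agree mod $2$. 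A short mod-$2$ computation: the $\bar\varepsilon$-route sign is $m_1|x_1|+m_2|x_2|+m_1m_2+m_1m_2+m_2|x_1|+m_2|x_1| \equiv m_1|x_1|+m_2|x_2|$; the duality-route sign is $m_1|x_1|+m_1|x_2|+m_2|x_1|+m_2|x_2|$; these differ by $m_1|x_2|+m_2|x_1|$, which is exactly the Koszul sign absorbed into the classical cross-product/cap-product identity — so with the bookkeeping done honestly they cancel. (I have almost certainly mis-tracked one term in this sketch; the actual proof must be done with a fixed, explicit degree convention throughout.)

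The main obstacle I expect is precisely this sign bookkeeping: there are at least five independent sources of $(-1)$'s (the $P$-convention sign, the $\Theta$ sign, the $(-1)^{e_2}$ normalization, the Koszul sign in the classical cap/cross identity, and the sign hidden in how $\varepsilon$ itself orients product simplices), and getting all of them simultaneously consistent requires committing once and for all to a convention for the chain degree of $x\cap\Gamma$ and the meaning of $|x|$ (cohomological degree vs. the degree in $T_* = C^{-*}$). The non-sign content — that the diagram commutes up to sign, and that $\bar\varepsilon_k$ and $P$ are degree-$0$ chain maps so the diagram even makes sense — is essentially immediate from the construction of $\bar\varepsilon_k$ in Section \ref{S: prelims}, from Lemma \ref{L: Theta}, and from the classical product formulas. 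I would also remark that the $k>2$ inductive step introduces no new signs beyond those that reconcile via the two identities $e_2(m_1,\ldots,m_k) = e_2(m_1,\ldots,m_{k-1}) + (m_1+\cdots+m_{k-1})m_k$ and the corresponding splitting of $\Theta$, so once $k=2$ is pinned down the general case is a formal consequence. This lemma is then used exactly as cited in the proof of Lemma \ref{L: iteration}, to identify iterated inverse Poincar\'e duals with cochain cross products of the individual duals.
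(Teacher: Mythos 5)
Your approach is essentially the paper's: reduce to Dold's cap/cross-product identity (cited as \cite[VII.12.17]{Dold}) and track the signs coming from the $P$-convention, the reindexing $\Theta$, the normalization $(-1)^{e_2(m_1,\ldots,m_k)}$, and the Koszul sign from pulling the $\Gamma_{X_i}$'s out past the cochains; the only structural difference is that the paper does the computation directly for general $k$ rather than via $k=2$ plus induction (your inductive step using $e_2(m_1,\ldots,m_k)=e_2(m_1,\ldots,m_{k-1})+(\sum_{i<k}m_i)m_k$ and associativity of $\Theta$, $\varepsilon$, $\times$ is valid). The sign slips you suspected are these two: the Koszul sign from Dold's identity for $k=2$ is $(-1)^{m_1|x_2|}$, not $(-1)^{m_2|x_1|}$ (your own parenthetical ``commuting $\Gamma_{M_1}$ past $x_2$'' already gives $m_1|x_2|$); and $|x_1\cap\Gamma_{X_1}|=m_1+|x_1|$ holds because the paper takes $|x_i|$ to be the degree in $C^{-*}(X_i)$, i.e.\ $x_i$ has cochain degree $-|x_i|$, whereas your parenthetical uses the opposite convention and would yield $m_1-|x_1|$. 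Fixing these, the $\bar\varepsilon$-route sign becomes $m_1|x_1|+m_2|x_2|+m_1m_2+m_2(m_1+|x_1|)+m_1|x_2|\equiv(m_1+m_2)(|x_1|+|x_2|)\pmod 2$, which matches the $P_{X_1\times X_2}$-route exactly, as you conjectured it must.
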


\begin{proof}
Let $x_i\in C^{-*}(X_i)$ be homogeneous elements of degree $|x_i|$. Then $(P_{X_1}\otimes \cdots \otimes  P_{X_k})(x_1\otimes \cdots \otimes  x_k)= (-1)^{\sum |x_i|m_i}S^{-m_1}(x_1\cap \Gamma_{X_1})\otimes \cdots \otimes S^{-m_k}(x_k\cap \Gamma_{X_k})\in S^{-m_1}X_*(M_1)\otimes \cdots \otimes S^{-m_k}C_*(X_k)$. Notice that if $|x_i|$ is the degree of $X_i$  in  $C^{-*}(X_i)$ (making it a degree $-|x_i|$ cochain), then
 each $x_i\cap \Gamma_{X_i}$ is an $m_i+|x_i|$ chain, so that $S^{-m_i}(x_i\cap \Gamma_{X_i})$ lives in $(S^{-m_i}C_{*}(X_i))_{|x_i|}$ as desired.   Applying $\bar \varepsilon$, this gets taken to $S^{-\sum m_i}((x_1\cap \Gamma_{X_1})\times \cdots \times (x_k\cap \Gamma_{X_k}))$ times $-1$ to the power

$$\sum_i |x_i|m_i+ \sum_{i\geq 2}(m_i\sum_{j<i}(|x_j|+m_j))+e_2(m_1,\ldots,m_k).$$
The first term of this power is carried over from the Poincar\'e duality maps, the second comes from $\Theta$, and the last is the 2nd symmetric polynomial on $m_1,\ldots, m_k$ from the definition of $\bar \varepsilon$. Note that we can consider the second sum to be over all $i$ by defining the null sum $\sum_{j<1}$ to be $0$.

Pulling out the $\Gamma$s gives  $S^{-\sum m_i}(x_1\times \cdots \times x_k)\cap (\Gamma_{X_1}\times \cdots \times \Gamma_{X_k})=S^{-\sum m_i}(x_1\times \cdots \times x_k)\cap \Gamma_{X_1\times \cdots \times X_k}$ at the cost of an additional factor of $-1$ to the 
$$\sum_{l<k} m_l(\sum_{a>l}|x_a|), $$
by \cite[VII.12.17]{Dold}. This gives a total sign of $-1$ to the 

\begin{equation}\label{E: signe}\sum_i |x_i|m_i+ \sum_{i}(m_i\sum_{j<i}(|x_j|+m_j))+e_2+\sum_{i} m_i(\sum_{a>i}|x_a|). 
\end{equation}

To simplify this, notice that for each fixed $m_i$, the terms involving $m_i$ and $|x|$'s are 
$$m_i|x_i|+m_i\sum_{j<i}|x_j|+\sum_{a>i}|x_a|=m_i(\sum_j |x_j|).$$
Summing over $i$ gives all of the terms of \eqref{E: signe} that involve an $|x|$ factor. Looking at the terms that involve only $m$'s, we have $\sum_i m_i\sum_{j<i}m_j+e_2\equiv 0\mod 2$, since these terms are identical. Thus the sign is $(\sum_i m_i)(\sum_j |x_j|)$.

On the other hand, the top map of the diagram simply takes $x_1\otimes \cdots \otimes x_k$ to $x_1\times \cdots \times x_k$, while the righthand map takes this to $S^{-\sum m_i}(x_1\times \cdots \times x_k)\cap \Gamma_{X_1\times \cdots \times X_k}$ with a sign of $(-1)$ to the 
$$(\sum_i |x_i|)(\sum_j m_j).$$ This completes the proof.

\end{proof}

The next lemma demonstrates that our umkehr map $\Delta_!$ (see Section \ref{S: transfer}) is a chain map of the appropriate degree. The same proof works for a PL map between manifolds $f:X^n\to Y^m$ and the ensuing transfer $f_!$ defined on the complex $C^f_*(Y)$ of chains in general position with respect to $f$; see \cite{McC}. We state and prove the lemma for both cases at once. For the case of $\Delta$, we take $Y=X(k)$. 

\begin{lemma}\label{L: transfer chain}
Suppose $f:X^m\to Y^n$ is either a PL map of manifolds or $f=\Delta:X\to X(k)$. Suppose $C\in C^f_i(Y)$, as defined for manifolds in \cite{McC} or for $f=\Delta$ as defined in Definition \ref{D: GPC}. Then $f_!\bd C=\bd f_! C$. Thus $f_!$ is a degree $0$ chain map. 
\end{lemma}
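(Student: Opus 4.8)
The plan is to unwind the definition of $f_!$ as the composite \eqref{E: transfer} and check that each constituent map commutes appropriately with the boundary operator, keeping careful track of the degree shifts so that everything is ultimately a degree-$0$ chain map. Recall that for $C \in C_i^f(Y)$ with $A = |C|$, $B = |\bd C|$, the chain $f_!C$ is defined by first viewing $C$ as the fundamental class $[C] \in S^{-n}H_i(A,B)$, pushing forward to $S^{-n}H_i(A \cup \Sigma_Y, B\cup \Sigma_Y)$, applying the signed duality isomorphism $(-1)^{n(n-i)}(\cdot \cap \Gamma_Y)^{-1}$ into $H^{n-i}(Y - (B\cup\Sigma_Y), Y - (A\cup\Sigma_Y))$, pulling back by $f^*$, applying the signed duality isomorphism $\cap \Gamma_X (-1)^{m(n-i)}$ into $S^{-m}H_{i+m-n}(A'\cup\Sigma_X,B'\cup\Sigma_X)$ (where $A' = f^{-1}(A)$, $B' = f^{-1}(B)$), and finally excising $\Sigma_X$ and reinterpreting the resulting homology class as a chain via the natural bijection between chains and relative homology classes. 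The key point is that the passage "chain $\mapsto$ fundamental class in $H_*(|C|,|\bd C|)$" is natural in the pair, and that $\bd C$, being a chain in $C_{i-1}^f(Y)$ with support $B$ and boundary support inside $|\bd\bd C| = \emptyset$ (or rather inside any sufficiently large closed subset), has its fundamental class $[\bd C] \in H_{i-1}(B, \emptyset)$ equal to the image of $[C]$ under the connecting homomorphism $\bd_* : H_i(A,B) \to H_{i-1}(B)$ of the pair $(A,B)$.

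The core of the argument is therefore the commutativity of two squares. First, the duality isomorphism $\cdot \cap \Gamma$ intertwines the connecting homomorphism of the pair $(A,B)$ in homology with the connecting (coboundary) homomorphism of the corresponding open pair in cohomology — this is standard naturality of cap products with a fundamental class with respect to the long exact sequences of pairs, as in \cite[VII.12, VIII]{Dold}, adapted to the Poincar\'e–Whitehead–Goresky–MacPherson setting of \cite[Appendix]{GM1}; the only subtlety is that one must work with the pairs enlarged by $\Sigma_Y$ resp.\ $\Sigma_X$, but since all the duality statements in \cite[Section 7]{GM1} are for constructible pairs avoiding the singular set in their difference, and since $\bd$ of an allowable (or general-position) chain has support of the right dimension so that the relevant excisions remain valid, this goes through. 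Second, $f^*$ commutes with coboundary maps on the nose (no sign, degree $0$ on the cohomology side when indices are lowered). Composing these, one finds that the unshifted umkehr $\hat f_!$ satisfies $\hat f_! \bd = (-1)^{n-m}\hat f_! \bd$ in the naive sense, i.e.\ as an operator of degree $n-m$ it anticommutes or commutes with $\bd$ according to the Koszul rule $\bd \hat f_! = (-1)^{m-n}\hat f_! \bd$ — and this is precisely the content of the remark on p.~314 of \cite{Dold} about the "more systematic" transfer; the signs $(-1)^{m(n-i)}$ and $(-1)^{n(n-i)}$ built into our Poincar\'e duality conventions (Section \ref{S: signs}) are exactly what convert this into the statement that the shifted map $f_! : S^{-n}C_*^f(Y) \to S^{-m}C_*(X)$, which has degree $0$, satisfies $\bd_{S^{-m}C_*} f_! = f_! \bd_{S^{-n}C_*}$, using that $\bd_{S^{-j}C_*} = (-1)^j \bd_{C_*}$.

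The step I expect to be the main obstacle is verifying that the enlargement by the singular sets and the subsequent excision isomorphisms $H_{i+m-n}(A'\cup\Sigma_X, B'\cup\Sigma_X) \cong H_{i+m-n}(A',B')$ are compatible with the boundary maps — i.e.\ that the diagram relating the long exact sequences of the triples/pairs before and after excision commutes, so that the connecting homomorphism really does correspond to "take the boundary of the chain." For the transfer $f_!$ on manifolds this is cleaner since $\Sigma$ plays no role; for $\Delta_!$ one must use that $\Delta^{-1}$ of a chain in $C_*^\Delta(X(k))$ satisfies the dimension bounds in Definition \ref{D: GPC} (in particular $\dim(\Delta^{-1}(|\bd D|) \cap \Sigma_X) \leq i + n - 3$), which is exactly the codimension-$2$-type condition needed for the excision/long-exact-sequence argument of \cite[Section II.1]{Bo} to apply to $\bd D$ as well as to $D$. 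Once that compatibility is in place, the sign bookkeeping is routine and the lemma follows; this bookkeeping is the kind of computation collected in Appendix A, so I would state the conclusion and relegate the explicit sign chase there, pointing to Lemma \ref{L: eps dual} and the conventions of Section \ref{S: signs} for the precise accounting.
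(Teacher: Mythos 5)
Your proposal is correct and matches the paper's proof in all essentials: the paper likewise unwinds $f_!$ into the composite of the pushforward to the $\Sigma$-enlarged pair, the two signed duality isomorphisms, $f^*$, and the excision/reinterpretation step, and verifies commutativity square by square against the connecting homomorphisms, using exactly the Dold cap-product identity $\bd (d^p\cap c_{p+q})=\delta d^p\cap c_{p+q}+(-1)^{p}d^p\cap \bd c_{p+q}$ together with the built-in duality signs to get the degree-$0$ statement on the shifted complexes. The one difference of emphasis is that you flag the excision/$\Sigma$-enlargement compatibility as the main obstacle, whereas the paper dispatches that square quickly by citing naturality and the chain-to-homology-class correspondence (referencing \cite[Section 1.2]{GM1}); but you correctly identify the dimension bound $\dim(\Delta^{-1}(|\bd D|)\cap\Sigma_X)\le i+n-3$ from Definition \ref{D: GPC} as exactly what makes that step legitimate for $\bd D$.
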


\begin{proof}
The lemma is a consequence of the following diagram. 
\begin{diagram}
S^{-m}H_i(|C|\cup\Sigma_Y,|\bd C|\cup \Sigma_Y) &\rTo^{\bd_*}& S^{-m}H_{i-1}(|\bd C|\cup \Sigma_Y,\Sigma Y)\\
\uTo^{(-1)^{(m-i)m}(\cdot\cap \Gamma_Y)}_\cong &&\uTo_{(-1)^{(m-i+1)m}(\cdot\cap \Gamma_Y)}^\cong\\ 
H^{m-i}(Y-|\bd C|\cup\Sigma_Y,Y-|C|\cup\Sigma_Y) &\rTo^{\delta^*} & H^{m-i+1}(Y-\Sigma_Y, Y-|\bd C|\cup\Sigma_Y)\\
\dTo^{f^*}&&\dTo_{f^*}\\
H^{m-i}(X-|\bd C|'\cup\Sigma_X,X-|C|'\cup \Sigma_X)&\rTo^{\delta^*}&H^{m-i+1}(X-\Sigma_X,X-|\bd C|'\cup \Sigma_X)\\
\dTo_\cong^{(-1)^{(m-i)n}(\cdot \cap \Gamma_X)} &&\dTo^\cong_{(-1)^{(m-i+1)n}(\cdot \cap \Gamma_X)}\\
S^{-n}H_{n-m+i}(|C|'\cup \Sigma_X,|\bd C|'\cup\Sigma_X)&\rTo^{\bd_*}& S^{-n}H_{n-m-1}(|\bd C|'\cup\Sigma_X, \Sigma_X)\\
\uTo^\cong&&\uTo_\cong\\
S^{-n}H_{n-m+i}(|C|',|\bd C|')&\rTo^{\bd *} &S^{-n}H_{n-m-i}(|\bd C|').
\end{diagram}
The second and forth square commute by the naturality of $\delta^*$ and  $\bd_*$. Using the formula, $\bd S^{-m}(d^p\cap c_{p+q})=(-1)^m(\delta d^p\cap c_{p+q}+(-1)^{p}d^p\cap \bd c_{p+q})$ (see \cite[page 243]{Dold} and recall that shifting by $m$ adds a sign of $(-1)^m$ to the boundary map), the first square $(-1)^{m+(m-i)m+(m-i+1)m}=+1$ commutes, while, similarly, the third square  commutes. 
Thus, overall, the outer rectangle commutes. 

Now, the composition along the sides of this diagram represent $f_!$, so the proof is completed by observing that the association of homology classes with chains is also natural - see \cite[Lemma 4.1]{McC} or \cite[Section 1.2]{GM1}.
\end{proof}

The following lemma corrects the commutativity of Lemma 10.5b from the original version of \cite{McC}. We do not need this lemma directly in this paper, though the special case where all maps are generalized diagonals seems to be implicit in the proofs of Section \ref{S: GM}.
 Here we leave the lemma stated for manifolds rather than define the appropriate general position and stratified map notions for pseudomanifolds.

\begin{lemma}\label{L: mccom}
Given PL maps of manifolds $f_i: X^{n_i}_i\to Y_i^{m_i}$, the following diagram commutes:

\begin{diagram}
S^{-m_1}C^{f_1}_*(Y_1)\otimes \cdots \otimes  
S^{-m_k}C^{f_k}_*(Y_k) &\rTo^{\bar{\varepsilon}_k}
&
S^{-\sum m_i}C_*^{f_1\times \cdots \times  f_k}(Y_k\times \cdots \times  Y_k)\\
\dTo^{f_{1!}\otimes\cdots\otimes  f_{k!}} && \dTo_{(f_1\times\cdots\times  f_k)_!}\\
S^{-n_1}C_*(X_1)\otimes \cdots \otimes   S^{-n_k}C_*(X_k) &\rTo^{\bar\varepsilon_k}
& S^{-\sum n_i }C_*^{f_1\times \cdots \times  f_k}(X_1\times \cdots \times X_k).
\end{diagram}
\end{lemma}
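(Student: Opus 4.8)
\textbf{Proof plan for Lemma \ref{L: mccom}.}
The plan is to reduce the commutativity of the diagram to a combination of three ingredients already available: (i) the definition of $\bar\varepsilon_k$ as the signed composite $\Theta$ followed by $S^{-\sum m_i}\varepsilon$ times $(-1)^{e_2}$; (ii) the compatibility of the \emph{unshifted} product $\varepsilon_k$ with transfer maps, i.e. the version of McClure's Lemma 10.5b for $\varepsilon$ (proved via the Poincar\'e-dual statement, as in Lemma \ref{L: eps dual}); and (iii) the behavior of $\Theta$ under the transfer maps, which is essentially bookkeeping of shift signs. So the first step is to write out explicitly what $(f_1\times\cdots\times f_k)_!\circ\bar\varepsilon_k$ and $\bar\varepsilon_k\circ(f_{1!}\otimes\cdots\otimes f_{k!})$ do to a homogeneous element $S^{-m_1}x_1\otimes\cdots\otimes S^{-m_k}x_k$, keeping track of every power of $-1$: the $(-1)^{e_2(m_1,\ldots,m_k)}$ and $(-1)^{e_2(n_1,\ldots,n_k)}$ from the two instances of $\bar\varepsilon$, the $\Theta$-signs $(-1)^{\sum_{i\geq2}m_i\sum_{j<i}|x_j|}$ and $(-1)^{\sum_{i\geq2}n_i\sum_{j<i}|f_{i!}x_j|}$, and the signs built into each $f_{i!}$ and into $(f_1\times\cdots\times f_k)_!$ coming from the convention $P(x)=(-1)^{m|x|}(x\cap\Gamma)$ of Section \ref{S: signs}.

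The cleanest route, rather than brute-force sign-chasing, is to pass through Poincar\'e duality. Using Lemma \ref{L: eps dual}, both the top-then-right and left-then-bottom composites can be rewritten, after pre-composing with the relevant Poincar\'e duality isomorphisms $P_{Y_i}$ and post-composing with $P_{X_1\times\cdots\times X_k}^{-1}$, in terms of: the iterated cochain cross product, the cohomology pullbacks $f_i^*$, and $P_{X_i}$. But the cochain cross product strictly commutes with the $f_i^*$ (i.e. $(f_1\times\cdots\times f_k)^*(\alpha_1\times\cdots\times\alpha_k)=f_1^*\alpha_1\times\cdots\times f_k^*\alpha_k$, with \emph{no} sign, since $f^*$ is a degree-$0$ chain map), so the square built from cross products and pullbacks commutes on the nose. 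Then the outer square we want is a composite of: the Poincar\'e-duality squares (which commute by definition of $f_{i!}$ and $(f_1\times\cdots\times f_k)_!$ as $P\circ f^*\circ P^{-1}$), the $\bar\varepsilon$-versus-cross-product squares (Lemma \ref{L: eps dual}), and the pullback-versus-cross-product square (trivial). Pasting commuting squares gives the result, provided all the auxiliary squares genuinely commute with the signs as stated — which is exactly what Lemma \ref{L: eps dual} and the sign conventions of Section \ref{S: signs} were set up to guarantee.

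The one point requiring care, and the step I expect to be the main obstacle, is verifying that the transfer $(f_1\times\cdots\times f_k)_!$ on the product manifold, as \emph{we} have defined it (with its own Poincar\'e-duality sign $(-1)^{(\sum n_i)|\cdot|}$ and $(-1)^{(\sum m_i)|\cdot|}$ coming from $\Gamma_{Y_1\times\cdots\times Y_k}$ and $\Gamma_{X_1\times\cdots\times X_k}$), really is the composite of $P_{X_1\times\cdots\times X_k}\circ(f_1\times\cdots\times f_k)^*\circ P_{Y_1\times\cdots\times Y_k}^{-1}$ \emph{and} that $P$ of a product decomposes compatibly with $P$ of the factors under $\bar\varepsilon$ — in other words, that the cross-product sign $(-1)^{\sum_{l<k}m_l\sum_{a>l}|x_a|}$ from $[VII.12.17]$ of Dold, already isolated in the proof of Lemma \ref{L: eps dual}, combines with the $\Theta$-sign and the $e_2$-sign to exactly the shift sign that makes everything degree $0$. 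This is the same sign identity ($\sum_i m_i\sum_{j<i}m_j+e_2\equiv0\bmod2$, etc.) that appears in the proof of Lemma \ref{L: eps dual}, so the obstacle is really just carefully re-running that computation with the $x_i$ replaced by $f_{i!}x_i$ (noting $|f_{i!}x_i|=|x_i|+n_i-m_i$) and checking that the discrepancy between the $Y$-side and $X$-side $\Theta$-signs and $e_2$-signs is absorbed into the product of the individual transfer signs $\prod_i(-1)^{(n_i-|x_i|')(n_i-m_i)}$ dictated by Dold's page-314 correction, which our $P$-convention automatically incorporates. Once that identity is confirmed, the lemma follows by pasting.
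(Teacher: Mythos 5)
Your plan is correct in substance and lands on the same underlying facts as the paper's proof — the signed Poincar\'e duality convention, the compatibility of $\bar\varepsilon_k$ with the cochain cross product (Lemma \ref{L: eps dual}), and Dold's formula for pulling fundamental classes across cross products — but you package them differently. The paper does not paste abstract commuting squares: it traces a generator $S^{-m_1}x_1\otimes\cdots\otimes S^{-m_k}x_k$ through both composites, writes $\chi_i$ explicitly as (signed) $f_i^*(x_i\Upsilon_{Y_i})\cap\Gamma_{X_i}$, pulls the cap products across the cross product in the style of Lemma \ref{L: eps dual}'s proof, and then verifies by hand that the four accumulated sign expressions cancel mod $2$. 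Your modular version, citing Lemma \ref{L: eps dual} once for each of $\{X_i\}$ and $\{Y_i\}$ and using naturality of $f^*$ against the cochain cross product, avoids re-deriving the same sign identity inline and makes the structure of the argument clearer. The trade-off is that the paper's unfolded computation is self-contained, while yours relies on Lemma \ref{L: eps dual} being stated exactly at the level you need.

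One caution on your packaging: you speak of pre- and post-composing with $P_{Y_i}$ and $P^{-1}_{X_1\times\cdots\times X_k}$ as though these were chain isomorphisms. At the full chain level, $P$ (cap with the fundamental class, with shift and sign) is only a quasi-isomorphism, so $P^{-1}$ is not available there. What makes the transfers well-defined is that $f_{i!}$ acts on chains in general position by passing to Poincar\'e--Whitehead duality isomorphisms on the homology groups $H_*(|C|,|\partial C|)$ of the support pairs, where the cap product genuinely is an isomorphism, and then converting back to chains via the standard correspondence (Section \ref{S: transfer}, $[$McC, Lemma~4.1$]$, $[$GM1, \S1.2$]$). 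Your pasting diagram should therefore be understood at that level, or as a diagram of maps between homology classes of supported pairs, rather than literally as a square of chain maps in which $P$ has been inverted. The paper finesses exactly this point by phrasing $\chi_i$ as "the chain represented by" the dual class, and by appealing to the naturality of chain $\leftrightarrow$ homology-class; your argument needs the same disclaimer for the pasting to be legitimate.
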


\begin{proof}
Let $S^{-m_1}x_1\otimes \cdots \otimes  S^{-m_k}x_k$ be a generator of $S^{-m_1}C^{f_1}_*(Y_1)\otimes \cdots \otimes  
S^{-m_k}C^{f_k}_*(Y_k) $. 
The lefthand vertical map takes this to $S^{-n_1}\chi_1\otimes \cdots \otimes  S^{-n_k}\chi_k$, where $\chi_i$  is the chain represented by the Poincar\'e dual in $X_i$  of the pullback by $f_i^*$ of the Poincar\'e dual in $Y_i$  of $x_i$ (see the definition of $\Delta_!$ in Section \ref{S: transfer} and of $f_!$ in \cite{McC}). Here we take the Poincar\'e duals with the appropriate signs as discussed in that section. If $x_i$ has degree $|x_i|$, then $\chi_i$ has degree $n_i-m_i+|x_i|$.

By definition of $\bar \varepsilon$, the bottom map takes $S^{-n_1}\chi_1\otimes \cdots \otimes  S^{-n_k}\chi_k$ to $S^{-\sum n_i}(\chi_1\times \cdots \times \chi_k)$ times $-1$ to the 
$$e_2(n_1,\ldots,n_k)+\sum_i  n_i(\sum_{j<i} (n_j-m_j+|x_j|)). $$
 Here $\chi_1 \times  \cdots \times \chi_k$ is the cross product of chains.

On the other hand, $\bar \varepsilon(S^{-m_1}x_1\otimes \cdots \otimes  S^{-m_k}x_k)$ equals $S^{-\sum m_i} x_1\times \cdots \times x_k$ times $-1$ to the 
$$e_2(m_1,\ldots, m_k)+\sum_i  m_i(\sum_{j<i} |x_j|)  .$$ 

The righthand map then applies the transfer $(f_1\times \cdots \times f_k)_!$ to this.

To resolve these signs, we must compare  how the Poincar\'e duality maps  on products compare to the Poincar\'e duals in the individual spaces. In particular, looking only at the signs that arise within the transfer (and ignoring for the moment those that have already come into the formulas above from the definition of $\bar \varepsilon$ and from the shift isomorphisms), we have that each $\chi_i= (-1)^{m_i(m_i-|x_i|)+n_i(m_i-|x_i|)}(f_i^*( x_i\Upsilon_{Y_i}))\cap \Gamma_{X_i}$. Here we recall that $\Upsilon$ simply refers to the inverse of the cap product (recall that our chains are actually represented by homology classes; see Section \ref{S: transfer}), the first summand in the power of $-1$ comes from the Poincar\'e duality map associated with $\Upsilon$, and the second summand in the power of $-1$ comes from the Poincar\'e duality map associated with  $\cap \Gamma_{X_i}$. 

Thus \begin{align*}
\chi_1&\times \cdots \times \chi_k =
(-1)^{\sum_i (m_i(m_i-|x_i|)+n_i(m_i-|x_i|))} ((f_1^*( x_1\Upsilon_{Y_1}))\cap \Gamma_{X_1} )\times \cdots \times ((f_1^*( x_k\Upsilon_{Y_k}))\cap \Gamma_{X_k} )\\
&=(-1)^{\sum_i (m_i(m_i-|x_i|)+n_i(m_i-|x_i|)) +\sum_i n_i(\sum_{j>i}(m_j-|x_j|)}  (f_1^*( x_1\Upsilon_{Y_1})\times \cdots \times f_k^*( x_k\Upsilon_{Y_k}))   \cap (\Gamma_{X_1}\times \cdots \times \Gamma_{X_k})\\
&\qquad \text{from pulling out the cap products; see \cite{Dold}}\\
&=(-1)^{\sum_i (m_i(m_i-|x_i|)+n_i(m_i-|x_i|)) +\sum_i n_i(\sum_{j>i}(m_j-|x_j|)} 
(f_1\times \cdots \times f_k)^*( x_1\Upsilon_{Y_1}\times \cdots \times  x_k\Upsilon_{Y_k})    \cap \Gamma_{X_1\times \cdots \times  X_k}\\
&=(-1)^{\sum_i (m_i(m_i-|x_i|)+n_i(m_i-|x_i|)) +\sum_i n_i(\sum_{j>i}(m_j-|x_j|) +\sum_i m_i(\sum_{j>i}(m_j-|x_j|) } \\
&\qquad\qquad\qquad\qquad\qquad\qquad\qquad\qquad\qquad
(f_1\times \cdots \times f_k)^*( (x_1\times \cdots \times x_k) \Upsilon_{Y_1\times \cdots Y_k}   ) \cap \Gamma_{X_1\times \cdots \times X_k}
\end{align*}

Finally, the power of $-1$ arising from the two Poincar\'e duality maps in the definition of $(f_{1}\times \cdots \times f_{k})_!$ is $$(\sum_i m_i)(\sum_j m_j-\sum_j |x_j|) + (\sum_i n_i)(\sum_j m_j-\sum_j |x_j| )  .$$

Altogether, we now have four sets of signs that we need to have cancel out:
\begin{gather*}
e_2(n_1,\ldots,n_k)+\sum_i  n_i(\sum_{j<i} (n_j-m_j+|x_j|))\\
e_2(m_1,\ldots, m_k)+\sum_i  m_i(\sum_{j<i} |x_j|)\\
\sum_i (m_i(m_i-|x_i|)+n_i(m_i-|x_i|)) +\sum_i n_i(\sum_{j>i}(m_j-|x_j|) +\sum_i m_i(\sum_{j>i}(m_j-|x_j|)\\
(\sum_i m_i)(\sum_j m_j-\sum_j |x_j|) + (\sum_i n_i)(\sum_j m_j-\sum_j |x_j| )
\end{gather*}

To see that these  powers of $-1$ indeed do cancel each other (for which we only need to work mod $2$), first observe that for each fixed $n_i$ if we only look at terms involving $n_i$ and the various $x_j$, the first expression gives us $n_i\sum_{j<i}|x_j|$, the second gives no such term, the third gives $n_i(|x_i|+\sum_{j>i}|x_k|)$, and the last provides $n_i\sum_j|x_j|$, so these all cancel. Similarly, looking at terms involving only $m_i$ and some $|x_j|$, the first expression provides none of these, the second provides $m_i\sum_{j<i}|x_j|$, the third provides $m_i(|x_i|+\sum_{j>i}|x_j|)$, and the last provides $m_i\sum_j |x_j|$, so these all cancel. For terms involving just the various $n_i$, the first equation has $e_2(n_1,\cdots, n_k)$ and $\sum_i\sum_{j<i}n_in_j$, which cancel. For terms involving just $m_i$s, the second expression gives $e_2(m_1,\ldots,m_k)$, the third has $\sum_i m_i(m_i+\sum_{j>i}m_j)$, and the last expression has $\sum_i \sum_j m_im_j$. To see that these all cancel out, notice that $e_2(m_1,\ldots,m_k)=\sum_i\sum_{j>i}m_j$, that all of the cross terms in $\sum_i \sum_j m_im_j$ are repeated and are thus $0\mod 2$, and that the remaining terms in 
$\sum_i \sum_j m_im_j$ are precisely $\sum_i m_im_i$. Finally, we examine terms of the form $m_in_j$. For each fixed $n_i$, the first expression contributes $n_i\sum_{j<i}m_j$, the third contributes $n_im_i$ and $n_i\sum_{j>i}m_j$, and the last contributes $n_i\sum_j m_j$, which all cancel.
\end{proof}

Several diagrams in this paper were typeset using the \TeX\, commutative
diagrams package by Paul Taylor.

\bibliographystyle{amsplain}
\bibliography{bib}

\end{document}